\documentclass[11pt]{article}
\usepackage[T1]{fontenc}
\usepackage{color}
\usepackage[affil-it]{authblk}
\usepackage{amssymb}
\usepackage{amsmath,mathtools}
\usepackage{amsfonts}
\usepackage{amsthm}
\usepackage{dsfont}
\usepackage{amssymb}
\usepackage{mathpazo}
\usepackage{fullpage}
\usepackage{enumerate}
\usepackage{comment}
\usepackage[cp1251]{inputenc}
\usepackage[colorlinks=true,linkcolor=blue,citecolor=blue,urlcolor=blue]{hyperref}

\usepackage{titlesec}
\titleformat{\section}[hang]{\Large\bfseries\filright}{\thesection.}{.5em}{}
\titleformat{\subsection}[hang]{\large\bfseries\filright}{}{0em}{}
\titleformat{\subsubsection}[block]{\bfseries}{}{0em}{}

\newtheorem*{theorem-non}{Theorem}

\newtheorem{theorem}{Theorem}
\numberwithin{theorem}{section}
\newtheorem{definition}[theorem]{Definition}
\newtheorem{prop}[theorem]{Proposition}
\newtheorem{lemma}[theorem]{Lemma}
\newtheorem{remark}[theorem]{Remark}
\newtheorem{corollary}[theorem]{Corollary}

\newtheorem{fact}[theorem]{Fact}

\newcommand{\tr}{\operatorname{tr}}

\title{Separable neighbourhood of identity in C$^*$-algebras}
\author [1, 2]{Mizanur Rahaman}
\author [3]{Mateusz Wasilewski}
\affil [1]{Wallenberg Centre for Quantum Technology, Chalmers University of Technology}
\affil [2]{Department of Mathematical Sciences, Chalmers University of Technology}
\affil [3] {Institute of Mathematics of the Polish Academy of Sciences}
\date{\today}

\begin{document}
\maketitle

\begin{abstract}
We study the structure of separable elements in bipartite C$^*$-algebras, focusing on the existence and size of a separable neighbourhood around the identity element. While this phenomenon is well understood in the finite-dimensional setting, its extension to general C$^*$-algebras presents additional challenges. We show that the problem of determining such a neighbourhood can be reduced to estimating the completely bounded norm of contractive positive maps. This approach allows us to characterize the size of such neighbourhoods in terms of structural properties of the algebra, notably its rank. As a consequence, we also resolve a recent conjecture of Musat and R\o{}rdam.
\end{abstract}
\section{Introduction} 

Understanding various aspects of separability and entanglement is a central theme in quantum information theory. In finite dimensional systems investigating the structure of separable states is still an active area of research (\cite{winter}, \cite{Chen}). In particular, the geometry of separable states around the identity (maximally mixed state) has been well studied and plays an important role in entanglement detection and robustness questions. However, exploring this phenomenon beyond matrix algebras to general C$^*$-algebras is both natural and technically subtle. More concretely, it is known (\cite{Gurvits-Barnum}) that in the finite dimensional setting, there is a nontrivial neighbourhood of the bipartite identity element consisting entirely of separable elements. In other words, sufficiently small perturbations of the identity cannot generate entanglement. The main question is then, what happens in an arbitrary C$^*$-algebra setting?

In this paper we undertake this task, namely, given two C$^*$-algebras $A$ and $B$, we explore the problem of finding an optimal neighbourhood of separable elements of the  bipartite identity $1_A\otimes 1_B$. By ``separable elements'' we mean the closure of the set of positive elements $x\in A\otimes_{\min} B$ that admit the decomposition $x=\sum_i a_i\otimes b_i$, where $a_i\in A_{+}, b_i\in B_{+}$. The identity element is the natural reference point of the cone of positive elements, and in matrix algberas it is the dual of the trace functional, so it makes sense to investigate this phenomenon based on the identity element. 

In matrix algebras, separability questions can often be reformulated using positive maps via the well known Horodecki criterion (\cite{MR1421501}). And using this connection, Gurvits and Burnum (\cite{Gurvits-Barnum}) obtained a separable ball of radius $1/n$ of the element $1_{M_n(\mathbb{C})}\otimes 1_{M_{n}(\mathbb{C})}$ (when the operator norm is considered). Their result also captures various other values of the  radius when one chooses various Schatten $p$-norms. 
We will mainly focus on the operator norm as for us this is the right generalization to the arbitrary C$^*$-algebra setting.  The guiding observation of our present work is that the problem of finding the size of the separable neighbourhood can be encoded in computing the completely bounded (cb for short) norm problem of positive maps. This fact was implicit in the work of Ando (\cite{Ando}) who although did not consider cb norms of positive maps, his estimates were primarily about cb norms. We spell this connection very clearly in our first result on the matrix algebra case (see Theorem \ref{thm:gammaandetacomputation}).

There has been a long effort in understanding the theory of entanglement and separability in operator algebras pioneered by St\o{}rmer (\cite{Stormer}), Arveson (\cite{arveson}), Paulsen (\cite{paulsen-book}), and other researchers (\cite{Paul-Tod-Tom, kribs}). In particular the Horodecki criterion has been generalized by St\o{}rmer (\cite{stormer-II}) to the setting where one of the algebras is nuclear and the other is an UHF algebra. Later Miller and Olkiewicz (\cite{MillerOlkiewicz}) extended this to nuclear C$^*$ algebras. Note that these generalizations were done to characterize "separable states" as opposed to ``separable elements" in a C$^*$-algebra and there is a subtle difference in these two concepts if one goes beyond the matrix algebra case. Since we are dealing with separable elements, in Section \ref{sec: Horodecki} we establish the Horodecki criterion for for separable elements in nuclear C$^*$-algebras. Having this at our disposal, we identify the problem of finding the separable neighbourhood of the bipartite identity element $1_A\otimes 1_B$ is equivalent  to the problem of computing how large the cb norm of a positive map  can be from the algebra $A$ to $B$. Now computing cb norms of completely bounded maps is a fundamental problem in operator algebras and underpins many interesting concepts of the underlying spaces (\cite{paulsen-book, Smithcb, MR723470}). In particular, the rank of a C$^*$-algebra, i.e. the largest dimension of an irreducible representation, plays a crucial role in determining how large the cb norm of a  
linear map acting on it can be. We have our main result as follows
\begin{theorem-non}
For two unital C$^*$-algebras $A, B$, 
let \[\gamma(A, B) := \sup\{r \in \mathbb{R}_{+}: \mathds{1}_A \otimes \mathds{1}_B - x \text{ is separable for all } x=x^{\ast}\text{ with }\|x\|\leqslant r\}.\]
If both have infinite rank, then $\gamma(A, B)=0$. 

\noindent In the case where at least one has a finite rank we have 

 \[\frac{1}{\gamma(A,B)} =\eta(A, B)= \min(rank(A), rank(B)) ,\]

where   $\eta(A, B) :=\sup\{\|\Phi\|_{cb}: \Phi: A\rightarrow B,  \ \text{contractive positive map}\}.$
\end{theorem-non}

Note that the statement and the proof of the  above theorem (see Theorem \ref{thm: neighbrhood-norm relation}) not only encompasses the known finite dimensional result, it also shows that the separable neighbourhood problem has no non-trivial solution when both the algebras in a bipartite system is $B(H)$, where $H$ is a separable infinite dimensional Hilbert space, as the rank is infinite. 
This fact should be compared with the result of Clifton and Halvorson (\cite{clifton}) where they showed that entangled states are dense in the trace norm topology when one subsystem of the bipartite system is infinite dimensional. 

Although our primary focus is on the separable neighbourhood problem, the methods we develop have further consequences. In particular, they enable us to resolve a conjecture raised by Magdalena Musat and Mikael R\o{}rdam in their recent work on entanglement theory in C$^*$-algebras \cite{MusatRordam}.

\section{Preliminaries}
We will study (completely) positive maps between $C^{\ast}$-algebras.
\begin{definition}
Let $\Phi: A \to B$ be a linear map between $C^{\ast}$-algebras. It is called positive if $\Phi(a^{\ast}a)\geqslant 0$ for all $a\in A$. It is called completely positive if for all $n \in \mathbb{N}$ the amplification $\Phi_n:= Id_n \otimes \Phi: M_n \otimes A \to M_n \otimes B$ is positive.   
\end{definition}
A good overview of completely positive maps can be found in \cite{paulsen-book} and \cite{BrownOzawa}. An important thing to note is that either the domain or the codomain is commutative then positive maps are automatically completely positive. There are also useful criteria for complete positivity if either the domain or the codomain is the matrix algebra.
\begin{lemma}[{\cite[Chapter 1: Proposition 5.11, Proposition 5.13]{BrownOzawa}}]\label{lem:Choimatrix}
Let $A$ be a $C^{\ast}$-algebra. The linear map $\Phi: M_n \to A$ is completely positive if and only if its \emph{Choi matrix} $Choi(A):= \sum_{i,j=1}^{n} e_{ij} \otimes \Phi(e_{ij}) \in M_{n} \otimes A$ is positive.

The linear map $\Psi: A \to M_n$ is completely positive if and only if the functional $\widehat{\Psi}: M_n\otimes A \to \mathbb{C}$ is positive, where $\widehat{\Psi}(\sum_{i,j} e_{ij}\otimes a_{ij}) = \sum_{i,j=1}^{n} \Psi(a_{ij})_{ij}$.
\end{lemma}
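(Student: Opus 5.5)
We prove the two statements separately. In both, the idea is to pass between $\Phi$ (resp. $\Psi$) and the positive element (resp. functional) by testing against the standard matrix units, using only that conjugation by a rectangular matrix preserves positivity. Throughout, we regard $A$ as faithfully represented on a Hilbert space, so that $M_k(A)\subseteq B(\mathbb{C}^k\otimes H)$ and positivity is positivity of operators.

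\emph{First statement.} Suppose $\Phi$ is completely positive. Observe that $E:=\sum_{i,j} e_{ij}\otimes e_{ij}\in M_n\otimes M_n$ equals $\xi\xi^{\ast}$ with $\xi=\sum_i e_i\otimes e_i$, so $E\geqslant 0$. Since $\Phi_n(E)=Choi(\Phi)$, the Choi matrix is positive.

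Conversely, suppose $C:=Choi(\Phi)\geqslant 0$ in $M_n(A)$. Let $D=(d_{kl})=C^{1/2}\in M_n(A)$, which is self-adjoint. Then
\[
\Phi(e_{ij}) = C_{ij}=\sum_k d_{ki}^{\ast}d_{kj}.
\]
For each $k$ let $R_k\in M_{n,1}(A)$ be the column $(d_{k1},\dots,d_{kn})^{T}$, and define
\[
\Phi'(x)=\sum_k R_k^{\ast}(x\otimes 1)R_k=\sum_k\sum_{i,j} d_{ki}^{\ast}x_{ij}d_{kj}.
\]
Evaluating at $e_{ij}$ gives $\Phi'=\Phi$ on the matrix units, hence $\Phi'=\Phi$ by linearity. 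Each map $x\mapsto R^{\ast}(x\otimes 1)R$ is completely positive. Indeed, its $m$-th amplification is
\[
[x_{pq}]\mapsto (1_m\otimes R)^{\ast}[x_{pq}\otimes 1](1_m\otimes R),
\]
which is a conjugation and so sends positive elements to positive elements. Hence $\Phi$ is completely positive.

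\emph{Second statement.} Suppose $\Psi$ is completely positive and let $\xi=\sum_i e_i\otimes e_i\in\mathbb{C}^n\otimes\mathbb{C}^n$. A direct computation shows
\[
\widehat{\Psi}(X)=\langle \Psi_n(X)\xi,\xi\rangle,
\]
because $\Psi_n(\sum e_{ij}\otimes a_{ij})=\sum e_{ij}\otimes\Psi(a_{ij})$ and pairing with $\xi$ picks out the $(i,j)$ entry of the $(i,j)$ block. Since $\Psi_n(X)\geqslant 0$ whenever $X\geqslant 0$, the functional $\widehat{\Psi}$ is positive.

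Conversely, suppose $\widehat{\Psi}$ is positive, and fix $a=[a_{pq}]\in M_m(A)_+$ together with a vector $\zeta=(\zeta_{pi})\in\mathbb{C}^m\otimes\mathbb{C}^n$. We must show
\[
\langle\Psi_m(a)\zeta,\zeta\rangle=\sum_{p,q,i,j}\bar\zeta_{pi}\zeta_{qj}\Psi(a_{pq})_{ij}\geqslant 0.
\]
Regard $\zeta$ as a scalar matrix $Z\in M_{m,n}(\mathbb{C})$ with $Z_{qj}=\zeta_{qj}$, and set $Y=Z^{\ast}aZ\in M_n(A)$. Then $Y\geqslant 0$, and its entries are
\[
Y_{ij}=\sum_{p,q}\bar\zeta_{pi}a_{pq}\zeta_{qj}.
\]
Hence the displayed sum equals $\widehat{\Psi}(Y)\geqslant 0$, and $\Psi$ is completely positive.

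No step is a serious obstacle. The only point needing care is the converse of the first statement, where we must produce an explicit completely positive form for $\Phi$; taking the square root of the Choi matrix settles it, and no Stinespring theorem is needed.
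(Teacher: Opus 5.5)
Your proof is correct, and both directions of each equivalence check out, including $C_{ij}=\sum_k d_{ki}^{\ast}d_{kj}$ from $C=D^{\ast}D$ and the identity $\langle\Psi_m(a)\zeta,\zeta\rangle=\widehat{\Psi}(Z^{\ast}aZ)$. The paper gives no proof of its own and only cites Brown--Ozawa; your argument is the standard textbook proof of these facts: the square root of the Choi matrix writes $\Phi$ as a sum of compressions $x\mapsto R_k^{\ast}(x\otimes 1)R_k$, and for $\Psi$ you use $\widehat{\Psi}(X)=\langle\Psi_n(X)\xi,\xi\rangle$ together with compression by a scalar matrix.
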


Another important concept will be that of a completely bounded map, i.e. a map $\Phi: A \to B$ such that the amplifications $\Phi_n: M_n(A) \to M_n(B)$ are uniformly bounded, i.e. $\sup_{n\in\mathbb{N}} \|\Phi_n\| < \infty$; this quantity will be called the cb norm of $\Phi$. Similarly to the case of positivity, bounded maps into commutative $C^{\ast}$-algebras are automatically completely bounded, but it is not the case that bounded maps with commutative domains are automatically completely bounded.

We will mostly study the maps between $C^{\ast}$-algebras by going to the induced maps between biduals, their universal enveloping von Neumann algebras. The reason for utility of this construction stems from the fact that it is easier to study maps between von Neumann algebras, and in our specific setting we will mostly have to deal with particularly nice von Neumann algebras. A lot of information on this topic can be found in \cite[Chapter III.2]{Takesaki}; a basic introduction can also be found in \cite[Chapter 1.4]{BrownOzawa}.

We will sometimes use a certain approximation property: nuclearity for $C^{\ast}$-algebras and injectivity for von Neumann algebras. An important relationship between the two is that a $C^{\ast}$-algebra is nuclear if and only if its bidual $A^{\ast\ast}$ is injective. A lot of information about these properties can be found in the book \cite{BrownOzawa}.

\section{The matrix algebra case}\label{sec:matrix}
In this section we will clarify the relationship between bounds on the cb norms of positive maps and the size of the neighbourhood consisting of separable elements. 
\begin{definition}
A bipartite element $X\in (M_n\otimes M_m)_{+}$ is called \emph{separable} if it admits a decomposition $X=\sum_i P_i\otimes Q_i$, where $0 \leq P_i \in M_n$ and $0 \leq Q_i\in M_m$. This is a closed convex subset of $M_{n} \otimes M_m$ and it will be denoted by $Sep(M_n\otimes M_{m})$.
\end{definition}

\begin{definition}\label{def:etaandgamma}
We define the following quantities:
\begin{itemize}
    \item $\eta(M_n, M_m) :=\sup\{\|\Phi\|_{cb}: \Phi: M_n\rightarrow M_m,  \ \text{contractive positive map}\}$.
\item  $\eta_{u}(M_n, M_m) :=\sup\{\|\Phi\|_{cb}: \Phi: M_n\rightarrow M_m,  \ \text{unital positive map}\}$.
\item $\gamma(M_{n}, M_{m}) := \sup\{r \in \mathbb{R}_{+}: \mathds{1}_n \otimes \mathds{1}_m - x \text{ is separable for all } x=x^{\ast}\text{ with }\|x\|\leqslant r\}$.
\end{itemize}
\end{definition}
First note that in order for the element $ \mathds{1}_n \otimes \mathds{1}_m - x$ to be even positive, we must have that $x$ is a contraction and hence the $r$ we look for in the definition of $\gamma(M_{n}, M_{m})$, must have the property that $r\leq 1$. 
The quantity $\gamma(M_n, M_m)$ tells us the radius of the largest ball around the unit in $M_n \otimes M_m$ consisting only of separable elements. To relate the parameters, we need to recall the following criterion for separability, known as the \emph{Horodeckis criterion}.
\begin{prop}[\cite{MR1421501}]\label{prop:Horodeckicrit}
Let $x \in (M_{n} \otimes M_{m})_{+}$. Then $x$ is separable if and only if $(Id\otimes \phi)(x) \geqslant 0$ for all positive maps $\phi: M_{n} \to M_{m}$. It is also equivalent to $(\psi \otimes Id)(x) \geqslant 0$ for all positive maps $\psi: M_{m} \to M_{n}$. It is sufficient to only look at unital positive maps $\phi$.
\end{prop}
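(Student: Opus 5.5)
The forward direction is immediate. If $x=\sum_i P_i\otimes Q_i$ with $P_i,Q_i\geqslant 0$, then applying a positive map to one tensor factor gives a sum of tensor products of positive matrices, which is positive. The same holds for limits, since positivity is a closed condition. The real content is the converse, and I would prove it by a Hahn--Banach separation argument combined with the Choi--Jamio{\l}kowski correspondence, in the spirit of Lemma~\ref{lem:Choimatrix}. Throughout, the positive map is applied to whichever tensor factor matches its domain. The two formulations in the statement are exchanged by the flip $M_n\otimes M_m\cong M_m\otimes M_n$, which preserves separability, so it suffices to treat maps acting on one fixed leg.

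First I would check that $Sep(M_n\otimes M_m)$ is a closed convex cone in the real Hilbert space $(M_n\otimes M_m)_{sa}$ with the pairing $\langle W,x\rangle=\tr(Wx)$. It is the cone generated by the compact set of rank-one products $|\xi\rangle\langle\xi|\otimes|\eta\rangle\langle\eta|$ with unit vectors $\xi,\eta$. By Carath\'eodory's theorem every element is a sum of at most $(nm)^2$ such products. The cone generated by a compact set not containing $0$ is closed, so $Sep$ is closed.

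Now suppose $x\geqslant 0$ is not separable. Separation of a point from a closed convex cone gives a self-adjoint $W$ with $\tr(Wx)<0$ and $\tr(Ws)\geqslant 0$ for all $s\in Sep$. Testing on product vectors shows that $W$ is \emph{block-positive}:
\[
\langle \xi\otimes\eta, W\,\xi\otimes\eta\rangle\geqslant 0 \quad\text{for all } \xi,\eta .
\]
Let $\Omega=\sum_{i}e_i\otimes e_i\in\mathbb{C}^n\otimes\mathbb{C}^n$. Write $W=(Id_n\otimes\phi)(|\Omega\rangle\langle\Omega|)$, i.e.\ $W$ is the Choi matrix of the linear map $\phi:M_n\to M_m$, $\phi(e_{ij})=W_{ij}$. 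The standard computation
\[
\langle \xi\otimes\eta,W\,\xi\otimes\eta\rangle=\langle\eta,\phi(|\bar\xi\rangle\langle\bar\xi|)\eta\rangle
\]
shows that block-positivity of $W$ is exactly positivity of $\phi$. Let $\phi^{*}:M_m\to M_n$ be the trace-dual of $\phi$; it is again positive. Unwinding the trace pairing gives
\[
\tr(Wx)=\langle\Omega,(Id_n\otimes\phi^{*})(x)\,\Omega\rangle .
\]
This is negative, so $(Id\otimes\phi^{*})(x)\not\geqslant 0$ for the positive map $\phi^{*}$, which is the contrapositive of the converse. I expect the main source of errors to be this index bookkeeping (transposes and which leg is dualised) rather than any conceptual difficulty. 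I would verify the identity on elementary tensors $x=e_{ij}\otimes e_{kl}$.

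Finally, for the reduction to unital maps, let $\psi$ be positive with $(Id\otimes\psi)(x)\not\geqslant 0$. For $\varepsilon>0$ set $\psi_\varepsilon=\psi+\varepsilon\,\tr(\cdot)\mathds{1}$, so that $\psi_\varepsilon(\mathds{1})$ is invertible. The map $\tilde\psi_\varepsilon=\psi_\varepsilon(\mathds{1})^{-1/2}\psi_\varepsilon(\cdot)\,\psi_\varepsilon(\mathds{1})^{-1/2}$ is unital and positive. Moreover $(Id\otimes\tilde\psi_\varepsilon)(x)$ is a congruence of $(Id\otimes\psi_\varepsilon)(x)$ by the invertible matrix $\mathds{1}\otimes\psi_\varepsilon(\mathds{1})^{-1/2}$, so it is positive if and only if $(Id\otimes\psi_\varepsilon)(x)$ is. As $\varepsilon\to 0$, $(Id\otimes\psi_\varepsilon)(x)\to(Id\otimes\psi)(x)$, and the set of non-positive matrices is open, so for small $\varepsilon$ the unital map $\tilde\psi_\varepsilon$ already detects that $x$ is not separable.
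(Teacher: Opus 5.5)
Your proof is correct and is essentially the paper's own argument: the paper only cites Horodecki for this proposition, but Lemma~\ref{lem:findimhorodecki} proves the nontrivial direction by the same Hahn--Banach separation, writing the separating functional directly as $\widehat{\phi}(a\otimes b)=\tr(a^{T}\phi(b))$ for a map acting on the second leg (your witness $W$ expressed through the trace-dual $\phi^{*}$), and the unital reduction in Section~\ref{sec: Horodecki} is exactly your $\varepsilon$-perturbation followed by $\phi(\mathds{1})^{-1/2}\phi(\cdot)\phi(\mathds{1})^{-1/2}$. One small correction: a cone generated by a compact set $K$ with $0\notin K$ need not be closed in general (you need $0\notin\operatorname{conv}(K)$), but here every generator has trace one, so your closedness argument for $Sep(M_n\otimes M_m)$ still goes through.
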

\begin{remark}
Unlike the infinite dimensional situation discussed in Section \ref{sec: Horodecki}, there is a nice symmetry between the legs of the bipartite element $x$ here. The reason is that in the infinite dimensional setting we will need to add an extra assumption about one of the $C^{\ast}$-algebras, which breaks the symmetry.
\end{remark}
Before we state the relationship between $\gamma(M_{n}, M_{m})$ and $\eta(M_{n}, M_{m})$, let us note that the arguments play a symmetric role for $\gamma(M_{n}, M_{m})$, while there is a certain asymmetry in $\eta(M_{n}, M_{m})$ between the domain and the codomain; this will be reflected in the arguments. We first handle the domain.
\begin{prop}\label{prop:onesidedsepvscb}
Let $r>0$ and $n\in \mathbb{N}$. The following conditions are equivalent:
\begin{enumerate}[{\normalfont (i)}]
\item\label{item:one} $\gamma(M_n, M_{m}) \geqslant r$ for all $m\in \mathbb{N}$;
\item\label{item:two} $\eta(M_{n}, M_{m}) \leqslant \frac{1}{r}$ for all $m\in \mathbb{N}$;
\item\label{item:three} $\eta_{u}(M_{n}, M_{m}) \leqslant \frac{1}{r}$ for all $m \in \mathbb{N}$.
\end{enumerate}
\end{prop}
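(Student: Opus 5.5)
The plan is to prove the cycle (\ref{item:one})$\Rightarrow$(\ref{item:two})$\Rightarrow$(\ref{item:three})$\Rightarrow$(\ref{item:one}). Throughout, I regard $\mathds{1}_n\otimes\mathds{1}_m - x$ as an element of $M_n\otimes M_m$. Positive maps $\Phi\colon M_n\to M_m$ act on the first leg via $\Phi\otimes Id_m$.

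\emph{(\ref{item:two})$\Rightarrow$(\ref{item:three}).} This is immediate. A unital positive map $\Phi$ satisfies $\|\Phi\|=\|\Phi(\mathds{1})\|=1$ (Russo--Dye), so it is contractive. Hence $\eta_u(M_n,M_m)\leqslant\eta(M_n,M_m)$.

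\emph{(\ref{item:three})$\Rightarrow$(\ref{item:one}).} Fix $m$ and a self-adjoint $x\in M_n\otimes M_m$ with $\|x\|\leqslant r$. I will use the Horodecki criterion, Proposition \ref{prop:Horodeckicrit}, in the form where one tests only unital positive maps $\phi\colon M_n\to M_m$ applied to the $M_n$ leg. For such $\phi$ we have
\[(\phi\otimes Id_m)(\mathds{1}_n\otimes\mathds{1}_m - x) = \mathds{1}_m\otimes\mathds{1}_m - (\phi\otimes Id_m)(x).\]
The element $(\phi\otimes Id_m)(x)$ is self-adjoint, because $\phi$ is $*$-preserving. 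Its norm satisfies $\|(\phi\otimes Id_m)(x)\|\leqslant\|\phi\|_{cb}\,r\leqslant 1$, so the expression above is positive. Taking $x=0$ shows that $\mathds{1}_n\otimes\mathds{1}_m - x$ is positive to begin with, since $\|x\|\leqslant r\leqslant 1$. The criterion then gives separability, hence $\gamma(M_n,M_m)\geqslant r$. The one point to check carefully is that Proposition \ref{prop:Horodeckicrit} is used with the correct orientation: the maps go from the $n$-dimensional leg into $M_m$. That is exactly the version stated, and this is why the proposition ``handles the domain''.

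\emph{(\ref{item:one})$\Rightarrow$(\ref{item:two}).} Let $\Phi\colon M_n\to M_m$ be contractive and positive, and fix $k$. First I reduce the computation of $\|\Phi\otimes Id_k\|$ to self-adjoint inputs. For $y\in M_n\otimes M_k$, set $\tilde y=\begin{pmatrix}0&y\\ y^*&0\end{pmatrix}\in M_n\otimes M_{2k}$. This element is self-adjoint with $\|\tilde y\|=\|y\|$. Since $\Phi$ is $*$-preserving, $(\Phi\otimes Id_{2k})(\tilde y)$ has off-diagonal blocks $(\Phi\otimes Id_k)(y)$ and its adjoint, so the norms agree. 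It therefore suffices to show that $\|(\Phi\otimes Id_{k})(x)\|\leqslant 1$ for every self-adjoint $x\in M_n\otimes M_{k}$ with $\|x\|\leqslant r$, for all $k$.

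By (\ref{item:one}) with $m=k$, both $\mathds{1}\otimes\mathds{1}\mp x$ are separable. Now $\Phi\otimes Id_k$ maps separable elements to positive ones, since it sends each $P\otimes Q$ to $\Phi(P)\otimes Q\geqslant 0$ and is continuous on the closed cone. This gives
\[-\Phi(\mathds{1})\otimes\mathds{1}\leqslant(\Phi\otimes Id_k)(x)\leqslant\Phi(\mathds{1})\otimes\mathds{1}.\]
Because $0\leqslant\Phi(\mathds{1})\leqslant\mathds{1}$ (as $\|\Phi\|\leqslant 1$), the self-adjoint element $(\Phi\otimes Id_k)(x)$ has norm at most $1$. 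Rescaling, $\|\Phi\otimes Id_k\|\leqslant 1/r$ for every $k$, so $\|\Phi\|_{cb}\leqslant 1/r$.

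I expect the main obstacle to be bookkeeping rather than depth. One must get the orientation of the Horodecki criterion right, and one must carry out the self-adjoint reduction via the $2\times 2$ trick, which is what allows the order-theoretic bound to control the full norm of the amplification.
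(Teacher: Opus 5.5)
Your proof is correct and follows the paper's route. Steps (ii)$\Rightarrow$(iii) and (iii)$\Rightarrow$(i) are the same as the paper's, and in (i)$\Rightarrow$(ii) you use the same off-diagonal perturbation of the identity. The only difference is that you read the norm bound off the order interval $-\Phi(\mathds{1})\otimes\mathds{1}\leqslant(\Phi\otimes Id_k)(x)\leqslant\Phi(\mathds{1})\otimes\mathds{1}$, whereas the paper reads it directly off positivity of $\left[\begin{array}{cc} \mathds{1} & r\phi_{k}(x) \\ r\phi_{k}(x)^{\ast} & \mathds{1}\end{array}\right]$.

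One small slip: ``taking $x=0$'' does not justify $r\leqslant 1$. That follows from (iii) because every unital positive map has norm $1$, so $\eta_{u}(M_n,M_m)\geqslant 1$.
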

\begin{proof}
The proof that \eqref{item:one} implies \eqref{item:two} follows along the lines of the proof \cite[Theorem 3.7]{aubrun}, where they established that \eqref{item:one} implies \eqref{item:three}. Indeed, let $\phi: M_n \to M_{m}$ be a positive contraction and let $x \in M_{n} \otimes M_{k}$ be an element of norm $\|x\|\leqslant 1$. We want to show that $\|\phi_{k}(x)\|\leqslant \frac{1}{r}$. Consider the element $y:= \left[\begin{array}{cc} \mathds{1}_{n} \otimes \mathds{1}_{k} & rx \\ rx^{\ast} & \mathds{1}_{n} \otimes \mathds{1}_{k}\end{array}\right] \in M_{n} \otimes M_{2k}$. Define $\tilde{y}:=\left[\begin{array}{cc} 0_n\otimes 0_k & -rx \\ -rx^{\ast} & 0_{n} \otimes 0_{k}\end{array}\right] \in M_{n} \otimes M_{2k}$, then it is self-adjoint and $\|\tilde{y}||\leq r$ and hence the element $y=\left[\begin{array}{cc} \mathds{1}_{n} \otimes \mathds{1}_{k} & 0\\ 0 & \mathds{1}_{n} \otimes \mathds{1}_{k}\end{array}\right]- \tilde{y}$ is separable in $M_{n} \otimes M_{2k}$ by the assumption. Therefore by the Horodeckis criterion $\phi_{2k}(y)\geqslant 0$. But $\phi_{2k}(y) =  \left[\begin{array}{cc} \phi(\mathds{1}_{n}) \otimes \mathds{1}_{k} & r\phi_{k}(x) \\ r(\phi_{k}(x))^{\ast} & \phi(\mathds{1}_{n}) \otimes \mathds{1}_{k}\end{array}\right] $. As $\phi$ is a contraction, $\|\phi(\mathds{1}_{n})\|\leqslant 1$, which implies that $\phi(\mathds{1}_n) \leqslant \mathds{1}_m$, so $\phi_{2k}(y) \leqslant \left[\begin{array}{cc} \mathds{1}_{m} \otimes \mathds{1}_{k} & r\phi_{k}(x) \\ r(\phi_{k}(x))^{\ast} & \mathds{1}_{m} \otimes \mathds{1}_{k}\end{array}\right] $. Such a matrix is positive if and only if $\|r \phi_{k}(x)\|\leqslant 1$, so we showed that $\|\phi_{k}\| \leqslant \frac{1}{r}$, hence $\|\phi\|_{cb} \leqslant \frac{1}{r}$ as $k$ was arbitrary. 

The implication from \eqref{item:two} to \eqref{item:three} is clear, so we need to show that \eqref{item:three} implies \eqref{item:one}. Let $x=x^{\ast} \in M_{n} \otimes M_{m}$ be of norm $r$; we want to show that $\mathds{1}_{n} \otimes \mathds{1}_{m} -x$ is separable. We will employ the Horodeckis criterion (Proposition \ref{prop:Horodeckicrit}). Let $\phi: M_{n} \to M_{m}$ be a unital positive map. We have that $\|\phi_{m}(x)\| \leqslant \frac{1}{r} \|x\| = 1$. It follows that $\phi_{m}(\mathds{1}_{n}\otimes \mathds{1}_{m} - x) = \mathds{1}_{n} \otimes \mathds{1}_{m} - \phi_{m}(x) \geqslant 0$. As it happens for all unital positive maps, we conclude that $\mathds{1}_{n} \otimes \mathds{1}_{m} -x$ is separable, therefore $\gamma(M_{n}, M_{m}) \geqslant r$.
\end{proof}

We can now give the precise values of these quantities.
\begin{theorem}\label{thm:gammaandetacomputation}
We have $\gamma(M_n, M_{m}) = \frac{1}{\min(n,m)}$ and $\eta(M_n, M_m) = \eta_{u}(M_n, M_m) = \min(n,m)$.
\end{theorem}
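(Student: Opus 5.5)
The plan is to prove the two bounds $\eta_u(M_n,M_m)\geqslant \min(n,m)$ and $\eta(M_n,M_m)\leqslant \min(n,m)$. Since $\eta_u\leqslant\eta$ trivially, this gives $\eta=\eta_u=\min(n,m)$. The value of $\gamma$ is then obtained from the Horodeckis criterion (Proposition \ref{prop:Horodeckicrit}) by a version of Proposition \ref{prop:onesidedsepvscb} applied to a fixed pair $(n,m)$ rather than to all $m$.

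\emph{Lower bound.} Let $k=\min(n,m)$ and let $T$ be the transpose on $M_k$; it is well known that $\|T\|_{cb}=k$. Choose a rank-$k$ projection $P\in M_n$ and an isometry $V:\mathbb{C}^k\to\mathbb{C}^m$. Define
\[
\Phi(a)=V\,T(PaP)\,V^{\ast}+\omega(a)(\mathds{1}_m-VV^{\ast}),
\]
where $\omega$ is some state on $M_n$. This map is positive and unital. Compressing $\Phi_k$ by $V^{\ast}\cdot V$ and restricting it to the corner $P M_n P\cong M_k$ recovers $T_k$, so $\|\Phi\|_{cb}\geqslant k$.

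\emph{Upper bound.} Let $\Phi:M_n\to M_m$ be a positive contraction; then $\Phi(\mathds{1}_n)\leqslant \mathds{1}_m$. I would prove $\|\Phi\|_{cb}\leqslant m$ and $\|\Phi\|_{cb}\leqslant n$ separately.

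For the codomain bound I would use Smith's lemma, $\|\Phi\|_{cb}=\|\Phi_m\|$ for maps into $M_m$. Write $M_m$ as the row sum of its $m$ columns, so that $\Phi=\sum_{j}\Phi(\cdot)e_{jj}$. Each piece $a\mapsto \Phi(a)e_{jj}$ maps into the column space $C_m$. The aim is to bound the cb norm of each piece using positivity: the Cauchy--Schwarz type inequality $\Phi(a)^{\ast}\Phi(a)\leqslant \|a\|\,\Phi(|a|)$-style estimates for positive maps on self-adjoint parts. After the usual factor-$2$ loss for splitting into self-adjoint parts is removed, this should give a bound summing to $m$.

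For the domain bound I would use the decomposition $x=\sum_{i,j}e_{ij}\otimes x_{ij}$ and the dual picture. By Lemma \ref{lem:Choimatrix}, applied to the adjoint map $\Phi^{\ast}$ on trace-class duals, the domain bound is equivalent to the codomain bound for $\Phi^{\ast}:M_m\to M_n$, which is positive and trace-norm contractive. Alternatively, the domain bound follows from the symmetry of $\gamma$ in its two arguments, once the $\gamma$ statement is established in both orders.

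\emph{From $\eta$ to $\gamma$.} Next I would rerun the proof of Proposition \ref{prop:onesidedsepvscb} for a single pair. Direction \eqref{item:three}$\Rightarrow$\eqref{item:one} only uses unital positive maps $M_n\to M_m$ and the amplification $\phi_m$. Hence $\eta_u(M_n,M_m)\leqslant k$ gives $\gamma(M_n,M_m)\geqslant 1/k$. Combined with the symmetry $\gamma(M_n,M_m)=\gamma(M_m,M_n)$, which comes from the two-sided form of the Horodeckis criterion, this shows that a codomain bound alone already suffices for $\gamma$. Here I would use the bound $\leqslant m$ when $m\leqslant n$ and, after swapping the roles of the factors, the bound $\leqslant n$ otherwise.

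For $\gamma\leqslant 1/k$ I would take the positive unital $\Phi$ constructed above. I would pick a self-adjoint $x\in M_n\otimes M_k$, of flip or swap type supported on $P\otimes$ the relevant corner, with $\|\Phi_k(x)\|=k\|x\|$. Then $\mathds{1}-rx$ fails the Horodeckis test as soon as $r>1/k$. Alternatively, I would use the implication \eqref{item:one}$\Rightarrow$\eqref{item:two} for the fixed pair, with $M_{2k}$ padding handled by the same corner trick.

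\emph{Main obstacle.} The step I expect to be hardest is getting the sharp constant $\min(n,m)$, with no factor $2$, in the upper bound on $\|\Phi\|_{cb}$ for positive contractions. For this I would first try the Smith-lemma/column argument on the codomain side, and then transfer it to the domain side via the symmetry of $\gamma$.
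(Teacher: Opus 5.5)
\textbf{Gap: the codomain estimate.} The decisive analytic input, $\|\Phi\|_{cb}\leqslant m\|\Phi\|$ for maps into $M_m$, is never established, and the mechanism you sketch for it cannot work. You propose to bound each column piece $a\mapsto\Phi(a)e_{jj}$ in cb norm by $\|\Phi\|$ and then sum. That bound is false even for unital positive maps. For the transpose $T$ on $M_m$, the piece $a\mapsto T(a)e_{jj}$, restricted to the row $e_{jj}M_m\cong R_m$, is the canonical map $R_m\to C_m$. Concretely, $\sum_i e_{ji}\otimes e_{ij}$ has norm $1$ and is sent to $\sum_i e_{ij}\otimes e_{ij}$, which has norm $\sqrt{m}$. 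So no Kadison--Schwarz type estimate can give per-column bounds adding up to $m$; the triangle inequality yields at best $m^{3/2}$.

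Positivity is in fact beside the point. The bound holds for every linear map into $M_m$, which is what the paper quotes from Paulsen. Here is a direct proof. Take $X=[x_{ij}]$ in the unit ball of $M_r(M_n)$ and unit vectors $\xi=(\xi_i)$, $\zeta=(\zeta_j)$ in $(\mathbb{C}^m)^r$. Put $\alpha_k=(\xi_i(k))_i$ and $\beta_l=(\zeta_j(l))_j$ in $\mathbb{C}^r$. Then $\langle\Phi_r(X)\zeta,\xi\rangle=\sum_{k,l=1}^{m}\langle\Phi((\alpha_k^{\ast}\otimes\mathds{1}_n)X(\beta_l\otimes\mathds{1}_n))e_l,e_k\rangle$. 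Each term has modulus at most $\|\Phi\|\,\|\alpha_k\|\,\|\beta_l\|$. Since $\sum_k\|\alpha_k\|^2=1$, we get $\sum_k\|\alpha_k\|\leqslant\sqrt{m}$, and likewise for the $\beta_l$. This gives $m\|\Phi\|$, with no factor $2$ to remove.

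\textbf{Gap: the duality route.} Your duality route to the domain bound is also wrong. The trace-dual $\Phi^{\ast}$ of an operator-norm contraction is only a trace-norm contraction, and its operator norm is not controlled. For example, $\Phi(a)=\langle ae_1,e_1\rangle\mathds{1}_m$ gives $\Phi^{\ast}(b)=\tr(b)e_{11}$, which has norm $m$. Moreover, $\|\Phi\|_{cb}$ equals the cb norm of $\Phi^{\ast}$ as a map between trace-class spaces with their dual operator space structures. It is not the cb norm of a map $M_m\to M_n$, and Lemma \ref{lem:Choimatrix} plays no role here.

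\textbf{What works once the estimate is supplied.} With the codomain estimate in hand, the rest of your plan is correct and genuinely different from the paper. The paper imports $\gamma(M_n,M_m)\geqslant 1/\min(n,m)$ from Ando's Corollary 8.4. It then deduces $\eta(M_n,M_m)\leqslant n$ via Proposition \ref{prop:onesidedsepvscb}, using operator space theory only for $\eta\leqslant m$.

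You instead derive the separable ball from the codomain estimate itself. The implication (iii)$\Rightarrow$(i) of Proposition \ref{prop:onesidedsepvscb} is valid for a fixed pair $(n,m)$, so $\gamma(M_n,M_m)\geqslant 1/m$. Swapping the legs gives $\gamma(M_n,M_m)\geqslant 1/n$. This makes the theorem independent of Ando. The implication (i)$\Rightarrow$(ii) then gives $\eta(M_n,M_m)\leqslant n$; it needs $\gamma(M_n,M_{2k})\geqslant 1/n$ for every $k$, which your argument provides.

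Your explicit flip-type witness for $\gamma\leqslant 1/k$ is also fine. You only need to choose the sign of $x$ so that the largest eigenvalue of $(\Phi\otimes Id)(x)$, and not merely its norm, equals $k\|x\|$.
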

\begin{proof}
By \cite[Corollary 8.4]{Ando}, we obtain $\gamma(M_{n}, M_{m}) \geqslant \frac{1}{\min(n,m)}$. It then follows from the previous proposition that $\eta(M_n, M_m) \leqslant n$ (as we did not discuss the dependence on the codomain in this proposition). But the inequality $\eta(M_{n}, M_m) \leqslant m$ follows from the general theory of operator spaces, as the cb norm of a map into $M_m$ is achieved at the $m$-th matricial level (see \cite[Exercise 3.10, Exercise 3.11]{paulsen-book}). On the other hand, using the transpose map, we can easily show that $\eta(M_n, M_m) \geqslant \min(n,m)$, so $\eta(M_n, M_m) = \min(n,m)$. It follows that $\gamma(M_{n}, M_{m}) = \frac{1}{\min(n,m)}$. Indeed, if $\gamma(M_{n}, M_{m}) > \frac{1}{\min(n,m)}$ then we can assume that $n \leqslant m$ by the symmetry of $\gamma$. It follows that $\gamma(M_n, M_m) > \frac{1}{n}$, which by Proposition \ref{prop:onesidedsepvscb} would imply that $\eta(M_n, M_m) < n$, which gives a contradiction.

The case of $\eta_{u}$ can be handled in the same way.
\end{proof}

Similar arguments will prove useful in the infinite dimensional case. In the remainder of this section we would like to show that one can get similar estimates for a general contraction from $M_n$ to $M_m$ without any positivity assumptions. We will rely on results about completely positive majorants from \cite{Ando} rather than invoking separable neighbourhoods; it is not possible to extend this part to the infinite dimensional setting.
\begin{definition}[{\cite[Section 5]{Ando}}]
  For a linear map $\Psi: M_m \to M_n$ a pair of  maps $\Phi_1, \Phi_2: M_m \to M_n$ is called a completely positive majorizing pair if the block matrix 
\[
\left[\begin{array}{cc} A & B \\ B^{\ast} & C \end{array}\right]
\]
is positive, where $B$ is the Choi matrix of $\Psi$ and $A$ and $C$ are the respective Choi matrices of $\Phi_1$ and $\Phi_2$.  
\end{definition}

\begin{prop}
Let $\Psi: M_n \to M_m$ be a linear map. Then $\|\Psi\|_{cb} \leqslant \min(m,n) \|\Psi\|$.
\end{prop}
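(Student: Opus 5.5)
The bound $\|\Psi\|_{cb}\leqslant m\|\Psi\|$ is the standard fact already used in the proof of Theorem \ref{thm:gammaandetacomputation}: for maps into $M_m$ the cb norm is attained at the $m$-th matricial level, and $\|\Psi_m\|\leqslant m\|\Psi\|$ (see \cite[Exercise 3.10, 3.11]{paulsen-book}). The substantive part is therefore $\|\Psi\|_{cb}\leqslant n\|\Psi\|$. Following the hint before the Definition, I will use Ando's theory of completely positive majorizing pairs \cite[Section 5]{Ando} rather than separable neighbourhoods.

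\textbf{Step 1.} Recall Haagerup's characterization, which Ando reformulates in terms of Choi matrices. We have $\|\Psi\|_{cb}\leqslant 1$ if and only if there is a completely positive majorizing pair $\Phi_1,\Phi_2$ for $\Psi$ with $\|\Phi_1(\mathds{1})\|\leqslant 1$ and $\|\Phi_2(\mathds{1})\|\leqslant 1$. Equivalently, $\|\Psi\|_{cb}=\min\max(\|\Phi_1(\mathds{1})\|,\|\Phi_2(\mathds{1})\|)$ over all such pairs, using the Wittstock–Haagerup–Paulsen off-diagonal trick. So it suffices to produce, assuming $\|\Psi\|\leqslant1$, a majorizing pair with $\Phi_i(\mathds{1})\leqslant n\mathds{1}$.

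\textbf{Step 2.} Construct the pair. Write $B=\sum_{ij}e_{ij}\otimes\Psi(e_{ij})\in M_n\otimes M_m$. Take $A=\mathds{1}_n\otimes P$ and $C=\mathds{1}_n\otimes Q$ with suitable positive $P,Q\in M_m$. Such $A$ is the Choi matrix of the map $X\mapsto \tr(X)P$, so $\Phi_1(\mathds{1})=nP$. Positivity of $\left[\begin{smallmatrix}A&B\\B^*&C\end{smallmatrix}\right]$ then amounts to $B$ being a contraction from $\mathds{1}\otimes Q$-weighted space to $\mathds{1}\otimes P$-weighted space. The goal is $P,Q\leqslant\mathds{1}_m$. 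Taking $P=Q=\mathds{1}$, I need $\|B\|\leqslant 1$.

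For this, test on vectors. For unit $\xi=\sum_i e_i\otimes\xi_i$ and $\zeta=\sum_j e_j\otimes\zeta_j$ we have $\langle B\zeta,\xi\rangle=\sum_{ij}\langle\Psi(e_{ij})\zeta_j,\xi_i\rangle$. I would write this as $\langle \Psi(T)\ldots\rangle$-type expressions using rank-one decompositions, and bound it by $\|\Psi\|\cdot$ a norm of order $1$. Naively one only gets $\|B\|\leqslant n\|\Psi\|$, which would give $\Phi_i(\mathds{1})\leqslant n^2$, which is too weak.

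\textbf{Step 3 (main obstacle).} Fixing the bound in Step 2 is the crux. Instead of the trace map with identity weight, I would use Ando's duality. The minimal value $\max\|\Phi_i(\mathds{1})\|$ equals the norm of $\Psi$ acting from $S_1^n$-type spaces, and an operator-space norm comparison gives the factor $n$. Concretely, I would invoke Ando's result that the decomposability/cb constants relate to $\sup$ over $x\in M_n\otimes M_k$ and reduce to $k=n$, by the dual of the "attained at level $m$" fact: the cb norm of a map defined on $M_n$ is attained at level $n$. Then $\|\Psi_n\|\leqslant n\|\Psi\|$ follows by writing $x\in M_n(M_n)$ as $\sum_{i}$ of $n$ "diagonals" $x=\sum_{s}x^{(s)}$. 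Each $x^{(s)}$ is a permuted block-diagonal of norm $\leqslant\|x\|$. A block-diagonal matrix with a permutation $\sigma$ satisfies $\|\Psi_n(x^{(s)})\|\leqslant\|\Psi\|\|x^{(s)}\|$, since it equals $(\mathds{1}\otimes\cdots)$ conjugation of a matrix whose entries are $\Psi$ of single blocks placed in distinct rows and columns. Whether this last step really gives $\|\Psi\|$ rather than something larger is where I expect trouble, and Ando's majorizing pair is the tool I would fall back on to certify it.
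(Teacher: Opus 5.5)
You handle the bound $\|\Psi\|_{cb}\leqslant m\|\Psi\|$ correctly. The diagonal estimate in Step 3 is also correct: each piece $x^{(s)}$ is sent to a permuted block-diagonal matrix with blocks $\Psi(x_{i\sigma(i)})$, so $\|\Psi_n\|\leqslant n\|\Psi\|$ holds and the worry you flag there is unfounded. But at level $k$ the same argument only gives $\|\Psi_k\|\leqslant k\|\Psi\|$. So everything rests on your reduction ``the cb norm of a map defined on $M_n$ is attained at level $n$'', and that reduction is false: Smith's lemma has no dual version. Smith's compression uses that vectors in $\mathbb{C}^k\otimes\mathbb{C}^m$ have Schmidt rank at most $m$, which involves only the codomain.

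Here is a counterexample. Let $U_1=\mathds{1}_m$, let $U_2,U_3\in M_m$ be unitaries generating $M_m$ as an algebra, and set $\Psi(x)=\sum_{i=1}^{3}x_{ii}U_i$ on $M_3$. Since $\Psi$ only sees the diagonal, $\|\Psi_k\|$ is the maximum of $\|\sum_i\alpha_i\otimes U_i\|$ over contractions $\alpha_i\in M_k$. By convexity this maximum is attained at unitaries $V_i$. For $k=m$, taking $V_i=\overline{U_i}$ and the vector $\sum_j e_j\otimes e_j$ gives the value $3$. Conversely, suppose $3$ is attained at level $k$, normalized so that $V_1=\mathds{1}$ (replace $V_i$ by $V_1^{\ast}V_i$). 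Then all $V_i\otimes U_i$ fix a common unit vector. Writing it as a $k\times m$ matrix $X$, this says $V_iX=X\overline{U_i}$, so $X^{\ast}X$ commutes with every $\overline{U_i}$. Hence $X^{\ast}X$ is a nonzero scalar, and $k\geqslant\mathrm{rank}\,X=m$. For $m=4$ this gives $\|\Psi_3\|<3\leqslant\|\Psi\|_{cb}$.

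So Step 3 does not bound $\|\Psi\|_{cb}$, and Steps 1--2 only reformulate the problem. By the Haagerup--Paulsen characterization you quote, the whole content is the \emph{existence} of a completely positive majorizing pair with $\|\Phi_j\|\leqslant\min(n,m)\|\Psi\|$. Your choice $A=C=\|B\|\,\mathds{1}\otimes\mathds{1}$ in Step 2 is too weak: already for $\Psi=Id$, where $\|B\|=n$, it gives $\Phi_j(\mathds{1})=n^2\mathds{1}$.

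That existence statement is exactly Ando's Corollary 6.7, and it is the key input of the paper's proof. The paper then checks that $(Id_k\otimes\Phi_1,Id_k\otimes\Phi_2)$ majorizes $Id_k\otimes\Psi$: after flipping two tensor legs, the block Choi matrix becomes the Choi matrix $P_k$ of $Id_k$ tensored with the original positive block matrix. It applies Ando's Theorem 5.2 at every level, obtaining $\|\Psi_k\|\leqslant\sqrt{\|\Phi_1\|\,\|\Phi_2\|}\leqslant\min(n,m)\|\Psi\|$, since $\|\Phi_j\|_{cb}=\|\Phi_j\|$ for completely positive maps. With Ando's corollary, your Step 1 would close the argument at once. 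Without it, or some other genuine argument for the $n$-dependence, your proposal does not prove the statement.
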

\begin{proof}
By \cite[Corollary 6.7]{Ando} $\Psi$ admits a majorizing pair $(\Phi_1, \Phi_2)$ such that $\|\Phi_j\| \leqslant \min(n,m) \|\Psi\|$ for $j \in \{1,2\}$. Hence if $A, C$ are the Choi matrices of $\Phi_1, \Phi_2$ (respectively) and $B$ denotes the Choi matrix of $\Psi$, then we have that the block matrix $
\left[\begin{array}{cc} A & B \\ B^{\ast} & C \end{array}\right]
$ is positive.

We will check that for each $k \in \mathbb{N}$ the pair $(Id_{k}\otimes \Phi_{1}, Id_{k}\otimes \Phi_{2})$ is a majorizing pair for $Id_k\otimes \Psi$. These are maps from $ M_{k} \otimes M_{n} \simeq M_{nk}$ to $M_{k} \otimes M_{m} \simeq M_{mk}$. To compute the Choi matrix of such a map, we will use the matrix units $e_{ijrs} := e_{ij} \otimes f_{rs} $, where the $e_{ij}$ are the standard matrix units in $M_{k}$ and $f_{rs}$ are the standard matrix units in $M_{n}$. Then the Choi matrix of $Id_k \otimes \Psi$ is equal to $\sum_{i,j,r,s} (Id_k \otimes \Psi)(e_{ijrs}) \otimes e_{ijrs}$. We can compute it further to obtain $\sum_{i,j,r,s} (e_{i,j}\otimes \Psi(f_{ij})) \otimes e_{ij} \otimes f_{rs}$; we can calculate the Choi matrices of $Id_k\otimes \Phi_{1}$ and $Id_k\otimes \Phi_{2}$ in the same way. Note that there is a $\ast$-isomorphism $M_{m} \otimes M_{k} \otimes M_{n} \otimes M_{k} \simeq M_{m} \otimes M_{n} \otimes M_{k} \otimes M_{k}$ flipping the second and the third legs of the tensor product. For checking the majorizing condition, we need to check positivity of a certain $2 \times 2$ matrix with entries coming from this algebra, which is definitely a property preserved under the $\ast$-isomorphism. This isomorphism maps the Choi matrix of $Id_k\otimes \Psi$ to $P_k\otimes B$, where $B$ is the Choi matrix of $\Psi$ and $P_k$ is the Choi matrix of $Id_{k}$; of course the same works for $\Phi_1$ and $\Phi_2$. It follows that we need to show positivity of the following matrix 
\[
\left[\begin{array}{cc} P_{k} \otimes A & P_{k} \otimes B  \\ P_{k} \otimes B^{\ast} & P_{k} \otimes C \end{array}\right],
\]
which is just the tensor product of the original matrix $
\left[\begin{array}{cc} A & B \\ B^{\ast} & C \end{array}\right]
$ tensored with $P_k$, so it is positive.

It now follows from \cite[Theorem 5.2]{Ando} that $\|Id_k\otimes \Psi\| \leqslant \sqrt{\|Id_k\otimes \Phi_1\|\cdot \|Id_k \otimes \Phi_2\|}$, hence $\|\Psi\|_{cb} \leqslant \sqrt{\|\Phi_1\|_{cb}\cdot \|\Phi_2\|_{cb}}$. But $\Phi_j$ is completely positive, so $\|\Phi_j\|_{cb} = \|\Phi_j\|$. We conclude that 
\[
\|\Psi\|_{cb} \leqslant \sqrt{\|\Phi_1\|\cdot \|\Phi_2\|} \leqslant \min(n,m) \|\Psi\|.
\]
\end{proof}
We can now state the version of \cite[Theorem 3.7]{aubrun} for general linear maps.
\begin{corollary}
Let $\Phi:M_n \to V$ be a linear map into an operator space $V$. Then $\|\Phi\|_{cb}\leqslant n \|\Phi\|$.
\end{corollary}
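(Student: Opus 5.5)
The plan is to reduce to the preceding proposition by embedding $V$ completely isometrically into some $B(H)$ and then compressing to finite-dimensional matrix algebras. By Ruan's theorem (or by the definition of an operator space as a closed subspace of some $B(H)$), we may regard $V\subseteq B(H)$ with its inherited matrix norms. Then $\|\Phi\|$ and $\|\Phi\|_{cb}$ do not change when $\Phi$ is viewed as a map into $B(H)$. So it suffices to prove $\|\Phi\|_{cb}\leqslant n\|\Phi\|$ for $\Phi: M_n\to B(H)$.

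Fix $k\in\mathbb{N}$ and $x\in M_k(M_n)$ with $\|x\|\leqslant 1$. We want $\|\Phi_k(x)\|\leqslant n\|\Phi\|$. The norm of $\Phi_k(x)\in M_k(B(H))$ is the supremum of $\|(\mathrm{Id}_k\otimes P)\Phi_k(x)(\mathrm{Id}_k\otimes P)\|$ over finite-rank projections $P$ on $H$. This holds because these compressions converge strongly to $\Phi_k(x)$ along the net of finite-rank projections, and the norm is lower semicontinuous in the strong (or weak) operator topology. Fix such a $P$ with $\dim PH=m$ and set $\Psi_P(a):=P\Phi(a)P$. This is a linear map $M_n\to B(PH)\simeq M_m$ with $\|\Psi_P\|\leqslant\|\Phi\|$.

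By the preceding proposition, $\|\Psi_P\|_{cb}\leqslant\min(n,m)\|\Psi_P\|\leqslant n\|\Phi\|$. Since $(\Psi_P)_k(x)=(\mathrm{Id}_k\otimes P)\Phi_k(x)(\mathrm{Id}_k\otimes P)$, the compressed norm is at most $n\|\Phi\|$. Taking the supremum over $P$ and then over $k$ gives $\|\Phi\|_{cb}\leqslant n\|\Phi\|$.

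The only point requiring care is the approximation step. We must check that the compressions by $\mathrm{Id}_k\otimes P$ recover the norm in $M_k(B(H))=B(H^k)$. Here $\mathrm{Id}_k\otimes P$ ranges over an increasing net of projections converging strongly to $1$. For any unit vectors $\xi,\zeta\in H^k$ we have $\langle T\xi,\zeta\rangle=\lim\langle (\mathrm{Id}_k\otimes P)T(\mathrm{Id}_k\otimes P)\xi,\zeta\rangle$. This gives the needed inequality $\|T\|\leqslant\sup_P\|(\mathrm{Id}_k\otimes P)T(\mathrm{Id}_k\otimes P)\|$; the reverse inequality is trivial. Everything else is a direct application of the previous proposition with the bound $\min(n,m)\leqslant n$, which is uniform in $m$. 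That uniformity is exactly what makes the infinite-dimensional codomain harmless.
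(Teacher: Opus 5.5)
Your proposal is correct and follows the same route as the paper: embed $V$ completely isometrically in $B(H)$, compress to finite-rank corners $P B(H) P\simeq M_m$, apply the preceding proposition, and use that the bound $\min(n,m)\leqslant n$ does not depend on $m$. Your treatment of the limit step is more precise than the paper's. You take the net of all finite-rank projections and use weak-operator lower semicontinuity of the norm, whereas the paper asserts norm convergence of the compressions, which in general converge only strongly. The paper also uses a sequence $P_k$, which implicitly assumes $H$ is separable.
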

\begin{proof}
Embed $V$ completely isometrically into some $B(H)$ and consider the composed map $\widetilde{\Phi}: M_n \to B(H)$. It suffices to show that $\|\widetilde{\Phi}\|_{cb} \leqslant n \|\widetilde{\Phi}\|$, as one has $\|\widetilde{\Phi}\|_{cb} = \|\Phi\|_{cb}$ and $\|\widetilde{\Phi}\| = \|\Phi\|$. Let $P_{k}: H \to H$ be an orthogonal projection of rank $k$ and assume that the sequence $P_{k}$ increases to the identity. Then we can approximate $\widetilde{\Phi}$ (and its amplifications) by $P_{k}\widetilde{\Phi} P_{k}$ (and its amplifications). But $P_{k} B(H) P_{k} \simeq M_{k}$, so by the previous proposition we get $\|P_{k} \widetilde{\Phi}P_{k}\|_{cb} \leqslant n \|P_{k}\widetilde{\Phi} P_{k}\|$. Since the domain is finite dimensional, this gives a convergence in norm, therefore $\|\Phi\|_{cb}= \|\widetilde{\Phi}\|_{cb} \leqslant n\|\widetilde{\Phi}\| = n \|\Phi\|$.
\end{proof}

\section{Horodeckis criterion for von Neumann algebras}{\label{sec: Horodecki}}


An important generalization of the Horodeckis criterion appeared in \cite{MillerOlkiewicz}. However their results concern separable states and we work with separable elements in the $C^{\ast}$-algebras; unlike the finite dimensional case, you cannot automatically translate between the two frameworks. Fortunately their proof can be adapted to our setting and we will provide some details for the convenience of the reader. Before that, let us formally define our sets of separable elements, as we will have to distinguish the cases of $C^{\ast}$-algebras and von Neumann algebras.
\begin{definition} 
If $A$ and $B$ are $C^{\ast}$-algebras. We denote by $Sep(A\otimes B)$ the set of separable elements of $A\otimes B$, i.e. the closure of the finite sums of the form $\sum_{i} a_{i} \otimes b_{i}$, where the $a_{i}$ and the $b_{i}$ are positive. 

For von Neumann algebras $M$ and $N$ we will use the notation $Sep_{\sigma}(M\overline{\otimes} N)$ to define an analogous set, this time taking the closure in the weak$^{\ast}$-topology; elements of this set will be called separable, other positive elements will be called entangled. 
\end{definition}
\begin{remark}
In the definition of $Sep(A\otimes B)$ we might as well take the closure in the weak topology, because the set is convex and the closure of a convex set is the same with respect to the weak and the norm topologies, as a consequence of the Hahn-Banach separation theorem. This will be useful later, because the weak topology on $A$ is equal to the topology induced from the weak$^{\ast}$ topology of $A^{\ast\ast}$.
\end{remark}
The main goal of this section is the following theorem, analogous to the main result of \cite{MillerOlkiewicz}.
\begin{theorem}\label{thm:Horodecki}
Let $x \in M\overline{\otimes} N$, where $M$ is an injective von Neumann algebra. Then $x\in Sep_{\sigma}(M\overline{\otimes} N)$ if and only if $(Id\otimes \phi)(x) \in M\overline{\otimes} M$ is positive for all positive, normal, finite rank maps $\phi: N \to M$.
\end{theorem}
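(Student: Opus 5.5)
The plan is to prove the easy implication directly, and to prove the converse by a Hahn--Banach separation argument, using injectivity of $M$ to reduce to the matrix algebra situation of Proposition \ref{prop:Horodeckicrit}, following the strategy of \cite{MillerOlkiewicz}.

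\emph{Necessity.} Let $\phi: N\to M$ be positive, normal and of finite rank. Then we can write $\phi(y)=\sum_{k=1}^{p}\omega_k(y)m_k$ with normal functionals $\omega_k\in N_*$ and $m_k\in M$. Hence $Id\otimes\phi=\sum_k (Id\otimes\omega_k)(\cdot)\otimes m_k$ is a finite sum of normal slice maps followed by tensoring. In particular $Id\otimes\phi$ is weak$^*$-continuous from $M\overline{\otimes}N$ to $M\overline{\otimes}M$. For an elementary separable element $\sum_i a_i\otimes b_i$ with $a_i, b_i\geqslant 0$ we get $(Id\otimes\phi)(\sum_i a_i\otimes b_i)=\sum_i a_i\otimes\phi(b_i)\geqslant 0$. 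The positive cone of $M\overline{\otimes}M$ is weak$^*$-closed, so positivity passes to the weak$^*$-closure $Sep_\sigma(M\overline{\otimes}N)$.

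\emph{Sufficiency.} Suppose $x\notin Sep_\sigma(M\overline{\otimes}N)$; we may assume $x=x^*$, since otherwise $(Id\otimes\phi)(x)$ cannot be positive for every admissible $\phi$ (for instance $\phi=\omega(\cdot)\mathds{1}$ with $\omega$ a normal state). The set $Sep_\sigma$ is a weak$^*$-closed convex cone, so Hahn--Banach separation in the duality between $M\overline{\otimes}N$ and its predual gives a hermitian normal functional $\omega$ with $\omega\geqslant 0$ on $Sep_\sigma$ and $\omega(x)<0$. The condition $\omega(a\otimes b)\geqslant 0$ for $a, b\geqslant 0$ says exactly that $b\mapsto \omega(\cdot\otimes b)$ is a positive map $N\to M_*$. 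To turn this into a map into $M$, we use that injectivity is equivalent to semidiscreteness. This gives normal ucp maps $\alpha_\lambda: M\to M_{k_\lambda}$ and $\beta_\lambda: M_{k_\lambda}\to M$ such that $\beta_\lambda\circ\alpha_\lambda\to Id$ pointwise weak$^*$. Consequently $\omega\big((\beta_\lambda\alpha_\lambda\otimes Id)(x)\big)\to\omega(x)<0$, so for some $\lambda$ the functional $\omega_\lambda:=\omega\circ(\beta_\lambda\otimes Id)$ on $M_{k}\overline{\otimes}N$ is nonnegative on separable elements yet negative on $z:=(\alpha_\lambda\otimes Id)(x)$.

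\emph{Finite-dimensional step.} Write $\theta_{ij}(b):=\omega_\lambda(e_{ij}\otimes b)$ and define $\Phi(b):=[\theta_{ji}(b)]_{ij}\in M_k$, i.e.\ the transposed array. This $\Phi: N\to M_k$ is normal and of finite rank. It is positive, because $\operatorname{tr}(P\,\Phi(b))=\omega_\lambda(P^{T}\otimes b)\geqslant 0$ for every $P\geqslant 0$. Moreover $\omega_\lambda(z)=\tau\big((Id\otimes\Phi)(z)\big)$, where $\tau$ is the positive functional on $M_k\otimes M_k$ given by pairing with the unnormalised maximally entangled vector. So $(Id\otimes\Phi)(z)=(\alpha_\lambda\otimes\Phi)(x)$ is not positive.

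The remaining and, I expect, main obstacle is to produce a witness with values in $M$ rather than in $M_k$. The first attempt is $\phi:=\beta_\lambda\circ\Phi: N\to M$, which is positive, normal and of finite rank. If $(Id\otimes\phi)(x)\geqslant 0$, then applying the cp map $\alpha_\lambda\otimes\alpha_\lambda$ gives $(\alpha_\lambda\otimes\alpha_\lambda\beta_\lambda\Phi)(x)\geqslant 0$. We would then need to replace $\alpha_\lambda\beta_\lambda$ by the identity of $M_k$. I would try to arrange this by choosing the semidiscrete factorisation so that $\alpha_\lambda\circ\beta_\lambda$ is close to $Id_{M_k}$, or by passing to a second approximation index and taking limits in $\tau$. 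If that fails, the fallback is the Miller--Olkiewicz route. There one realises $M\subset B(H)$ with a conditional expectation $E: B(H)\to M$, approximates the $M_*$-valued positive map coming from $\omega$ by finite-rank positive maps into $B(H)$ via finite-rank projections $P_k$ as in the Corollary above, and composes with $E$. The delicate point in either route is the same: the approximating functionals must stay negative on $x$ while the witnessing maps remain positive and normal.
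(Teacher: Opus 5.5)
Your necessity argument, the separation step and the finite-dimensional witness are fine; the last is essentially the paper's Lemma~\ref{lem:findimhorodecki}. There is one index slip: do not transpose. $\Phi(b):=[\theta_{ij}(b)]_{ij}$ is already positive and satisfies $\tau\circ(Id\otimes\Phi)=\omega_\lambda$, whereas your transposed array gives $\omega_\lambda\circ(T\otimes Id)$.

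The genuine gap is the step you flag yourself and leave open. You only produce a witness $\Phi:N\to M_k$ for which $(\alpha_\lambda\otimes\Phi)(x)$ is not positive, but the hypothesis concerns $M$-valued maps, and neither proposed repair works.

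\begin{itemize}
\item Semidiscreteness controls $\beta_\lambda\circ\alpha_\lambda$ but says nothing about $\alpha_\lambda\circ\beta_\lambda$. With full matrix algebras you cannot in general arrange $\alpha_\lambda\circ\beta_\lambda\approx Id_{M_k}$. If $M$ is abelian, $\alpha_\lambda\circ\beta_\lambda$ factors through a commutative algebra, hence is entanglement breaking. Testing on the maximally entangled projection then gives $\|Id_{M_k}-\alpha_\lambda\circ\beta_\lambda\|_{cb}\geqslant 1-\frac{1}{k}$.
\item The fallback through a conditional expectation $E:B(H)\to M$ fails for a similar reason. $(Id\otimes E\psi)(x)=(Id\otimes E)\big((Id\otimes\psi)(x)\big)$ may well be positive although $(Id\otimes\psi)(x)$ is not, so composing with $E$ can destroy the witness.
\end{itemize}

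What is missing is a mechanism that makes finite-dimensional witnesses automatically $M$-valued. The paper gets this from structure theory rather than from semidiscreteness.

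\begin{itemize}
\item For finite, $\sigma$-finite, injective $M$, hyperfiniteness gives an increasing net of finite-dimensional \emph{subalgebras} $M_k\subset M$. The faithful normal trace gives normal conditional expectations $\mathbb{E}_k$ onto them, so ``$\alpha\circ\beta=Id$'' holds exactly.
\item Then $x_k:=(\mathbb{E}_k\otimes Id)(x)$ satisfies $(Id\otimes\phi)(x_k)=(\mathbb{E}_k\otimes Id)\big((Id\otimes\phi)(x)\big)\geqslant 0$ for every positive normal $\phi:N\to M_k\subset M$. Hence $x_k$ is separable by Corollary~\ref{cor:findimhorodecki}, and $x_k\to x$ weak$^\ast$.
\item Such expectations are not available in general. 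The paper therefore passes to finite corners $p_kMp_k$ in the semifinite case and to the semifinite continuous core in the type III case (following Miller--Olkiewicz), and then removes $\sigma$-finiteness.
\end{itemize}

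Your route could also be completed by a separate lemma: the hypothesis forces $(Id_M\otimes\Phi)(x)\geqslant 0$ for every positive normal $\Phi:N\to M_k$.
\begin{itemize}
\item If there is an injective $\ast$-homomorphism $\iota:M_k\to M$, then $Id_M\otimes\iota$ reflects positivity, so $\iota\circ\Phi$ is the required $M$-valued witness.
\item Otherwise $M$ is of type I with all homogeneous summands of degree $<k$. In that case you can compress $\Phi$ by isometries $\mathbb{C}^m\to\mathbb{C}^k$ and use a Schmidt-rank argument.
\end{itemize}
As written, the proposal contains no such argument, so sufficiency is not proved.
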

Note that one direction is clear, so we only have to show that if applying positive maps to one leg of the tensor preserves positivity then the element has to be separable. The crucial first step, just like in \cite{MillerOlkiewicz}, is the case $M=M_d$ and then we will use structural results for von Neumann algebras to conclude the general case.
\begin{lemma}\label{lem:findimhorodecki}
Let $x \in M_d \otimes N$. Then $x\in Sep_{\sigma}(M_d\otimes N)$ if and only if $(Id\otimes \phi)(x) \in M_d\otimes M_d$ is positive for all normal positive maps $\phi: N \to M_d$. 
\end{lemma}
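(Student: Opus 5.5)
The forward direction is immediate. If $x=\sum_i P_i\otimes b_i$ with $P_i\geqslant 0$ in $M_d$ and $b_i\in N_+$, then $(Id\otimes\phi)(x)=\sum_i P_i\otimes\phi(b_i)\geqslant 0$. For a normal $\phi$, the map $Id\otimes\phi$ is weak$^{\ast}$-continuous on $M_d\otimes N$, and the positive cone of $M_d\otimes M_d$ is closed. Hence the property passes to weak$^{\ast}$-limits, that is, to all of $Sep_{\sigma}(M_d\otimes N)$.

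For the converse I will argue by Hahn--Banach separation in the weak$^{\ast}$-topology.

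\textbf{Step 1 (separating functional).} Suppose $x\notin Sep_\sigma(M_d\otimes N)$. This set is a weak$^{\ast}$-closed convex cone. Since $M_d\otimes N$ is the dual of $M_d^{\ast}\otimes N_{\ast}$, there is a normal functional $\omega$ on $M_d\otimes N$ with
\[
\omega(x)<0\leqslant \omega(P\otimes b)\quad\text{for all } P\in (M_d)_+,\ b\in N_+.
\]
I may take $\omega$ hermitian, by replacing it with its real part; this is legitimate because $x$ is self-adjoint, being positive by hypothesis (apply the criterion with a normal positive functional times a fixed rank-one positive).

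\textbf{Step 2 (turning $\omega$ into a map).} Write $\omega(\sum_{i,j} e_{ij}\otimes b_{ij})=\sum_{i,j}\omega_{ij}(b_{ij})$ with $\omega_{ij}\in N_{\ast}$. Define $\phi:N\to M_d$ by $\phi(b)_{ji}=\omega_{ij}(b)$, which is a transpose-type Choi correspondence. This $\phi$ is normal, since each entry is a normal functional. It is positive: for $b\geqslant 0$ and $\xi\in\mathbb{C}^d$ we have $\langle\phi(b)\xi,\xi\rangle=\omega(P_{\bar\xi}\otimes b)\geqslant 0$ for a suitable rank-one $P_{\bar\xi}\geqslant 0$, so positivity on product elements gives positivity of $\phi$.

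\textbf{Step 3 (recovering $\omega$).} Let $\Omega=\sum_{i,j} e_{ij}\otimes e_{ij}\in (M_d\otimes M_d)_+$, the unnormalised maximally entangled projection. Then
\[
\omega(y)=\tr\bigl((Id\otimes\phi)(y)\,\Omega\bigr)
\]
up to the chosen index conventions. Thus $\omega(x)<0$ forces $(Id\otimes\phi)(x)\not\geqslant 0$, contradicting the hypothesis.

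The main obstacle is bookkeeping the transpose in Step 2 so that positivity of $\omega$ on products corresponds exactly to positivity of $\phi$ and the identity in Step 3 holds. I would handle it by checking both on elementary tensors $e_{ij}\otimes b$. A secondary point is that the dual pairing really realises $Sep_\sigma$ as a weak$^{\ast}$-closed set in a dual space, so that the separating functional can be chosen normal; this is standard for von Neumann algebras, since $M_d\otimes N$ is finite-dimensional over $N$.
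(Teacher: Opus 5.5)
Your proposal is correct and follows essentially the paper's argument: Hahn--Banach separation of the weak$^{\ast}$-closed cone, identification of the separating functional with a normal positive map via the Choi-type correspondence (the paper's $\widehat{\phi}(a\otimes b)=Tr(a^{T}\phi(b))$), and recovery of $\omega(x)$ as $(Tr\otimes Tr)\bigl(\Omega\,(Id\otimes\phi)(x)\bigr)$ with $\Omega$ the paper's $P$; you are more careful than the paper about making the functional normal and hermitian. Two small corrections: the convention must be $\phi(b)_{ij}=\omega(e_{ij}\otimes b)$, which is what your $P_{\bar\xi}$ positivity test actually corresponds to (with $\phi(b)_{ji}=\omega(e_{ij}\otimes b)$ as written, the Step 3 identity holds with the non-positive flip $\sum_{k,l}e_{kl}\otimes e_{lk}$ in place of $\Omega$, although composing $\phi$ with the transpose repairs this), and the test maps $b\mapsto\varphi(b)Q$ only show that $x$ is self-adjoint rather than positive, which is all you actually use.
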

\begin{proof}
We will use the correspondence between maps $\phi: N \to M_d$ and functionals $\widehat{\phi}$ on $M_d \otimes N$, given by $\widehat{\phi}(\sum_{i} a_{i} \otimes b_{i}) := Tr(\sum_{i} a_{i}^{T} \phi(b_{i}))$ (see Lemma \ref{lem:Choimatrix}). This replaces the use of the Choi-Jamio{\l}kowski isomorphism in our dual setting. It is clear that $\widehat{\phi}$ is positive on separable elements if and only if $\phi$ is a positive map. Because the set of separable elements is a weak$^{\ast}$ closed cone, we can separate it by a functional from any entangled element. It means that an element $x\in M_{d}\otimes N$ is separable if and only if $\widehat{\phi}(x)\geqslant 0$ for all positive maps $\phi: N \to M_d$. 

Let $P= \sum_{k,l} e_{kl}\otimes e_{kl} \in M_{d} \otimes M_d$; it is a positive operator, as it is a multiple of a (rank one) projection. Let $x = \sum_{i} a_{i} \otimes b_{i}$ and suppose that $(Id \otimes \phi)(x) \geqslant 0$. Then $(Tr\otimes Tr)(P (Id \otimes \phi)(x))\geqslant 0$. We can compute this value explicitly as
\[
\sum_{i,k,l} Tr(e_{kl} a_{i})Tr(e_{kl}\phi(b_i)) = \sum_{i,k,l} (a_{i}^{T})_{kl} (\phi(b_i))_{lk} = Tr(\sum_{i} a_{i}^{T} \phi(b_{i})) = \widehat{\phi}(x) \geqslant 0.
\]
It follows from our previous discussion that $x$ has to be separable.
\end{proof}
\begin{lemma}\label{lem:separablesplit}
Let $M := \bigoplus_{i} M_{i}$. Then $x \in M \overline{\otimes} N$ is separable if and only if its components $x_{i} \in M_{i} \overline{\otimes} N$ are separable.
\end{lemma}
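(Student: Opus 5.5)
The plan is to handle the two directions separately, working with central projections. Let $z_i \in M$ denote the central projection onto the summand $M_i$, so $M\overline{\otimes} N \simeq \bigoplus_i M_i \overline{\otimes} N$. Then $x_i = (z_i\otimes 1)x = x(z_i \otimes 1)$ is the component, and $x = \sum_i x_i$, with the sum converging in the weak$^{\ast}$ topology (a strong$^{\ast}$-convergent sum of orthogonal pieces).

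For the forward direction, consider the map $\pi_i: M\overline{\otimes} N \to M_i\overline{\otimes} N$, $y \mapsto (z_i\otimes 1) y$. It is a normal, hence weak$^{\ast}$-continuous, $\ast$-homomorphism. On elementary separable tensors it acts by $\pi_i(a\otimes b) = (z_i a)\otimes b$, and $z_i a \geqslant 0$ because $z_i$ is central and $a\geqslant 0$. So $\pi_i$ sends finite sums of positive elementary tensors to such sums. By weak$^{\ast}$ continuity it then maps $Sep_\sigma(M\overline{\otimes} N)$ into $Sep_\sigma(M_i\overline{\otimes} N)$, and in particular $x_i$ is separable.

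For the converse, suppose each $x_i$ is separable. Let $\iota_i: M_i\overline{\otimes} N \to M\overline{\otimes} N$ be the inclusion as the $i$-th summand. It is normal, and it sends $a\otimes b$ with $a \in (M_i)_+$ to $a\oplus 0\otimes b$, which is a positive elementary tensor in $M\overline{\otimes} N$. Hence $\iota_i(x_i)$ lies in $Sep_\sigma(M\overline{\otimes} N)$. Finite sums $\sum_{i\in F}\iota_i(x_i)$ remain separable since $Sep_\sigma$ is a convex cone. These finite partial sums are bounded by $\|x\|$ and converge weak$^{\ast}$ to $x$ along the net of finite subsets $F$. Because $Sep_\sigma$ is weak$^{\ast}$ closed, $x$ is separable.

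The only potentially delicate point is the weak$^{\ast}$ convergence of the partial sums. Multiplying by $\sum_{i\in F} z_i\otimes 1$, which increases strongly to $1$, gives strong and hence $\sigma$-weak convergence on bounded sets, so I expect no real obstacle here.
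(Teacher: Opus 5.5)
Your proof is correct and follows the same route as the paper: the paper obtains separability of $x_i$ by cutting down with the central projection $z_i$, exactly your map $\pi_i$, and simply calls the converse clear. Your weak$^{\ast}$-convergence argument for the partial sums $\bigl(\sum_{i\in F} z_i\otimes 1\bigr)x$ supplies the detail the paper leaves implicit in that converse direction.
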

\begin{proof}
If the components $x_i$ are separable then $x$ is separable. On the other hand, if $x$ is separable, then $x_{i} = z_{i} x$ is separable, where $z_i$ is the central projection corresponding to the summand $M_i$.
\end{proof}
\begin{corollary}\label{cor:findimhorodecki}
Theorem \ref{thm:Horodecki} holds for finite dimensional von Neumann algebras. Moreover, it is sufficient to treat each type separately.
\end{corollary}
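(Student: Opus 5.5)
The plan is to deduce the corollary directly from Lemma \ref{lem:findimhorodecki} and Lemma \ref{lem:separablesplit}. Any finite dimensional von Neumann algebra has the form $M=\bigoplus_{i=1}^{k} M_{d_i}$, and it is automatically injective. The forward implication of Theorem \ref{thm:Horodecki} is clear: if $x$ is separable then $(Id\otimes\phi)(x)\geq 0$ for every positive normal $\phi$. So only the converse needs work.

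\textbf{Converse.} Suppose $(Id\otimes \phi)(x)\geq 0$ for all positive, normal, finite rank $\phi: N\to M$. Let $z_i$ be the central projection onto the summand $M_{d_i}$, and write $x=\sum_i x_i$ with $x_i=(z_i\otimes 1)x\in M_{d_i}\otimes N$.

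1. Fix $i$ and a normal positive map $\psi:N\to M_{d_i}$. Since $M_{d_i}$ is finite dimensional, $\psi$ is automatically finite rank.
2. Compose $\psi$ with the inclusion $\iota_i: M_{d_i}\to M$ (a $\ast$-homomorphism, hence positive and normal). The map $\phi:=\iota_i\circ\psi: N\to M$ is then positive, normal and finite rank.
3. By hypothesis $(Id\otimes\phi)(x)\geq 0$. Cutting by the projection $z_i\otimes z_i$ preserves positivity, and
\[
(z_i\otimes z_i)(Id\otimes\phi)(x)(z_i\otimes z_i)=(Id\otimes\psi)(x_i)\in M_{d_i}\otimes M_{d_i}.
\]
Here I use that $z_i$ is central, so $(z_i\otimes 1)$ commutes through $Id\otimes\phi$, and that $\phi$ already lands in $z_iM$.
4. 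Lemma \ref{lem:findimhorodecki} now gives that each $x_i$ is separable.
5. Lemma \ref{lem:separablesplit} then gives that $x$ is separable.

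In step 3 the codomain of $\psi$ is $M_{d_i}$ rather than the full $M_{d_i}$ appearing in Lemma \ref{lem:findimhorodecki}. There is no mismatch, because that lemma uses maps into the same matrix algebra $M_d$ that forms the first leg.

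\textbf{The ``moreover'' part.} This is the same mechanism applied to an arbitrary decomposition $M=\bigoplus_j M_j$, for example into type components. Lemma \ref{lem:separablesplit} reduces separability of $x$ to separability of each $(z_j\otimes 1)x$. Steps 2–3 show that the positivity hypothesis for maps into $M$ restricts to the hypothesis for maps into each $M_j$, by composing with the inclusion and compressing by the central projection. Conversely, if the criterion holds for each summand $M_j$, it holds for $M$. Hence it suffices to prove Theorem \ref{thm:Horodecki} for each type summand separately.

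The only point needing care is step 3: checking that the compression by $z_i\otimes z_i$ recovers exactly $(Id\otimes\psi)(x_i)$. This follows from the centrality of $z_i$ and the fact that $\iota_i\circ\psi$ has range in $z_iM$.
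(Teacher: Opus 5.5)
Your proposal is correct and takes the same approach as the paper. You decompose $M$ as a direct sum (of matrix algebras, or of type summands), apply Lemma \ref{lem:findimhorodecki} (respectively the theorem) to each summand, and reassemble with Lemma \ref{lem:separablesplit}. You also make explicit the step the paper leaves implicit: the hypothesis for maps into $M$ gives the hypothesis for maps into each summand by composing with the inclusion and cutting down by the central projection $z_i\otimes 1$ (and your phrase ``$M_{d_i}$ rather than the full $M_{d_i}$'' is just a typo for ``the full $M$'').
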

\begin{proof}
Any finite dimensional von Neumann algebra is a direct sum of matrix algebras, so Lemma \ref{lem:findimhorodecki} applies. Any von Neumann algebra splits into a direct sum of von Neumann algebras of a fixed type, so can treat each type separately. 
\end{proof}
The next steps are standard reductions used in the von Neumann algebra theory.
\begin{lemma}
Let $M$ be a finite, injective, $\sigma$-finite von Neumann algebra. Then $x\in Sep_{\sigma}(M\overline{\otimes} N)$ if and only if $(Id\otimes \phi)(x) \in M\overline{\otimes} M$ is positive for all positive, normal, finite rank maps $\phi: N \to M$.
\end{lemma}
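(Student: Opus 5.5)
Only the ``if'' direction needs proof: assuming $(Id\otimes\phi)(x)\geqslant 0$ for every positive, normal, finite rank $\phi: N\to M$, we must show $x\in Sep_{\sigma}(M\overline{\otimes} N)$. Note that such $x$ is positive, since one may take $\phi$ of the form $\omega(\cdot)\mathds{1}$ with $\omega$ a normal state. The plan is to approximate $M$ by finite dimensional subalgebras, apply Corollary \ref{cor:findimhorodecki} there, and pass to the limit.

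Since $M$ is finite, injective and $\sigma$-finite, it admits a faithful normal tracial state $\tau$. By Connes' theorem (or the standard structure of injective finite algebras), there is an increasing net of finite dimensional $\ast$-subalgebras $M_\alpha\subset M$ with $\bigcup_\alpha M_\alpha$ weak$^{\ast}$ dense. Let $E_\alpha: M\to M_\alpha$ be the unique $\tau$-preserving normal conditional expectations. They are normal, unital and completely positive, and $E_\alpha(a)\to a$ in $\|\cdot\|_{2,\tau}$, hence weak$^{\ast}$ on bounded sets.

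Set $x_\alpha:=(E_\alpha\otimes Id)(x)\in M_\alpha\otimes N$; this is well defined because $E_\alpha$ is normal and completely positive. Take any positive normal map $\psi: N\to M_\alpha$; it is automatically finite rank, and composing with the inclusion $M_\alpha\subset M$ gives an admissible map for the hypothesis. Then
\[
(Id\otimes\psi)(x_\alpha)=(E_\alpha\otimes Id)\bigl((Id\otimes\psi)(x)\bigr),
\]
because the two maps act on different legs and both are normal. This element is positive: the inner expression is positive by hypothesis, and $E_\alpha\otimes Id$ is positive. By Corollary \ref{cor:findimhorodecki}, $x_\alpha\in Sep_{\sigma}(M_\alpha\otimes N)\subset Sep_{\sigma}(M\overline{\otimes} N)$. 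The inclusion holds because the embedding of $M_\alpha$ into $M$ is normal and sends positive elements to positive ones.

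The main obstacle is the final limit $x_\alpha\to x$ in the weak$^{\ast}$ topology of $M\overline{\otimes} N$. The net is bounded, with $\|x_\alpha\|\leqslant\|x\|$. It therefore suffices to test against a total set of normal functionals $\omega\otimes\rho$ with $\omega\in M_{*}$ and $\rho\in N_{*}$. For such functionals,
\[
(\omega\otimes\rho)(x_\alpha)=\omega\bigl(E_\alpha((Id\otimes\rho)(x))\bigr)=(\omega\circ E_\alpha)\bigl((Id\otimes\rho)(x)\bigr).
\]
So we need $\omega\circ E_\alpha\to\omega$ pointwise on $M$. I would prove the stronger statement $\|\omega\circ E_\alpha-\omega\|\to 0$ in norm. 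To do this, approximate $\omega$ in norm by functionals of the form $\tau(h\,\cdot)$ with $h\in\bigcup_\alpha M_\alpha$; this is possible by density in $L^1(M,\tau)$. For such $h$, once $h\in M_\alpha$ we have $\tau(h\,E_\alpha(\cdot))=\tau(h\,\cdot)$. Combining this with boundedness of the net and weak$^{\ast}$ closedness of $Sep_{\sigma}$ gives $x\in Sep_{\sigma}(M\overline{\otimes} N)$.
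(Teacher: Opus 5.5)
Your argument is correct and is essentially the paper's proof: trace-preserving conditional expectations $E_\alpha$ onto an increasing family of finite-dimensional subalgebras, commuting $E_\alpha\otimes Id$ past $Id\otimes\psi$, applying the finite-dimensional criterion, and using weak$^{\ast}$ closedness of $Sep_{\sigma}$. You also add two useful refinements: you work with nets (necessary when $M_{*}$ is not separable), and you actually justify $x_\alpha\to x$ via boundedness and product normal functionals, where the paper just cites point-weak$^{\ast}$ convergence of $E_\alpha$ (that convergence alone already suffices, so proving $\|\omega\circ E_\alpha-\omega\|\to 0$ is more than you need).

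One aside is wrong, though harmless because you never use it. Choosing $\phi=\omega(\cdot)\mathds{1}$ only gives positivity of the slices $(Id\otimes\omega)(x)$, and this does not force $x\geqslant 0$: the flip on $\mathbb{C}^2\otimes\mathbb{C}^2$ has positive slices but eigenvalue $-1$. Positivity of $x$ does hold, but only a posteriori from separability.
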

\begin{proof}
Since $M$ is finite and $\sigma$-finite, it admits a faithful normal tracial state $\tau$. It follows that any von Neumann subalgebra $M' \subset M$ admits a normal faithful conditional expectation $\mathbb{E}_{M'}: M \to M'$. As $M$ is injective, it is an increasing union of finite dimensional subalgebras $M_{k}$ with the corresponding conditional expectations $\mathbb{E}_{k}$.

Let now $x \in M \overline{\otimes} N$ satisfy that $(Id\otimes \phi)(x) \in M\overline{\otimes} M$ is positive for all positive, normal, finite rank maps $\phi: N \to M$. Now let's define $x_k:=(\mathbb{E}_{k}\otimes Id)(x)$, then $x_k\rightarrow x$ in the weak$^*$ sense as $\mathbb{E}_{k}$ converges in the point-weak$^{\ast}$ topology to identity. Now note that 
\[(Id\otimes \phi)(x_k)=(Id\otimes \phi)(\mathbb{E}_{k}\otimes Id)(x)=(\mathbb{E}_{k}\otimes Id)(Id\otimes \phi)(x)\geq0,\]
where we have used that $(Id\otimes \phi)(x)\geq 0$ and the fact that $E_k$ is a completely positive map. 
 It follows from Lemma \ref{lem:findimhorodecki} that $x_k$ is separable for all $k \in \mathbb{N}$. So $x$ is separable as well as the set of separable elements is weak$^*$ closed. 
\end{proof}
\begin{lemma}\label{lem:semifinite}
Theorem \ref{thm:Horodecki} holds if $M$ is semifinite, injective and $\sigma$-finite.
\end{lemma}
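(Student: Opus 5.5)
Since $M$ is semifinite and $\sigma$-finite, I would reduce to the finite case already handled by cutting down with finite projections. Let $\tau$ be a faithful normal semifinite trace on $M$. Because $M$ is $\sigma$-finite, there is an increasing sequence of $\tau$-finite projections $p_k \in M$ with $p_k \nearrow 1$ strongly. Each corner $p_k M p_k$ is a finite von Neumann algebra; it carries the faithful normal tracial state $\tau(p_k\,\cdot\,p_k)/\tau(p_k)$, so it is $\sigma$-finite. It is also injective, being the image of $M$ under the conditional expectation $y\mapsto p_k y p_k$. Consequently the preceding lemma applies to $p_kMp_k$.

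Now take $x\in M\overline{\otimes}N$ such that $(Id\otimes\phi)(x)\geqslant 0$ for all positive, normal, finite rank $\phi: N\to M$, and set $x_k:=(p_k\otimes 1)x(p_k\otimes 1)\in p_kMp_k\overline{\otimes}N$. To verify the hypothesis for $x_k$, fix a positive, normal, finite rank map $\psi: N\to p_kMp_k$. Composing it with the inclusion $p_kMp_k\subset M$ gives a map $\phi: N\to M$ of the same kind. Then
\[
(Id\otimes\psi)(x_k)=(p_k\otimes p_k)\,(Id\otimes\phi)(x)\,(p_k\otimes p_k)\geqslant 0,
\]
where the identity holds because $\phi$ takes values in $p_kMp_k$, so applying it to the second leg commutes with compressing the first leg by $p_k$. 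By the finite case, $x_k\in Sep_\sigma(p_kMp_k\overline{\otimes}N)$. Since the inclusion of the corner into $M$ sends positive elements to positive elements and is normal, $x_k\in Sep_\sigma(M\overline{\otimes}N)$. Finally, $x_k\to x$ weak$^*$ because $p_k\otimes 1\to 1$ strongly. As $Sep_\sigma$ is weak$^*$ closed, $x$ is separable. The converse direction is clear, as noted after Theorem \ref{thm:Horodecki}.

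The main obstacle is a mismatch in the codomain. The finite-case lemma quantifies over maps into the corner $p_kMp_k$, while the hypothesis on $x$ concerns maps into $M$. The argument above handles this by viewing maps into the corner as maps into $M$; I would double-check that positivity is then stated in $p_kMp_k\overline{\otimes}p_kMp_k$, which is a corner of $M\overline{\otimes}M$, so that positivity passes correctly between the two. I would also confirm that the finite case allows non-unital corners, which is harmless since the identity of $p_kMp_k$ is $p_k$.
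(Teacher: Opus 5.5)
Your proposal is correct and follows the same approach as the paper: cut down by an increasing sequence of finite, $\sigma$-finite (and injective) corners $p_kMp_k$, apply the finite case there, and pass to the limit using $p_k\to 1$. You also fill in the details the paper leaves implicit, namely transferring the hypothesis to maps into the corner, pushing separability from $p_kMp_k\overline{\otimes}N$ into $M\overline{\otimes}N$ via the normal inclusion, and the weak$^*$ convergence $x_k\to x$. One clarification to your justification of the displayed identity: $(Id\otimes\phi)\bigl((p_k\otimes 1)x(p_k\otimes 1)\bigr)=(p_k\otimes 1)(Id\otimes\phi)(x)(p_k\otimes 1)$ is the $M\otimes 1$-bimodule property of the slice-map formula $Id\otimes\phi=\sum_i (Id\otimes\omega_i)(\cdot)\otimes m_i$ and holds for any finite rank normal $\phi$, whereas the fact that $\phi$ lands in $p_kMp_k$ is what additionally lets you compress the second leg by $p_k$.
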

\begin{proof}
One can consider an increasing family of projections $p_{k}$ converging to identity such that the corner $p_{k} M p_{k}$ is finite and $\sigma$-finite. In $p_{k}M p_{k}$ one can use the previous lemma and then use the convergence of $p_{k}$ to show that the same holds in $M$.    
\end{proof}

\begin{lemma}
Theorem \ref{thm:Horodecki} holds if $M$ is an injective $\sigma$-finite von Neumann algebra of type III.
\end{lemma}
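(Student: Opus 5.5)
The plan is to reduce the type III case to the semifinite case, Lemma \ref{lem:semifinite}, using Takesaki duality and crossed products. Only the nontrivial direction needs proof. So let $x\in M\overline{\otimes}N$ satisfy $(Id\otimes\phi)(x)\geqslant 0$ for all positive, normal, finite rank maps $\phi:N\to M$, and we want to show $x\in Sep_\sigma(M\overline{\otimes}N)$.

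First fix a faithful normal state $\varphi$ on $M$, which exists since $M$ is $\sigma$-finite. Form the crossed product $\widetilde M := M\rtimes_{\sigma^\varphi}\mathbb{R}$ by the modular automorphism group. This algebra is semifinite, and it is injective because $M$ is injective and $\mathbb{R}$ is amenable. It is also $\sigma$-finite, since $M$ is $\sigma$-finite and $L^\infty(\mathbb{R})$ is separable. By Takesaki's theorem there is a normal faithful operator valued weight $T:\widetilde M_+\to \widehat{M}_+$, obtained by integrating the dual action. This is awkward because it is not a conditional expectation. I would therefore avoid $T$ and use the normal embedding $\pi:M\hookrightarrow\widetilde M$ together with Haagerup's approximation instead.

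The more robust route is Haagerup's reduction: $M$ embeds into a larger $\sigma$-finite injective algebra $R$ carrying a normal faithful conditional expectation $E:R\to M$. In addition, $R$ is the weak$^\ast$ closure of an increasing union of finite (hence semifinite) von Neumann subalgebras $R_k$, each with faithful normal conditional expectations $E_k:R\to R_k$ that converge pointwise weak$^\ast$ to the identity. I expect to use a crossed product by a discrete dyadic subgroup of $\mathbb{R}$ here.

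With this in hand, set $x_k:=(E_k\otimes Id)(x)\in R_k\overline{\otimes}N$. For a positive normal finite rank map $\phi:N\to R_k$, the composition $E\circ\phi$ (or an appropriate compression) gives a map into $M$ that the hypothesis controls. From this I would derive $(Id\otimes\phi)(x_k)\geqslant0$, using that the $E_k$ are completely positive and commute with $Id\otimes\phi$, exactly as in the finite case. Lemma \ref{lem:semifinite}, applied in $R_k$, then gives $x_k\in Sep_\sigma(R_k\overline{\otimes}N)\subseteq Sep_\sigma(R\overline{\otimes}N)$. Letting $k\to\infty$ yields $x\in Sep_\sigma(R\overline{\otimes}N)$. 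Finally, applying $E\otimes Id$, which is normal, positive, and sends elementary positive tensors $a\otimes b$ to $E(a)\otimes b$, shows that $x=(E\otimes Id)(x)\in Sep_\sigma(M\overline{\otimes}N)$.

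The main obstacle is transferring the hypothesis from maps into $M$ to maps into $R_k$. The hypothesis only gives positivity of $(Id\otimes\phi)(x)$ for $\phi:N\to M$, but we need it for $\phi:N\to R_k$, with $x$ viewed in $R\overline{\otimes}N$. I would first try to avoid the issue through the duality formulation used in Lemma \ref{lem:findimhorodecki}. There, separability of $x$ is tested by functionals, and the relevant maps $\phi$ land in a finite dimensional algebra $M_d$. Since $M$ is type III, every $M_d$ embeds unitally into $M$, so positivity for maps into $M_d$ follows from positivity for maps into $M$ by composing with a unital embedding $M_d\hookrightarrow M$ and then with a conditional expectation back. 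If this works, it lets one bypass the semifinite reduction entirely: approximate $x$ weak$^\ast$ by $(\mathbb{E}\otimes Id)(x)$, where $\mathbb{E}$ ranges over completely positive finite rank maps through matrix algebras approximating the identity, which is possible by injectivity (semidiscreteness). Each approximant is then separable by Lemma \ref{lem:findimhorodecki}.
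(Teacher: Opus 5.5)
Your leading route through Haagerup's reduction does not close, and the obstacle you flag is a genuine gap. Fix a normal positive $\phi:N\to R_k$. The hypothesis only gives $(Id\otimes E\circ\phi)(x)=(Id\otimes E)\bigl((Id\otimes\phi)(x)\bigr)\geqslant 0$. Since $Id\otimes E$ does not reflect positivity, nothing in your sketch yields $(Id\otimes\phi)(x)\geqslant 0$, which is what you need before applying $E_k\otimes Id$. The only repair I see is the matrix-embedding trick of your last paragraph, and once you have that, the passage to $R$ is unnecessary.

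Your last paragraph, however, is a correct proof. It is genuinely different from the paper's, which invokes \cite[Lemma 3.7]{MillerOlkiewicz} and the semifiniteness of the continuous core $M\rtimes_{\sigma^\varphi}\mathbb{R}$ to fall back on Lemma \ref{lem:semifinite}. Written out, your argument runs as follows.

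\begin{enumerate}
\item Injective implies semidiscrete, which the paper also uses in Proposition \ref{prop:cbnormbidual}. So there are normal u.c.p.\ maps $\phi_\lambda:M\to M_{n_\lambda}$ and u.c.p.\ maps $\psi_\lambda:M_{n_\lambda}\to M$ with $\psi_\lambda\circ\phi_\lambda\to Id$ point-weak$^\ast$. Hence $(\psi_\lambda\circ\phi_\lambda\otimes Id)(x)\to x$ weak$^\ast$.
\item Apply Lemma \ref{lem:findimhorodecki} to $y_\lambda:=(\phi_\lambda\otimes Id)(x)\in M_{n_\lambda}\otimes N$, not to the approximant itself, which lives in $M\overline{\otimes}N$.
\item Let $\theta:N\to M_n$ be normal and positive, with $n=n_\lambda$. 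Since $M$ is properly infinite, pick matrix units in $M$ summing to $\mathds{1}$, so that $M\cong M_n\otimes e_{11}Me_{11}$. Take $\iota(a)=a\otimes\mathds{1}$ and the normal c.p.\ left inverse $\omega=Id\otimes\varphi_0$, where $\varphi_0$ is a normal state.
\item The hypothesis applied to $\iota\circ\theta$ gives $(Id\otimes\theta)(x)=(Id\otimes\omega)\bigl((Id\otimes\iota\circ\theta)(x)\bigr)\geqslant 0$. Therefore $(Id\otimes\theta)(y_\lambda)=(\phi_\lambda\otimes Id)\bigl((Id\otimes\theta)(x)\bigr)\geqslant 0$, and $y_\lambda$ is separable.
\item The normal map $\psi_\lambda\otimes Id$ sends $a\otimes b$ with $a,b\geqslant 0$ to $\psi_\lambda(a)\otimes b$. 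So it carries $Sep_\sigma(M_{n_\lambda}\otimes N)$ into $Sep_\sigma(M\overline{\otimes}N)$, and weak$^\ast$-closedness finishes the proof.
\end{enumerate}

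In effect you are running the paper's finite-case argument. The hyperfinite filtration is replaced by semidiscrete factorizations through $M_{n_\lambda}$, and the conditional expectations are replaced by the matrix units that proper infiniteness supplies.

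Compared with the paper's route, yours needs no crossed products, Takesaki duality, $\sigma$-finiteness, or the finite and semifinite lemmas. It only needs unital copies of every $M_n$ in $M$ with normal c.p.\ left inverses, so it treats every properly infinite injective $M$ at once. The paper's core-based reduction is heavier, but it is organized to funnel all types, including type I$_n$ summands where your trick is unavailable, through the finite case.
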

\begin{proof}
One can use the same proof as in \cite[Lemma 3.7]{MillerOlkiewicz}, using the fact that the continuous core is semifinite.
\end{proof}
\begin{proof}[Proof of Theorem \ref{thm:Horodecki}]
It follows from all the lemmas above that the result holds for $\sigma$-finite von Neumann algebras, as we handled the type III case and the semifinite case, and every von Neumann algebras splits into a direct sum of those two cases. The last step is to get rid of the $\sigma$-finite assumption. This can be done similarly to the Lemma \ref{lem:semifinite}, because in an arbitrary von Neumann algebra $M$ one can find an increasing family of $\sigma$-finite projections converging to identity.
\end{proof}

\begin{corollary}\label{cor:horodeckicstar}
Let $A$ and $B$ be $C^{\ast}$-algebras and assume that $A$ is nuclear. Then $x\in Sep(A\otimes B)$ if and only if $(Id \otimes \phi)(x)\geqslant 0$ for all finite rank, positive maps $\phi: B \to A$.    
\end{corollary}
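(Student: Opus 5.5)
We reduce to Theorem \ref{thm:Horodecki} by passing to biduals. Since $A$ is nuclear, $A^{\ast\ast}$ is an injective von Neumann algebra, and there is a natural normal inclusion of $A\otimes_{\min} B$ into $A^{\ast\ast}\overline{\otimes} B^{\ast\ast}$. For $x \in A\otimes B$ we aim to prove
\[
x \in Sep(A\otimes B) \iff x \in Sep_{\sigma}(A^{\ast\ast}\overline{\otimes} B^{\ast\ast}).
\]
Separately, we relate positivity of $(Id\otimes\phi)(x)$ for finite rank positive maps $\phi: B\to A$ to the corresponding condition for normal finite rank positive maps $B^{\ast\ast}\to A^{\ast\ast}$.

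The direction ``separable implies positivity'' is immediate, since $(Id\otimes \phi)(a\otimes b)= a\otimes\phi(b)\geqslant 0$. We then pass to norm limits using that $Id\otimes\phi$ is bounded on the minimal tensor product; this holds because $\phi$ is finite rank, so $Id\otimes \phi$ is a finite sum of slice-type maps.

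For the converse, let $\Phi: B^{\ast\ast}\to A^{\ast\ast}$ be a normal, finite rank, positive map. We restrict it to $B$ and get a finite rank positive map $B\to A^{\ast\ast}$ whose range is a finite-dimensional operator system $E\subset A^{\ast\ast}$. We need to show that $(Id\otimes\Phi)(x)\geqslant 0$ in $A^{\ast\ast}\overline{\otimes}A^{\ast\ast}$.

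The plan is to approximate $\Phi|_B$ in the point-norm, or at least point-weak, topology by finite rank positive maps $B\to A$. Write $\Phi|_B = \sum_{j} f_j(\cdot)\, e_j$ with $f_j \in B^{\ast}$ and $e_j\in A^{\ast\ast}$. By Kaplansky density and Goldstine, each $e_j$ is a weak$^{\ast}$ limit of a net from $A$. The difficulty is that the approximants must remain positive.

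To handle positivity, we use the structure of finite rank positive maps. Any positive finite rank map factors through a finite-dimensional commutative C$^{\ast}$-algebra only approximately, but here is a cleaner route. Since $\Phi|_B$ is positive and $E$ is finite dimensional, we can write $\Phi|_B$ as a weak$^{\ast}$ pointwise limit of maps $b\mapsto \sum_k \omega_k(b)\,a_k$ with $\omega_k$ positive functionals and $a_k\geqslant 0$. Concretely, we compose $\Phi$ with unital completely positive approximations of $A^{\ast\ast}$ through matrix algebras $M_{n}$, which exist by injectivity. This gives maps $B\to M_n$. Positive maps into $M_n$ are weak$^{\ast}$ limits of sums $\sum \omega_k(\cdot)\,P_k$ with $P_k\geqslant 0$, by the finite-dimensional Horodecki duality in Lemma \ref{lem:findimhorodecki}. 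We then push back by ucp maps $M_n\to A$ coming from nuclearity, which give point-norm approximate factorizations of the identity of $A$.

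Each approximant $\psi$ is then finite rank and positive from $B$ to $A$. By hypothesis $(Id\otimes\psi)(x)\geqslant0$, and weak$^{\ast}$ limits preserve positivity, so $(Id\otimes\Phi)(x)\geqslant 0$. Theorem \ref{thm:Horodecki} then gives $x\in Sep_{\sigma}(A^{\ast\ast}\overline{\otimes}B^{\ast\ast})$.

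The last step is descending to $Sep(A\otimes B)$. We use the Remark: the weak closure of a convex set equals its norm closure, and the weak topology on $A\otimes B$ is induced by the weak$^{\ast}$ topology of the bidual. We must check that $Sep_{\sigma}(A^{\ast\ast}\overline{\otimes}B^{\ast\ast})\cap (A\otimes B)$ lies in the $\sigma((A\otimes B),(A\otimes B)^{\ast})$-closure of the algebraic separable cone of $A\otimes B$. By Hahn--Banach, this amounts to the following claim. Let $f\in (A\otimes_{\min}B)^{\ast}$ be nonnegative on $Sep(A\otimes B)$. Then $f$ extends to a normal functional on the bidual of $A\otimes_{\min}B$ which is nonnegative on elementary tensors $a\otimes b$ with $a\in A^{\ast\ast}_+$ and $b\in B^{\ast\ast}_+$. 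We prove this by approximating $a$ and $b$ weak$^{\ast}$ by positive elements of $A$ and $B$, via Kaplansky density for positive parts, and using separate weak$^{\ast}$ continuity.

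We expect this identification, between the bidual $(A\otimes_{\min}B)^{\ast\ast}$ and $A^{\ast\ast}\overline{\otimes}B^{\ast\ast}$ and the compatibility of the two closures, to be the main technical obstacle. If needed, we will work directly in $(A\otimes_{\min} B)^{\ast\ast}$, which contains commuting copies of $A^{\ast\ast}$ and $B^{\ast\ast}$, and replace $\overline{\otimes}$ by the von Neumann algebra they generate.
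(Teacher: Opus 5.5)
Your skeleton matches the paper's sketch: pass to $A^{\ast\ast}$ (injective), apply Theorem \ref{thm:Horodecki}, then descend. But the step transferring the hypothesis from finite rank positive maps $B\to A$ to normal finite rank positive maps $\Phi: B^{\ast\ast}\to A^{\ast\ast}$ rests on a false claim.

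\textbf{The approximation step.} Maps $b\mapsto\sum_k\omega_k(b)a_k$ with $\omega_k\geqslant 0$ and $a_k\geqslant 0$ are completely positive (indeed entanglement breaking). Point-weak$^{\ast}$ limits of completely positive maps are completely positive. So neither $\Phi|_B$ nor a positive map $B\to M_n$ is such a limit unless it is already completely positive; the transpose on $M_2$ is a counterexample. Lemma \ref{lem:findimhorodecki} says that positive maps detect separable \emph{elements}; it does not decompose positive \emph{maps}.

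The argument also proves too much. For such $\psi$ one has $(Id\otimes\psi)(x)=\sum_k(Id\otimes\omega_k)(x)\otimes a_k\geqslant 0$ for every positive $x$, so you would conclude that every positive element is separable. In addition, the maps $M_n\to A$ supplied by nuclearity of $A$ are partners of maps $A\to M_n$, not of the maps $A^{\ast\ast}\to M_n$ supplied by injectivity, so composing them does not approximate $Id_{A^{\ast\ast}}$.

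The missing observation is that positivity survives post-composition with completely positive maps, so you should not decompose $\Phi$ at all. As in the proof of Proposition \ref{prop:cbnormbidual}, apply local reflexivity of $A$ to the finite dimensional operator system $E$ spanned by the range of $\Phi$ and the unit. This gives completely positive contractions $\theta_i: E\to A$ converging point-weak$^{\ast}$ to $Id_E$. Then $\theta_i\circ\Phi|_B: B\to A$ is finite rank and positive. Since $(Id\otimes\Phi)(x)=\sum_j y_j\otimes e_j$ with $y_j\in A$ and $e_j\in E$, you get $(Id\otimes(\theta_i\circ\Phi))(x)=\sum_j y_j\otimes\theta_i(e_j)\to(Id\otimes\Phi)(x)$ weak$^{\ast}$, and positivity passes to the limit.

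\textbf{The descent step.} You flag the descent from $Sep_{\sigma}(A^{\ast\ast}\overline{\otimes}B^{\ast\ast})$ to $Sep(A\otimes B)$, and your proposed claim does not close it. The weak topology of $A\otimes_{\min}B$ comes from $(A\otimes_{\min}B)^{\ast\ast}$, of which $A^{\ast\ast}\overline{\otimes}B^{\ast\ast}$ is in general a proper quotient. So a functional $f$ separating $x$ from $Sep(A\otimes B)$ need not be normal on $A^{\ast\ast}\overline{\otimes}B^{\ast\ast}$; for $A=B=C[0,1]$, integration over the diagonal of $[0,1]^2$ is not. Theorem \ref{thm:Horodecki} concerns only the spatial tensor product, so retreating to $(A\otimes_{\min}B)^{\ast\ast}$ does not help. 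The paper's sketch passes over the same point through the argument of Proposition \ref{prop:separablebidual}.

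Nuclearity of $A$ closes the gap:
\begin{enumerate}
\item Pick completely positive contractions $\alpha: A\to M_n$ and $\beta: M_n\to A$ with $\beta\circ\alpha$ close enough to $Id_A$ pointwise that $f(((\beta\circ\alpha)\otimes Id)(x))<0$.
\item Then $f\circ((\beta\circ\alpha)\otimes Id)=g\circ(\alpha\otimes Id)$, where $g=f\circ(\beta\otimes Id)$ is nonnegative on $Sep(M_n\otimes B)$.
\item Since $(M_n\otimes B)^{\ast\ast}=M_n\otimes B^{\ast\ast}$, $g$ has a normal extension $\tilde{g}$, and $\tilde{g}\circ(\alpha^{\ast\ast}\otimes Id)$ is a normal functional on $A^{\ast\ast}\overline{\otimes}B^{\ast\ast}$.
\item By Kaplansky density this functional is nonnegative on $a\otimes b$ for $a\in A^{\ast\ast}_{+}$ and $b\in B^{\ast\ast}_{+}$, hence on $Sep_{\sigma}(A^{\ast\ast}\overline{\otimes}B^{\ast\ast})$. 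Yet it is negative at $x$, a contradiction.
\end{enumerate}
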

\begin{proof}
One can prove it in exactly the same way as \cite[Theorem 4.1]{MillerOlkiewicz} (see also the proof of Proposition \ref{prop:cbnormbidual}, and Proposition \ref{prop:separablebidual}). The basic idea is that if $x\in Sep(A\otimes B)$, then $x$ is separable in $A^{**}\otimes B^{**}$. Since $A$ is nuclear, $M:=A^{**}$ is injective. Now we can use Theorem \ref{thm:Horodecki} and the fact that any positive map $\Phi: B\rightarrow A$ induces a normal positive map on the biduals.
\end{proof}
We will also show that it suffices to work with unital positive maps.
\begin{lemma}
Let $x \in M\overline{\otimes} N$, where $M$ is an injective von Neumann algebra. If $(Id\otimes \phi)(x)\geqslant 0$ for all normal, unital, finite rank, positive maps $\phi: N\to M$ then $x$ is separable.
\end{lemma}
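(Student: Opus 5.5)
We reduce the statement to Theorem \ref{thm:Horodecki}. Let $\phi: N \to M$ be a normal, finite rank, positive map; we have to show that $(Id\otimes\phi)(x)\geqslant 0$. The natural approach is to perturb $\phi$ into a unital map. Put $h := \phi(\mathds{1}_N) \in M_{+}$.

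If $h$ were invertible we would set $\psi := h^{-1/2}\phi(\cdot)h^{-1/2}$. This map is normal, unital, positive and of finite rank, because its range is $h^{-1/2}\,\mathrm{ran}(\phi)\,h^{-1/2}$. Hence $(Id\otimes\psi)(x)\geqslant 0$ by hypothesis. We then recover
\[(Id\otimes\phi)(x) = (\mathds{1}\otimes h^{1/2})\,(Id\otimes\psi)(x)\,(\mathds{1}\otimes h^{1/2}) \geqslant 0.\]
In general $h$ need not be invertible, so we regularise. Fix a normal state $\omega$ on $N$ and, for $\varepsilon>0$, set $\phi_\varepsilon := \phi + \varepsilon\,\omega(\cdot)\mathds{1}_M$. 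This map is again normal, positive and of finite rank, its rank growing by at most one. Moreover $\phi_\varepsilon(\mathds{1}_N) = h+\varepsilon\mathds{1}_M$ is invertible, so the previous paragraph gives $(Id\otimes\phi_\varepsilon)(x)\geqslant 0$.

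It remains to let $\varepsilon\to 0$. We have
\[(Id\otimes\phi_\varepsilon)(x) = (Id\otimes\phi)(x) + \varepsilon\,(Id\otimes\omega)(x)\otimes\mathds{1}_M,\]
where $Id\otimes\omega$ is the normal slice map $M\overline{\otimes}N\to M$, which is bounded. The second term therefore tends to $0$ in norm, and since the positive cone is norm closed, $(Id\otimes\phi)(x)\geqslant 0$. Since $\phi$ was an arbitrary normal, finite rank, positive map, Theorem \ref{thm:Horodecki} now shows that $x$ is separable.

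The main obstacle is making sense of $(Id\otimes\psi)$ and $(Id\otimes\phi_\varepsilon)$ on $M\overline{\otimes}N$ for maps that are merely positive, not completely positive. Here normality and finite rank are used. A finite rank normal map has the form $\phi(b)=\sum_{j}\omega_j(b)m_j$ with $\omega_j$ normal functionals, so $Id\otimes\phi$ is the finite sum of slice maps $\sum_j (Id\otimes\omega_j)(\cdot)\otimes m_j$. This is well defined and normal, and it is compatible with the multiplication by $\mathds{1}\otimes h^{\pm1/2}$ used above. The paper uses these maps in the same way in Theorem \ref{thm:Horodecki}.
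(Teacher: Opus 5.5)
Your proof is correct and follows essentially the same route as the paper: you regularize by $\phi_\varepsilon=\phi+\varepsilon\,\omega(\cdot)\mathds{1}_M$, normalize by $\phi_\varepsilon(\mathds{1}_N)^{-1/2}$ to get a unital map, let $\varepsilon\to 0$, and then invoke Theorem \ref{thm:Horodecki}. Taking $\omega$ to be a \emph{normal} state, so that $\phi_\varepsilon$ stays normal, is a small but genuine improvement on the paper's ``arbitrary state''.
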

\begin{proof}
We will first show that it suffices to consider maps $\phi$ for which $\phi(\mathds{1}_{N})$ is invertible. Indeed, let $\varphi: N \to \mathbb{C}$ be an arbitrary state. Then we can define $\phi_{\varepsilon}(n):= \phi(n) +\varepsilon \varphi(n)\mathds{1}_{M}$. It follows that $\phi_{\varepsilon}(\mathds{1}_N) \geqslant \varepsilon \mathds{1}_{M}$, so it is invertible. By convergence $\lim_{\varepsilon \to 0} \phi_{\varepsilon} = \phi$ it follows that if $(Id\otimes \phi_{\varepsilon})(x)\geqslant 0$ for all $\varepsilon > 0$, then $(Id\otimes \phi)(x)\geqslant 0$.

If $\phi(\mathds{1}_{N})$ is invertible then one can consider the unital positive map $\widetilde{\phi}(n) := \phi(\mathds{1}_{N})^{-\frac{1}{2}}\phi(n) \phi(\mathds{1}_{N})^{-\frac{1}{2}}$. Clearly $(Id\otimes \phi)(x)\geqslant 0$ if and only if $(Id\otimes \widetilde{\phi})(x)\geqslant 0$ as $\widetilde{\phi}$ and $\phi$ differ by a completely positive map with a completely positive inverse.
\end{proof}

\section{The C$^*$-algebra case}
In this section we will explore how the results from Section \ref{sec:matrix} can be extended to the infinite dimensional setting, inspired by the recent paper \cite{MusatRordam}. We will start with the analogue of Definition \ref{def:etaandgamma}, where we have to distinguish between the case of $C^{\ast}$-algebras and von Neumann algebras.
\begin{definition}
Let $A$ and $B$ be $C^{\ast}$-algebras. We define  
\begin{itemize}
    \item $\eta(A, B) :=\sup\{\|\Phi\|_{cb}: \Phi: A\rightarrow B,  \ \text{contractive positive map}\}$.
\item  $\eta_{u}(A, B) :=\sup\{\|\Phi\|_{cb}: \Phi: A\rightarrow B,  \ \text{unital positive map}\}$.
\item $\gamma(A, B) := \sup\{r \in \mathbb{R}_{+}: \mathds{1}_A \otimes \mathds{1}_B - x \text{ is separable for all } x=x^{\ast}\text{ with }\|x\|\leqslant r\}$.
\end{itemize}
For $M$ and $N$ von Neumann algebras we define
\begin{itemize}
    \item $\eta^{\sigma}(M, N) :=\sup\{\|\Phi\|_{cb}: \Phi: M\rightarrow N,  \ \text{normal, contractive, positive map}\}$.
\item  $\eta_{u}^{\sigma}(M, N) :=\sup\{\|\Phi\|_{cb}: \Phi: M\rightarrow N,  \ \text{normal, unital, positive map}\}$.
\item $\gamma^{\sigma}(M, N) := \sup\{r \in \mathbb{R}_{+}: \mathds{1}_M \otimes \mathds{1}_N - x \text{ is separable for all } x=x^{\ast}\text{ with }\|x\|\leqslant r\}$.
\end{itemize}

\end{definition}

We will start our analysis by establishing a correspondence between the $\eta$ and $\gamma$ parameters of $C^{\ast}$-algebras and their biduals.
\begin{prop}\label{prop:separablebidual}
We have $\gamma(A,B) = \gamma^{\sigma}(A^{\ast\ast}, B^{\ast\ast})$.
\end{prop}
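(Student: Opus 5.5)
We prove the two inequalities $\gamma(A,B)\leqslant \gamma^{\sigma}(A^{\ast\ast},B^{\ast\ast})$ and $\gamma(A,B)\geqslant \gamma^{\sigma}(A^{\ast\ast},B^{\ast\ast})$ separately, working inside $A^{\ast\ast}\overline{\otimes} B^{\ast\ast}$. The first basic fact we need is that $A\otimes_{\min} B$ sits inside $A^{\ast\ast}\overline{\otimes}B^{\ast\ast}$ isometrically. The second is that $Sep(A\otimes B)=Sep_{\sigma}(A^{\ast\ast}\overline{\otimes}B^{\ast\ast})\cap (A\otimes_{\min}B)$. For the inclusion ``$\subseteq$'', note that finite sums $\sum a_i\otimes b_i$ with $a_i,b_i\geqslant 0$ are separable in the bidual, and norm limits stay in the weak$^\ast$ closed cone. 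For ``$\supseteq$'', we use the Remark after the definition of $Sep$: the weak topology of $A\otimes_{\min}B$ is the restriction of the weak$^\ast$ topology of $(A\otimes_{\min}B)^{\ast\ast}$. We must then compare this with the weak$^\ast$ topology of $A^{\ast\ast}\overline{\otimes}B^{\ast\ast}$. The plan is to separate instead: if $x\in A\otimes_{\min} B$ is not in $Sep(A\otimes B)$, Hahn--Banach gives a functional $\omega\in (A\otimes_{\min}B)^\ast$ that is nonnegative on $a\otimes b$ ($a,b\geqslant0$) and negative at $x$. Such an $\omega$ corresponds to a positive map $A\to B^\ast$. Its extension gives a bilinear form on $A^{\ast\ast}\times B^{\ast\ast}$, separately normal and positive on $a\otimes b$ for positive $a,b$ by Kaplansky density of positive elements. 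The issue is whether this form defines a normal functional on $A^{\ast\ast}\overline{\otimes}B^{\ast\ast}$. This is the \emph{main obstacle}. I would first try to avoid it by approximating the elements of $Sep_\sigma$ by nets of finite sums whose legs are positive elements of the biduals, and then replacing each leg by positive elements of $A$ and $B$ (Kaplansky) converging weak$^\ast$. This yields weak convergence in $A\otimes_{\min}B$ only for elements already in $A\otimes B$, so I would possibly need nuclearity, or restrict to $x$ of the form $1-y$ with $y\in A\otimes_{\min}B$ and exploit the order unit.

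Granting the intersection identity, the inequality $\gamma(A,B)\geqslant\gamma^{\sigma}(A^{\ast\ast},B^{\ast\ast})$ is immediate. If $r<\gamma^{\sigma}$ and $x=x^\ast\in A\otimes_{\min}B$ has $\|x\|\leqslant r$, then $x$ also has norm $\leqslant r$ in the bidual tensor product. Hence $1-x$ is $\sigma$-separable, and it lies in $A\otimes_{\min}B$, so it lies in $Sep(A\otimes B)$.

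For the converse, take $r<\gamma(A,B)$ and $x=x^\ast\in A^{\ast\ast}\overline{\otimes}B^{\ast\ast}$ with $\|x\|\leqslant r$. Since $A\odot B$ is weak$^\ast$ dense in $A^{\ast\ast}\overline{\otimes}B^{\ast\ast}$, Kaplansky's density theorem gives a net $x_\lambda=x_\lambda^\ast\in A\otimes_{\min}B$ with $\|x_\lambda\|\leqslant r$ and $x_\lambda\to x$ weak$^\ast$. Each $1-x_\lambda$ lies in $Sep(A\otimes B)\subseteq Sep_\sigma$, and $Sep_\sigma$ is weak$^\ast$ closed, so $1-x\in Sep_\sigma$. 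This gives $\gamma^{\sigma}\geqslant r$. Note that this direction only uses the easy inclusion ``$\subseteq$''.
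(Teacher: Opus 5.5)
Your proof of $\gamma(A,B)\leqslant\gamma^{\sigma}(A^{\ast\ast},B^{\ast\ast})$, via Kaplansky density and the easy inclusion $Sep(A\otimes B)\subseteq Sep_{\sigma}(A^{\ast\ast}\overline{\otimes}B^{\ast\ast})$, is correct and is exactly the first half of the paper's proof. The reverse inequality, however, is not proved. It rests entirely on the inclusion $Sep_{\sigma}(A^{\ast\ast}\overline{\otimes}B^{\ast\ast})\cap(A\otimes_{\min}B)\subseteq Sep(A\otimes B)$, which you only grant, so this is a genuine gap.

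The paper handles this step along the lines you first tried. It writes $\mathds{1}_A\otimes\mathds{1}_B-H$ as a weak$^{\ast}$-limit of sums $\sum_i a_i\otimes b_i$ with positive legs in the biduals, and replaces the legs by positive elements of $A$ and $B$ via Kaplansky. It then asserts that weak$^{\ast}$-convergence in $A^{\ast\ast}\overline{\otimes}B^{\ast\ast}$ gives weak convergence in $A\otimes_{\min}B$, where the convex cone $Sep(A\otimes B)$ is weakly closed. Your doubt about precisely this inference is justified. The weak$^{\ast}$ topology restricts to $\sigma(A\otimes_{\min}B,\mathcal{N})$, with $\mathcal{N}$ the restrictions of normal functionals, and $\mathcal{N}$ is in general a proper subspace of $(A\otimes_{\min}B)^{\ast}$. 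For $A=B=C[0,1]$, normal functionals restrict to measures absolutely continuous with respect to countable sums of product measures. Such measures charge the diagonal only at atoms, so $f\mapsto\int_0^1 f(t,t)\,dt$ is not in $\mathcal{N}$. (The inference is fine when one factor is a matrix algebra, since $(M_d\otimes B)^{\ast\ast}=M_d\otimes B^{\ast\ast}$.) So an extra idea is needed here.

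One way to supply it is to approximate the separating functional by normal ones instead of extending it.

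\emph{Case 1: $A$ is nuclear} (the case of nuclear $B$ is symmetric). Take c.c.p.\ maps $\phi_\lambda:A\to M_{n_\lambda}$ and $\psi_\lambda:M_{n_\lambda}\to A$ with $\psi_\lambda\circ\phi_\lambda\to\mathrm{id}_A$ in point-norm. For $\omega\in(A\otimes_{\min}B)^{\ast}$ nonnegative on $Sep(A\otimes B)$, put $\omega_\lambda:=\omega\circ(\psi_\lambda\phi_\lambda\otimes\mathrm{id}_B)$. Then $\omega_\lambda\to\omega$ pointwise on $A\otimes_{\min}B$. Moreover $\omega_\lambda=\nu_\lambda\circ(\phi_\lambda\otimes\mathrm{id}_B)$ with $\nu_\lambda:=\omega\circ(\psi_\lambda\otimes\mathrm{id}_B)\in(M_{n_\lambda}\otimes B)^{\ast}=(M_{n_\lambda}\otimes B^{\ast\ast})_{\ast}$. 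Hence $\omega_\lambda$ extends to the normal functional $\nu_\lambda\circ(\phi_\lambda^{\ast\ast}\overline{\otimes}\mathrm{id}_{B^{\ast\ast}})$. This extension is nonnegative on $a\otimes b$ for $a\in A^{\ast\ast}_{+}$, $b\in B^{\ast\ast}_{+}$: approximate $b$ by positive elements of $B$ and use $\psi_\lambda(\phi_\lambda^{\ast\ast}(a))\in A_{+}$. By normality it is then nonnegative on all of $Sep_{\sigma}$. Therefore $\omega(x)=\lim_\lambda\omega_\lambda(x)\geqslant0$ for every $x\in Sep_{\sigma}(A^{\ast\ast}\overline{\otimes}B^{\ast\ast})\cap(A\otimes_{\min}B)$, and Hahn--Banach gives $x\in Sep(A\otimes B)$.

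\emph{Case 2: neither algebra is nuclear.} Then neither is subhomogeneous, so both biduals have summands $B(H_\pi)$ of arbitrarily large dimension. Take rank-$n$ projections $p,q$ in such summands and $y=y^{\ast}\in pA^{\ast\ast}p\otimes qB^{\ast\ast}q\cong M_n\otimes M_n$. Compressing $\mathds{1}-y$ by $p\otimes q$ and using $\gamma(M_n,M_n)=1/n$ gives $\gamma^{\sigma}(A^{\ast\ast},B^{\ast\ast})=0$. The inequality $\gamma(A,B)\geqslant\gamma^{\sigma}(A^{\ast\ast},B^{\ast\ast})$ is then trivial.
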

\begin{proof}
Suppose that $\gamma(A,B)=r$, i.e. for any Hermitian element $H \in A\otimes B$ of norm at most $r$ the element $\mathds{1}_{A} \otimes \mathds{1}_{B}- H$ is separable. Let now $K=K^{\ast} \in A^{\ast\ast}\overline{\otimes} B^{\ast\ast}$ be of norm at most $r$. By Kaplansky's density theorem, we can approximate it in the strong operator topology by Hermitian elements $K_{i}$ from $A\otimes B$ of norm at most $r$. By our assumption the elements $\mathds{1}_{A} \otimes \mathds{1}_{B} - K_{i}$ are separable in $A\otimes B$. Since they converge strongly, in particular weak$^{\ast}$, to $\mathds{1}_{A}\otimes \mathds{1}_{B} - K \in A^{\ast\ast}\overline{\otimes} B^{\ast\ast}$, these elements are separable as well. It follows that $\gamma(A^{\ast\ast}, B^{\ast\ast}) \geqslant r = \gamma(A,B)$.

Now assume that $\gamma(A^{\ast\ast}, B^{\ast\ast}) = r$ and let $H=H^{\ast} \in A\otimes B$ be a Hermitian element of norm at most $r$. It follows that the element $\mathds{1}_{A} \otimes \mathds{1}_{B}- H$ is separable in $A^{\ast\ast}\overline{\otimes} B^{\ast\ast}$. Therefore $\mathds{1}_{A}\otimes \mathds{1}_{B} - H$ is a weak$^{\ast}$-limit of finite sums of the form $\sum_{i} a_{i} \otimes b_{i}$, where $0\leqslant a_{i} \in A^{\ast\ast}$ and $0\leqslant b_{i} \in B^{\ast\ast}$. We can use Kaplansky's theorem to approximate each $a_{i}$ and $b_{i}$ by positive elements in $A$ and $B$, respectively. Therefore $\mathds{1}_{A} \otimes \mathds{1}_{B} - H$ can be approximated in the weak$^{\ast}$ topology of $A^{\ast\ast}\overline{\otimes} B^{\ast\ast}$ by finite sums of the form $\sum_{i} P_{i} \otimes Q_{i}$, with $0\leqslant P_i \in A$ and $0 \leqslant Q_i \in B$. But this means that these finite sums converge to $\mathds{1}_{A} \otimes \mathds{1}_{B} - H$ in the weak topology of $A\otimes B$. As we already mentioned, the cone of separable elements is closed and convex, therefore weakly closed, hence we conclude that the elements $\mathds{1}_{A} \otimes \mathds{1}_{B} - H$ are separable. As a consequence $\gamma(A,B) \geqslant r = \gamma(A^{\ast\ast}, B^{\ast\ast})$, which finishes the proof.
\end{proof}
Under extra assumptions, we will now obtain a similar result for $\eta(A,B)$.
\begin{prop}\label{prop:cbnormbidual}
Assume that $B$ is a nuclear $C^{\ast}$-algebra. Then $\eta(A,B) = \eta^{\sigma}(A^{\ast\ast}, B^{\ast\ast})$.    
\end{prop}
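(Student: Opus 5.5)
We prove the two inequalities $\eta(A,B)\leqslant \eta^{\sigma}(A^{\ast\ast},B^{\ast\ast})$ and $\eta^{\sigma}(A^{\ast\ast},B^{\ast\ast})\leqslant \eta(A,B)$ separately. Only the second one uses nuclearity of $B$.

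\emph{First inequality.} Let $\Phi: A\to B$ be a contractive positive map. Its second adjoint $\Phi^{\ast\ast}: A^{\ast\ast}\to B^{\ast\ast}$ is normal and extends $\Phi$. It is positive because positive elements of $A^{\ast\ast}$ are weak$^{\ast}$ limits of bounded nets of positive elements of $A$ (Kaplansky). It is contractive since $\|\Phi^{\ast\ast}\|=\|\Phi\|$. Moreover $M_n(A^{\ast\ast})\simeq M_n(A)^{\ast\ast}$ and $(\Phi^{\ast\ast})_n=(\Phi_n)^{\ast\ast}$, so $\|\Phi^{\ast\ast}\|_{cb}=\|\Phi\|_{cb}$. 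Taking the supremum over $\Phi$ gives $\eta(A,B)\leqslant\eta^{\sigma}(A^{\ast\ast},B^{\ast\ast})$.

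\emph{Second inequality.} Let $\Psi: A^{\ast\ast}\to B^{\ast\ast}$ be normal, contractive and positive. Its restriction $\Psi|_A$ lands in $B^{\ast\ast}$, not in $B$, and this is the main obstacle. We get back into $B$ using nuclearity. Since $B$ is nuclear, the completely positive approximation property gives nets of completely positive contractions $\varphi_i: B\to M_{k(i)}$ and $\psi_i: M_{k(i)}\to B$ with $\psi_i\circ\varphi_i\to Id_B$ point-norm.

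\begin{itemize}
\item The bidual $B^{\ast\ast}$ is injective. We will instead use the following fact: the $\varphi_i$ extend to normal completely positive contractions $\varphi_i^{\ast\ast}: B^{\ast\ast}\to M_{k(i)}$, and $\psi_i\circ\varphi_i^{\ast\ast}\to Id_{B^{\ast\ast}}$ in the point-weak$^{\ast}$ topology. This follows from the point-norm convergence on $B$ by normality and a bounded-net argument.
\item Define $\Phi_i:=\psi_i\circ\varphi_i^{\ast\ast}\circ\Psi|_A: A\to B$. Each $\Phi_i$ is positive and contractive, being a composition of a positive contraction with completely positive contractions. Hence $\|\Phi_i\|_{cb}\leqslant\eta(A,B)$.
\item For fixed $n$ and $x\in M_n(A)$ with $\|x\|\leqslant 1$, the amplifications satisfy $(\Phi_i)_n(x)\to \Psi_n(x)$ weak$^{\ast}$ in $M_n(B^{\ast\ast})$. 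By weak$^{\ast}$ lower semicontinuity of the norm, $\|\Psi_n(x)\|\leqslant\liminf_i\|(\Phi_i)_n(x)\|\leqslant \eta(A,B)$.
\item Hence $\|\Psi|_A\|_{cb}\leqslant\eta(A,B)$. Finally, $\Psi$ is normal and $M_n(A)$ is weak$^{\ast}$ dense in $M_n(A^{\ast\ast})$ with norm control (Kaplansky again). So $\|\Psi\|_{cb}=\|\Psi|_A\|_{cb}$, which gives $\eta^{\sigma}(A^{\ast\ast},B^{\ast\ast})\leqslant\eta(A,B)$.
\end{itemize}

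The step requiring the most care is the point-weak$^{\ast}$ convergence $\psi_i\circ\varphi_i^{\ast\ast}\to Id_{B^{\ast\ast}}$. If this direct route gives trouble, I would fall back on a cleaner alternative: injectivity of $B^{\ast\ast}$ means that $B^{\ast\ast}$ has a weak$^{\ast}$ completely positive approximation through matrix algebras by normal maps (Connes, Effros--Lance). One then pushes the finite-dimensional approximants back into $B$ using the fact that matrix-valued completely positive maps on $B^{\ast\ast}$ can be approximated, in the point-weak$^{\ast}$ sense, by maps coming from $B$ via Arveson extension/local lifting.
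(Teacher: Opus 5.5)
Your first inequality, and the final Kaplansky step identifying $\|\Psi\|_{cb}$ with $\|\Psi|_A\|_{cb}$, are fine and match the paper. So does the overall architecture of the second inequality: compose $\Psi|_A$ with positive contractions $E_i: B^{\ast\ast}\to B$ tending to $Id_{B^{\ast\ast}}$ point-weak$^{\ast}$, then use lower semicontinuity. The gap is in how you produce the $E_i$, and the step you flagged is false as stated.

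Your maps $\psi_i\circ\varphi_i^{\ast\ast}$ are exactly $T_i^{\ast\ast}$ for $T_i=\psi_i\circ\varphi_i$. Point-weak$^{\ast}$ convergence of $T_i^{\ast\ast}$ on $B^{\ast\ast}$ means $T_i^{\ast}f\to f$ in $\sigma(B^{\ast},B^{\ast\ast})$ for each $f\in B^{\ast}$. Point-norm convergence on $B$ only gives convergence in $\sigma(B^{\ast},B)$. Normality of each $T_i^{\ast\ast}$ does not bridge this: a bounded net of normal maps converging on a weak$^{\ast}$-dense subspace need not converge elsewhere, because there is no weak$^{\ast}$ equicontinuity.

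Concretely, take $B=C[0,1]$, $\varphi_m(f)=\mathrm{diag}(f(k/m))_{k=0}^{m}\in M_{m+1}$, and $\psi_m(X)=\sum_k X_{kk}h_k$ with hat functions $h_k$. Then $\psi_m\circ\varphi_m\to Id_B$ point-norm. However, for irrational $t$, the nonzero projection $p\in B^{\ast\ast}$ with $\langle p,\mu\rangle=\mu(\{t\})$ satisfies $\varphi_m^{\ast\ast}(p)=0$ for all $m$. The elements $\Psi(a)$ you feed in generally lie in $B^{\ast\ast}\setminus B$, so the claimed convergence $(\Phi_i)_n(x)\to\Psi_n(x)$ can genuinely fail. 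For $A=\mathbb{C}^2$ and $\Psi(a)=a_1p+a_2(\mathds{1}-p)$ you get $\Phi_m(a)=a_2\mathds{1}$ for every $m$.

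What is missing is the deeper input that nuclearity of $B$ makes $B^{\ast\ast}$ injective. Then $Id_{B^{\ast\ast}}$ is a point-weak$^{\ast}$ limit of normal finite-rank completely positive contractions $v$, and one still has to move their ranges into $B$.

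Your fallback points this way but puts the difficulty in the wrong place. The normal maps $B^{\ast\ast}\to M_n$ need no approximation, since each is the bidual of its restriction to $B$. What must be replaced are the maps $M_n\to B^{\ast\ast}$ (or the inclusions $Im(v)\hookrightarrow B^{\ast\ast}$), by completely positive contractions into $B$. Neither Arveson extension nor a lifting property does this.

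The paper uses local reflexivity of $B$, which follows from nuclearity. It supplies contractive completely positive maps $Im(v)\to B$ converging point-weak$^{\ast}$ to the inclusion, and these are composed with $v$. Alternatively, as in the paper's remark, you can approximate the positive Choi matrix of a map $M_n\to B^{\ast\ast}$ by positive elements of $M_n(B)$ via Kaplansky. On that route you must still keep the resulting maps contractive, e.g. by a renormalization, because your final bound is $\|\Psi_n(x)\|\leqslant\eta(A,B)\limsup_i\|E_i\|$. With such a net, the rest of your argument goes through verbatim.
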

\begin{proof}
Let $\phi: A \to B$ be a positive map. We can always consider the bidual map $\phi^{\ast\ast}: A^{\ast\ast} \to B^{\ast\ast}$, which has the same norm and cb norm as $\phi$. It follows that $\eta(A,B)\leqslant \eta^{\sigma}(A^{\ast\ast}, B^{\ast\ast})$.

To go the other way, the issue is that not all normal maps from $A^{\ast\ast}$ to $B^{\ast\ast}$ arise from maps from $A$ to $B$. Let $\psi: A^{\ast\ast} \to B^{\ast\ast}$ be a normal positive map. We can consider the restriction $\psi_{|A}$, which will have the same norm and cb norm as $\psi$, by using Kaplansky's density theorem. Let us verify it for the norm; the proof for the cb norm will be the same as we can run the same argument for amplifications. Indeed, suppose that for all contractions $x\in A$ we have $\|\psi(x)\|\leqslant C$. For a contraction $x \in A^{\ast\ast}$ we can find a net of contractions $x_i \in A$ converging strongly to $x$, in particular in the weak$^{\ast}$-topology. Therefore $\lim_{i} \psi(x_i) = \psi(x)$ in the weak$^{\ast}$ topology. As we have $\|\psi(x_i)\|\leqslant C$ and the balls are weak$^{\ast}$ closed, it follows that $\|\psi(x)\|\leqslant C$, therefore $\|\psi\| \leqslant \|\psi_{|A|}\|$, and the other inequality is clear.

Now we have the map $\psi_{|A}: A \to B^{\ast\ast}$ and we have to construct maps going into $B$. Here we will use nuclearity. Indeed, nuclearity of $B$ forces $B^{\ast\ast}$ to be injective, therefore we can approximate the identity map by finite rank, normal completely positive maps. We will also use the fact that nuclearity implies local reflexivity (see \cite[Chapter 9: Theorem 3.1]{BrownOzawa}). Recall that a $C^{\ast}$-algebra $B$ is locally reflexive if for any finite dimesional operator subsystem $E \subset B^{\ast\ast}$ there is a net of contractive, completely positive maps $\varphi_{i}: E \to B$ that converges to $Id_{E}$ in the point-weak$^{\ast}$ topology.

Now let $v: B^{\ast\ast} \to B^{\ast\ast}$ be a finite rank, normal completely positive map. Then its image is a finite dimensional self-adjoint subspace $B^{\ast\ast}$, hence an operator system (possibly after adding the unit). By local reflexivity, we can find a contractive, completely positive map from $Im(v)$ to $B$, which approximates the identity map on $Im(v)$ in the point-weak$^{\ast}$ topology. It follows that we can construct a net of finite rank, normal, completely positive contractions $E_{i}: B^{\ast\ast} \to B \subset B^{\ast\ast}$ which converge to the identity map in the point-weak$^{\ast}$ topology. We have $\|E_{i} \circ \psi_{|A}\|_{cb} \leqslant \eta(A,B) \|E_{i}\circ \psi_{|A}\| \leqslant \eta(A,B) \|\psi_{|A}\| = \eta(A,B) \|\psi\|$. For each amplification we can prove that $\|\psi_n\| = \|(\psi_{|A})_{n}\| \leqslant \limsup_{i} \|(E_{i}\circ \psi_{|A})_{n}\|\leqslant \eta(A,B) \|\psi\| $, from which it follows that $\|\psi\|_{cb} \leqslant \eta(A,B) \|\psi\|$, hence $\eta(A^{\ast\ast}, B^{\ast\ast}) \leqslant \eta(A,B)$. 
\end{proof}
\begin{remark}
An alternative argument to finding maps from $B^{\ast\ast}$ into $B$ can be given using Choi matrices. Indeed, injectivity of $B^{\ast\ast}$ is also equivalent to the existence of approximate factorizations $\phi_{\lambda}: B^{\ast\ast} \to M_{n_{\lambda}}$ and $\psi_{\lambda}: M_{n_{\lambda}}\to B^{\ast\ast}$ such that $\psi_{\lambda} \circ \phi_{\lambda}$ converges to $Id_{B^{\ast\ast}}$ in the point-weak${^\ast}$ topology. As the domain of $\psi_{\lambda}$ is a matrix algebra, we can use Lemma \ref{lem:Choimatrix} to view it as an element of $M_{n_{\lambda}}(B^{\ast\ast})$. By Kaplansky's density theorem, this element can be approximated by positive elements in $M_{n_{\lambda}}(B)$, i.e. completely positive maps from $M_{n_{\lambda}}$ into $B$. By composing with $\phi_{\lambda}$, we obtain maps from $B^{\ast\ast}$ to $B \subset B^{\ast\ast}$, which converge to the identity on $B^{\ast\ast}$ in the point-weak$^{\ast}$ topology.
\end{remark}
\subsection{Relation with the rank of subhomogeneous algebras}
The authors of \cite{MusatRordam} noted the relationship between the problems we are studying here and subhomogeneous $C^{\ast}$-algebras; it will turn out that this is exactly the class of $C^{\ast}$-algebras, where something nontrivial happens.
\begin{definition}
Let $A$ be a $C^{\ast}$-algebra. We define $rank(A) \in \mathbb{N} \cup \{\infty\}$ to be the supremum of the set of  $n \in \mathbb{N} \cup \{\infty\}$ such that $A$ has an irreducible representation of dimension $n$. A $C^{\ast}$-algebra is called \emph{subhomogeneous} if $rank(A) < \infty$.
\end{definition}
It is very easy to describe the biduals of subhomogeneous $C^{\ast}$-algebras and this will allow us to use von Neumann algebraic techniques. 
\begin{fact}
A $C^{\ast}$-algebra $A$ is subhomogeneous of rank $n$ if and only if its bidual is of the form $\bigoplus_{k=1}^{r} M_{n_{k}} \otimes L^{\infty}(X_{k})$, where $n_{r} = n$.
\end{fact}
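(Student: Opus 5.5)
The plan is to prove both directions through the standard dictionary between irreducible representations and the type I structure of the bidual. The key tool is the universal property of the bidual: every representation $\pi: A \to B(H)$ extends uniquely to a normal representation $\pi^{\ast\ast}: A^{\ast\ast} \to \pi(A)''$. If $\pi$ is irreducible of dimension $d<\infty$, then $\pi(A)'' = \pi(A) = M_d$ by Burnside's theorem. Hence $\pi^{\ast\ast}$ is a normal surjection onto $M_d$, and its kernel is $(1-z_\pi)A^{\ast\ast}$ for a central projection $z_\pi$. This gives $z_\pi A^{\ast\ast} \cong M_d$. Call such central summands the \emph{finite-dimensional atomic part}.

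For the forward direction, assume $rank(A)=n<\infty$. I would first show that $A^{\ast\ast}$ is type I with all homogeneous summands of degree $\leqslant n$. The route is through the universal representation, which is a direct sum of cyclic representations $\pi_\varphi$ over states $\varphi$, so $A^{\ast\ast} \subseteq \prod_\varphi \pi_\varphi(A)''$ with $A^{\ast\ast}$ being the weak closure. Each irreducible representation has dimension $\leqslant n$, so $A$ satisfies the standard polynomial identity $s_{2n}$ in every irreducible representation. By Amitsur--Levitzki, and because irreducible representations separate points, $s_{2n}$ vanishes on $A$ itself. Since $s_{2n}$ is a weak$^{\ast}$-continuous multilinear identity in each variable, it extends to $A^{\ast\ast}$ by Kaplansky density, applying the density one variable at a time and using separate continuity of multiplication.

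A von Neumann algebra satisfying $s_{2n}$ must be type I with homogeneous summands $M_k\otimes L^\infty(X_k)$, $k\leqslant n$. The reason is that $s_{2n}$ fails in $M_{n+1}$, which embeds unitally in any type II, type III, or type I$_m$ algebra with $m>n$, including the case $m=\infty$. This yields the decomposition $\bigoplus_{k\leqslant n} M_{n_k}\otimes L^\infty(X_k)$ with each $n_k \leqslant n$. To get $n_r=n$, I would take an irreducible representation of dimension exactly $n$. It produces a central summand $z_\pi A^{\ast\ast}\cong M_n$, which must lie inside the $M_n\otimes L^\infty$ block, so that block is nonzero. Summands $k$ that do not occur are simply omitted, and indices are reordered so that $n_k$ increases.

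For the converse, suppose $A^{\ast\ast}\cong\bigoplus_{k=1}^r M_{n_k}\otimes L^\infty(X_k)$ with $n_r=n$. Any irreducible representation $\pi$ of $A$ extends normally to $A^{\ast\ast}$ with image dense in $B(H_\pi)$, and its range is then a quotient of $A^{\ast\ast}$ by a weak$^{\ast}$-closed ideal. Quotients of this form are again of the form $\bigoplus M_{n_k}\otimes L^\infty(Y_k)$, whose irreducible (factor) images are $M_{n_k}$ with $n_k\leqslant n$. Therefore $\dim H_\pi\leqslant n$, which gives $rank(A)\leqslant n$. Equality holds because the $M_n\otimes L^\infty(X_r)$ summand has a normal character on $L^\infty(X_r)$ when $X_r$ has an atom. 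In general I would instead compose $A \to M_n\otimes L^\infty(X_r)$ with evaluation at a character of $L^\infty$ (not necessarily normal); this gives a representation of $A$ onto $M_n$ whenever the image of $A$ is weak$^{\ast}$ dense. I would verify the surjectivity by a density argument: otherwise $A$ would satisfy $s_{2n-2}$, which would then pass to $A^{\ast\ast}$ and contradict the presence of $M_n$.

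The main obstacle is the step showing that a polynomial identity forces the type I structure with bounded degree, in particular excluding type II and type III summands. I expect this to be handled by embedding $M_{n+1}$ via matrix units, which exist in every factor that is not type I$_{\leqslant n}$ and more generally by a halving-projections argument, and the remainder of the argument is bookkeeping.
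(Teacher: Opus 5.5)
Your proof is correct, but it takes a different route from the paper.

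\textbf{The paper's route.} The paper asserts that $A^{\ast\ast}$ has only finite-dimensional irreducible representations, concludes that it is type I, and reads off the decomposition from the structure theory of type I von Neumann algebras, deferring the details to Blackadar and Brown--Ozawa. Two points are left implicit there: why the dimension bound passes from $A$ to all (including non-normal) irreducible representations of $A^{\ast\ast}$, and the converse direction.

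\textbf{Your route.} You encode ``all irreducible representations have dimension $\leqslant n$'' by the standard identity $s_{2n}$.
\begin{enumerate}
\item Amitsur--Levitzki, together with the fact that irreducible representations separate points, gives $s_{2n}$ on $A$.
\item Separate weak$^{\ast}$-continuity of multiplication pushes the identity to $A^{\ast\ast}$. Goldstine density already suffices here; Kaplansky is not needed.
\item The failure of $s_{2n}$ on $M_{n+1}$ excludes type II, type III, I$_\infty$ and I$_m$ ($m>n$) summands all at once.
\item For the converse you use normal extensions of irreducible representations and the identity $s_{2n-2}$.
\end{enumerate}
This makes explicit exactly the transfer step the paper glosses over, at the modest cost of importing two PI-theory facts. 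The paper's route is shorter but rests entirely on the cited structure results.

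\textbf{Two small slips.}
\begin{enumerate}
\item $M_{n+1}$ need not embed unitally into $M_m\otimes L^\infty(X)$; that requires $(n+1)\mid m$. A non-unital embedding suffices, since $s_{2n}$ is multilinear with no constant term.
\item Weak$^{\ast}$ density of $z_rA$ does not make the representation from a given character of $L^\infty(X_r)$ surjective onto $M_n$. For example, take $A=\{f\in C([0,1],M_2): f(0)\ \text{diagonal}\}$. A character of the $L^\infty[0,1]$ part of the top summand lying over $t=0$ maps $A$ onto the diagonal matrices.
\end{enumerate}
Your $s_{2n-2}$ argument shows only that some character works, which is all you need. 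It is simpler to skip characters: if every irreducible representation of $A$ had dimension at most $n-1$, then $A$, and hence $A^{\ast\ast}$, would satisfy $s_{2n-2}$, contradicting $M_n\subseteq M_n\otimes L^\infty(X_r)\subseteq A^{\ast\ast}$.
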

\begin{proof}
This follows from the fact that if the bidual $A^{\ast\ast}$ has only finite dimensional irreducible representations, it has to be of type I, therefore the decomposition follows from general properties of type I von Neumann algebras. See also \cite[Proposition IV.1.4.6]{Blackadar} or \cite[Chapter 2: Proposition 7.7]{BrownOzawa}.
\end{proof}

To relate subhomogeneous $C^{\ast}$-algebras to matrix algebras, we will use results by Smith (see \cite[Lemma 2.7]{Smithcb}). Alternatively, one can use Glimm's lemma, as it was done in \cite[Lemma 3.9]{MusatRordam}).
\begin{lemma}[{\cite[Lemma 2.7]{Smithcb}}]\label{lem:smithfactorization}
Let $A$ be a $C^{\ast}$-algebra admitting an irreducible representation of dimension at least $n$. Then for any $\varepsilon > 0$ there are unital completely positive maps $\rho: M_n \to A$ to and $\sigma: A \to M_n$ such that $\|\sigma \circ \rho - Id\|_{cb} < \varepsilon$
\end{lemma}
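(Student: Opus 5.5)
We need to prove Smith's Lemma: if $A$ has an irreducible representation $\pi: A \to B(H)$ with $\dim H \geqslant n$, then for every $\varepsilon>0$ there are unital completely positive maps $\rho: M_n \to A$ and $\sigma: A \to M_n$ with $\|\sigma\circ\rho - Id\|_{cb}<\varepsilon$.

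\textbf{Step 1: the map $\sigma$.} Pick an $n$-dimensional subspace $K\subset H$ with orthogonal projection $P$, and identify $B(K)\simeq M_n$. Define $\sigma(a):=P\pi(a)P$. This is a compression of a representation, hence unital and completely positive.

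\textbf{Step 2: lifting matrix units.} We construct $\rho$ by approximately lifting the matrix units of $B(K)$ through $\pi$. Irreducibility enters via Kadison's transitivity theorem: $\pi(A)$ acts transitively on finite-dimensional subspaces. Concretely, for any finite set of vectors and any operator $T$ on their span, some $a\in A$ satisfies $\pi(a)\xi=T\xi$ on that span, and $a$ can be taken self-adjoint or positive when $T$ is.

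\textbf{Step 3: Choi-matrix construction.} By Lemma \ref{lem:Choimatrix}, a unital c.p. map $M_n\to A$ is the same as a positive element $X=[x_{ij}]\in M_n(A)$ with $\sum_i x_{ii}=1$ after normalisation. A cleaner route is to realise $\rho(e_{ij})=v_i^{\ast}v_j$ with $v_i\in A$ and $\sum_i v_i^{\ast}v_i=1$. This is c.p. and unital, since $\rho(T)=V^{\ast}(T\otimes 1)V$ with $V=(v_1,\dots,v_n)^{T}$ a column isometry.

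We want $P\pi(v_i^{\ast}v_j)P\approx e_{ij}$. Fix an orthonormal basis $\xi_1,\dots,\xi_n$ of $K$ and a unit vector $\zeta$. By transitivity, choose $w_i\in A$ with $\pi(w_i)\xi_k=\delta_{ik}\zeta$ for all $k$. Then $P\pi(w_i^{\ast}w_j)P=e_{ij}$ exactly.

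\textbf{Step 4: normalisation (main obstacle).} The elements $w_i$ need not satisfy $\sum_i w_i^{\ast}w_i=1$; in general $s:=\sum_i w_i^{\ast}w_i$ only satisfies $P\pi(s)P=1_K$.

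The first fix is to cut down the $w_i$ so that $\|s\|\leqslant 1+\delta$. Use Kadison transitivity in its norm-controlled form: the operator $T$ sending $\xi_k\mapsto\delta_{ik}\zeta$ has norm $1$, and one can take $\|\pi(w)\|$ close to $\|T\|$. Applying this to the column operator $\sum_i e_{i1}\otimes w_i$ in $M_n(A)$, which is irreducible via $\mathrm{id}\otimes\pi$, we may take $\|s\|\leqslant 1+\delta$.

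Then define
\[
\rho(T):=\tfrac{1}{1+\delta}\,V^{\ast}(T\otimes1)V+\varphi(T)\Bigl(1-\tfrac{s}{1+\delta}\Bigr),
\]
where $\varphi$ is a state on $M_n$. This map is unital and c.p., and
\[
\sigma\circ\rho(T)=\tfrac{1}{1+\delta}T+\varphi(T)\tfrac{\delta}{1+\delta}1_K .
\]
Hence $\|\sigma\circ\rho-Id\|_{cb}\leqslant 2\delta/(1+\delta)$, which is less than $\varepsilon$ for small $\delta$.

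The norm-controlled transitivity step for the column is the delicate point. It is where irreducibility of $\mathrm{id}\otimes\pi$ restricted to rectangular pieces must be justified; if that fails, I would fall back on Glimm's lemma as in \cite[Lemma 3.9]{MusatRordam}.
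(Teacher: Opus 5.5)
Your argument is correct, but it takes a different route from the paper. The paper does not prove this lemma at all: it imports it from Smith's Lemma 2.7, and offers Glimm's lemma (as used by Musat--R{\o}rdam) as an alternative. You give a self-contained proof instead. Your $\sigma$ is a compression of the irreducible representation, and $\rho(T)=W^{\ast}(T\otimes 1)W+\varphi(T)(1-W^{\ast}W)$ is an explicit Stinespring-type formula built from a column $W$ that you obtain from Kadison's transitivity theorem. Citing keeps the paper short. Your route needs only transitivity, and it actually yields slightly more than the statement, as explained below.

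The step you flag as delicate is not a gap, and you do not need any irreducibility of ``rectangular pieces''.
\begin{itemize}
\item The representation $\mathrm{id}_n\otimes\pi$ of $M_n(A)$ on $\mathbb{C}^n\otimes H$ is irreducible, because its commutant is $1_n\otimes\pi(A)'=\mathbb{C}1$.
\item Let $T$ be the partial isometry on $\mathbb{C}^n\otimes H$ with $T(e_1\otimes\xi_k)=e_k\otimes\zeta$ for $k=1,\dots,n$, and $T=0$ on the orthogonal complement of these vectors.
\item Norm-controlled transitivity gives $X\in M_n(A)$ that agrees with $T$ on these vectors and satisfies $\|X\|^2\leqslant 1+\delta$.
\item Put $W:=X(e_{11}\otimes 1_A)=\sum_i e_{i1}\otimes w_i$. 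Then $\pi(w_i)\xi_k=\delta_{ik}\zeta$, and $W^{\ast}W=e_{11}\otimes s$, so $\|s\|=\|W\|^2\leqslant 1+\delta$.
\end{itemize}

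One point needs care. The bound must hold for $\|X\|$ in $M_n(A)$, not only for $\|(\mathrm{id}_n\otimes\pi)(X)\|$ as your phrase about $\|\pi(w)\|$ suggests. You need $1-s/(1+\delta)\geqslant 0$ in $A$ itself. This is harmless, since elements of the quotient $M_n(\pi(A))\cong M_n(A)/M_n(\ker\pi)$ lift with the same norm.

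The sharp form of Kadison's theorem gives $\|X\|\leqslant\|T\|=1$. So you may take $\delta=0$, which gives $s\leqslant 1$, and your formula then yields $\sigma\circ\rho=\mathrm{id}_{M_n}$ exactly. That is stronger than the approximate version stated in the lemma.

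A minor notational fix: in Step 4 your $V$ should be the column of the $w_i$, not of the earlier $v_i$.
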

\begin{corollary}\label{cor:etaestimate}
We have $\eta(A,B) \geqslant \eta(A, M_{n})$ for any $n \leqslant rank(B)$.    
\end{corollary}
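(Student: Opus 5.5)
Since $n \leqslant rank(B)$, the algebra $B$ has an irreducible representation of dimension at least $n$, so Lemma \ref{lem:smithfactorization} applies. For each $\varepsilon>0$ it gives unital completely positive maps $\rho: M_n \to B$ and $\sigma: B \to M_n$ with $\|\sigma\circ\rho - Id_{M_n}\|_{cb} < \varepsilon$. The plan is to push an arbitrary contractive positive map $\Phi: A \to M_n$ into $B$ via $\rho$, and then recover its cb norm via $\sigma$.

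Fix a contractive positive map $\Phi: A \to M_n$ and set $\Psi := \rho\circ\Phi: A \to B$. Since $\rho$ is completely positive it is positive, so $\Psi$ is positive. Since $\rho$ is unital completely positive, $\|\rho\| = \|\rho\|_{cb} = \|\rho(\mathds{1})\| = 1$, hence $\|\Psi\| \leqslant 1$. Thus $\Psi$ is a contractive positive map and $\|\Psi\|_{cb} \leqslant \eta(A,B)$.

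To compare cb norms, write $\Phi = \sigma\circ\Psi + (Id - \sigma\circ\rho)\circ\Phi$. Using $\|\sigma\|_{cb} = 1$ we get
\[
\|\Phi\|_{cb} \leqslant \|\sigma\|_{cb}\|\Psi\|_{cb} + \|Id-\sigma\circ\rho\|_{cb}\|\Phi\|_{cb} \leqslant \eta(A,B) + \varepsilon\|\Phi\|_{cb}.
\]
We may bound $\|\Phi\|_{cb}$ here by $n\|\Phi\| \leqslant n$, since the cb norm of a map into $M_n$ is attained at the $n$-th level, or alternatively by the same fact used in Theorem \ref{thm:gammaandetacomputation}. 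This gives $\|\Phi\|_{cb} \leqslant \eta(A,B) + n\varepsilon$. Letting $\varepsilon \to 0$ and then taking the supremum over $\Phi$ yields $\eta(A,M_n) \leqslant \eta(A,B)$.

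The only subtle point is ensuring that the error term is finite and uniform, which the a priori bound $\|\Phi\|_{cb}\leqslant n$ handles. Everything else uses only that ucp maps have cb norm one and preserve positivity.
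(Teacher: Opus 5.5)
Your proposal is correct and follows the same route as the paper, which only says that Lemma \ref{lem:smithfactorization} provides unital completely positive, almost completely isometric embeddings $\rho: M_n \to B$ and that composing with them yields the estimate. You have supplied the details the paper omits: the splitting $\Phi = \sigma\circ\rho\circ\Phi + (Id-\sigma\circ\rho)\circ\Phi$, and the a priori bound $\|\Phi\|_{cb}\leqslant n$ that is needed to absorb the error term.
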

\begin{proof}
By the previous lemma for any $n \in \mathbb{N}$ we can find unital completely positive, almost completely isometric embeddings of $M_n$ into $B$, from which the result follows.
\end{proof}

We will first extend the results about separable neighbourhoods to tensor products with commutative algebras.
\begin{lemma}\label{lem:separable}
Let $T=T^{\ast} \in (M_n \otimes L^{\infty}(X))\otimes M_d$ satisfy $\|T\|\leqslant \frac{1}{min(d,n)}$. Then $(1_{n} \otimes \mathds{1}_{X})\otimes 1_{d} - T$ is a separable element.

Consequently, for any C$^*$-algebra $B$ and any positive contraction $\Phi: M_n \otimes L^{\infty}(X)\rightarrow B$ we have  $||\Phi||_{cb}\leq n$. In particular, it holds for unital positive maps.

Moreover, all the above assertions hold if we replace $L^{\infty}(X)$ by $C(X)$.
\end{lemma}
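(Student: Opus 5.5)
The plan is to establish the separability statement first and then derive the cb-norm bound from it by the block-matrix argument of Proposition \ref{prop:onesidedsepvscb}.

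\textbf{Separability in the $L^\infty$ case.} Identify $(M_n\otimes L^{\infty}(X))\otimes M_d$ with $L^{\infty}(X; M_n\otimes M_d)$, so that $T$ is an essentially bounded function $t\mapsto T(t)$ with $T(t)=T(t)^{\ast}$ and $\|T(t)\|\leqslant \frac{1}{\min(n,d)}$ almost everywhere. By Theorem \ref{thm:gammaandetacomputation}, each $\mathds{1}_n\otimes\mathds{1}_d - T(t)$ lies in $Sep(M_n\otimes M_d)$.

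To globalize this pointwise statement, approximate $T$ in norm by simple functions $T_k=\sum_j \chi_{E_j}\otimes T_j$, where $\{E_j\}$ is a finite measurable partition of $X$. Each $T_j$ is self-adjoint, and I will arrange $\|T_j\|\leqslant \frac{1}{\min(n,d)}$, which can be done by rescaling slightly. Then
\[
\mathds{1}-T_k=\sum_j \chi_{E_j}\otimes(\mathds{1}_n\otimes\mathds{1}_d-T_j).
\]
Writing each $\mathds{1}_n\otimes\mathds{1}_d-T_j=\sum_i P_{ij}\otimes Q_{ij}$ with $P_{ij},Q_{ij}\geqslant 0$ gives
\[
\mathds{1}-T_k=\sum_{i,j}(\chi_{E_j}\otimes P_{ij})\otimes Q_{ij}.
\]
Each $\chi_{E_j}\otimes P_{ij}$ is positive in $M_n\otimes L^{\infty}(X)$, so every $\mathds{1}-T_k$ is separable. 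Since $Sep$ is closed, the norm limit $\mathds{1}-T$ is separable.

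\textbf{The $C(X)$ case.} The same argument works after replacing the simple functions by a finite partition of unity $\{f_j\}$ subordinate to a fine open cover, using $T_k=\sum_j f_j\otimes T(x_j)$. Uniform continuity of $T$ on the compact space $X$ gives norm convergence $T_k\to T$. Each $T(x_j)$ satisfies the norm bound, and $\mathds{1}-T_k=\sum_j f_j\otimes(\mathds{1}-T(x_j))$ because $\sum_j f_j=1$.

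\textbf{The cb-norm bound.} Let $\Phi$ be a positive contraction and let $x\in M_k(M_n\otimes L^{\infty}(X))$ with $\|x\|\leqslant 1$. Form the self-adjoint element
\[
\tilde y=\begin{bmatrix}0 & -x/n\\ -x^{\ast}/n & 0\end{bmatrix}\in (M_n\otimes L^\infty(X))\otimes M_{2k}.
\]
Its norm is at most $\frac1n\leqslant\frac{1}{\min(n,2k)}$, so by the first part $y=\mathds{1}-\tilde y$ is separable.

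Positive maps applied to one leg preserve separable elements, since they send elementary tensors of positives to elementary tensors of positives and are continuous. Hence $(\Phi\otimes Id_{2k})(y)\geqslant 0$. Because $\Phi(\mathds{1})\leqslant\mathds{1}$, this gives
\[
\begin{bmatrix}\mathds{1} & \Phi_k(x)/n\\ \Phi_k(x)^{\ast}/n & \mathds{1}\end{bmatrix}\geqslant 0,
\]
so $\|\Phi_k(x)\|\leqslant n$. As $k$ was arbitrary, $\|\Phi\|_{cb}\leqslant n$. Unital positive maps are contractive, so the bound covers them too.

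\textbf{Expected obstacle.} No Horodecki criterion is needed on the $B$ side, so that part is routine. The only delicate point is the approximation step. Truncating $T$ to simple functions must preserve both self-adjointness and the norm bound, which averaging over the $E_j$ or scaling by $(1-\varepsilon)$ handles. One must also confirm that separability in $M_n\otimes L^\infty(X)\otimes M_d$ is taken in the norm-closed sense, and the argument above indeed produces norm limits.
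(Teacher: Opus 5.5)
Your proof is correct and follows essentially the same route as the paper. Both get pointwise separability from the matrix case (the paper cites Ando's Corollary 8.4 directly, which is what underlies Theorem \ref{thm:gammaandetacomputation}), globalize via simple functions or partitions of unity using closedness of the separable cone, and then run the $2\times 2$ block-matrix argument of Proposition \ref{prop:onesidedsepvscb} with $d=2k$; you are in fact more explicit than the paper about keeping the approximating values self-adjoint and within the norm bound.
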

\begin{proof}
We interpret the tensor product $(M_n \otimes L^{\infty}(X))\otimes M_d$ as $M_{n}\otimes M_{d}$-valued measurable functions on $X$. By \cite[Corrolary 8.4]{Ando} the function corresponding to the element $(1_{n} \otimes \mathds{1}_{X})\otimes 1_{d} - T$ takes values in separable elements pointwise almost everywhere. Let $f$ be such a function. Then we can approximate it by simple functions, i.e. finite valued ones. Since the cone of separable elements is closed and convex, the values of these simple functions  themselves are separable elements, so can be approximated by finite sums, from which it follows that $f$ can be approximated by an expression of the form $\sum_{i} \mathds{1}_{A_{i}} \otimes P_{ij} \otimes Q_{ij}$, where the $P_{ij}$ and the $Q_{ij}$ are positive. This expression is manifestly separable; it follows that $f$ itself is separable. The case for $C(X)$ follows similarly and we see the algebra $(M_n \otimes C(X))\otimes M_d$ as $M_{n}\otimes M_{d}$-valued continuous functions on $X$. Ando's result gives us a pointwise separable function $f: X\rightarrow M_{n}\otimes M_{d}$ corresponding to the element $(1_{n} \otimes \mathds{1}_{X})\otimes 1_{d} - T$. But as the set of separable elements is closed, we get that $f$ itself is separable; in this case we use partitions of unity rather than approximations by simple functions.

The proof of the second assertion follows similarly to Proposition \ref{prop:onesidedsepvscb}. Indeed, let $r \in \mathbb{N}$ and let $x \in (M_{n} \otimes L^{\infty}(X))\otimes M_{r}$ be of norm one. We can form the matrix 
\[
y:= \left[\begin{array}{cc} (\mathds{1}_{n} \otimes \mathds{1}_{X})\otimes \mathds{1}_{r} & \frac{1}{n} x \\ \frac{1}{n}x^{\ast} & (\mathds{1}_{n} \otimes \mathds{1}_{X})\otimes \mathds{1}_{r}\end{array}\right] \in (M_{n} \otimes L^{\infty}(X))\otimes M_{2r},
\]whose distance from the unit in this algebra is not greater than $\frac{1}{n}$. It follows that $y$ is separable, so we can finish the proof exactly as in the proof of Proposition \ref{prop:onesidedsepvscb}. The case for $C(X)$ follows similarly.
\end{proof}
\begin{corollary}
Let $A$ be a subhomogeneous $C^{\ast}$-algebra. Then $\gamma(A, M_{d}) \geqslant \frac{1}{rank(A)}$.
\end{corollary}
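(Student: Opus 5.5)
The plan is to transfer the question to biduals, where the structure of subhomogeneous algebras is explicit, and then apply Lemma \ref{lem:separable} one summand at a time. By Proposition \ref{prop:separablebidual} we have $\gamma(A, M_d) = \gamma^{\sigma}(A^{\ast\ast}, M_d)$, since $M_d^{\ast\ast} = M_d$. Let $n := rank(A)$. By the Fact, $A^{\ast\ast} \cong \bigoplus_{k=1}^{r} M_{n_k} \otimes L^{\infty}(X_k)$ with $n_k \leqslant n$. It therefore suffices to prove the following: every self-adjoint $T \in A^{\ast\ast}\overline{\otimes} M_d$ with $\|T\| \leqslant \frac{1}{n}$ makes $\mathds{1}\otimes \mathds{1}_d - T$ lie in $Sep_{\sigma}(A^{\ast\ast}\overline{\otimes} M_d)$.

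To see this, write $z_k$ for the central projections of the summands. Then $T = \sum_k (z_k\otimes 1) T$, and each component $T_k \in (M_{n_k}\otimes L^{\infty}(X_k))\otimes M_d$ is self-adjoint with $\|T_k\| \leqslant \|T\| \leqslant \frac{1}{n} \leqslant \frac{1}{\min(n_k, d)}$. Lemma \ref{lem:separable} then shows that $(\mathds{1}_{n_k}\otimes \mathds{1}_{X_k})\otimes 1_d - T_k$ is separable in the $k$-th summand. Lemma \ref{lem:separablesplit} reassembles the components, which are orthogonal, into a separable element of the direct sum.

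One point needs care. The sum $\bigoplus_k$ is finite by the Fact, so $r<\infty$ and no weak$^{\ast}$ limit over summands is needed. Even if it were infinite, Lemma \ref{lem:separablesplit} covers it because $Sep_\sigma$ is weak$^{\ast}$ closed. We should also check that Lemma \ref{lem:separable}, stated for $L^\infty(X)$, gives separability in the weak$^{\ast}$ sense appropriate here. Its proof approximates by simple functions in norm, so the conclusion is even norm separability and in particular holds for $Sep_\sigma$.

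Taking the supremum over $T$ yields $\gamma^{\sigma}(A^{\ast\ast}, M_d) \geqslant \frac{1}{n}$, and Proposition \ref{prop:separablebidual} gives $\gamma(A,M_d) \geqslant \frac{1}{rank(A)}$. I expect the main obstacle to be only bookkeeping: confirming that the bidual identification in Proposition \ref{prop:separablebidual} applies with $B = M_d$, and that the norm bound passes to the summands. Both follow directly from the results already established.
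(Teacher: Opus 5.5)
Your proof is correct and follows the paper's argument. Like the paper, you pass to $\gamma^{\sigma}(A^{\ast\ast}, M_d)$ via Proposition \ref{prop:separablebidual}, split $A^{\ast\ast}$ into summands $M_{n_k}\otimes L^{\infty}(X_k)$, apply Lemma \ref{lem:separable} to each summand using the bound $\|T_k\|\leqslant \frac{1}{n}\leqslant \frac{1}{\min(n_k,d)}$ (a step the paper leaves implicit), and reassemble with Lemma \ref{lem:separablesplit}.
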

\begin{proof}
We first use Proposition \ref{prop:separablebidual} to see that $\gamma(A, M_d) = \gamma^{\sigma}(A^{\ast\ast}, M_d)$. As $A$ is subhomogeneous, $A^{\ast\ast}$ splits as a direct sum $\sum_{k=1}^{r} M_{n_{k}} \otimes L^{\infty}(X_k)$, where $n_{r} = rank(A)$. Since separable elements in direct sums are just direct sums of separable elements, we can focus on a single summand and then the result follows from the previous lemma.
\end{proof}


We are now able to prove our first estimate on $\eta(A,B)$.
\begin{lemma}
We have $\sup_{B} \eta(A,B) = rank(A)$.
\end{lemma}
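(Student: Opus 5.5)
We prove the two inequalities $\sup_B \eta(A,B) \geqslant rank(A)$ and $\sup_B \eta(A,B)\leqslant rank(A)$ separately. The case $rank(A)=\infty$ is included, in which case only the lower bound is needed.

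\emph{Lower bound.} Fix $n\leqslant rank(A)$ with $n$ finite. We would like to use Lemma \ref{lem:smithfactorization} on the domain side. It gives, for $\varepsilon>0$, unital completely positive maps $\rho: M_n\to A$ and $\sigma: A\to M_n$ with $\|\sigma\circ\rho - Id\|_{cb}<\varepsilon$. Take $B=M_n$ and $\Phi := T\circ\sigma: A\to M_n$, where $T$ is the transpose. Then $\Phi$ is unital and positive, hence contractive. Moreover
\[
\|\Phi\|_{cb}\geqslant \|T\circ\sigma\circ\rho\|_{cb}\geqslant \|T\|_{cb}-\|T\circ(\sigma\circ\rho - Id)\|_{cb}\geqslant n - n\varepsilon,
\]
where the first inequality uses $\|\rho\|_{cb}=1$, and the last uses $\|T\|_{cb}=n$ and $\|T\|_{cb}\leqslant n$ in the error term. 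Letting $\varepsilon\to 0$ gives $\eta(A,M_n)\geqslant n$. Taking the supremum over $n\leqslant rank(A)$ gives $\sup_B\eta(A,B)\geqslant rank(A)$, including the infinite case.

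\emph{Upper bound.} Assume $rank(A)=n<\infty$. Let $\Phi: A\to B$ be a positive contraction for an arbitrary $C^{\ast}$-algebra $B$; we must show $\|\Phi\|_{cb}\leqslant n$. We pass to biduals. The map $\Phi^{\ast\ast}: A^{\ast\ast}\to B^{\ast\ast}$ is normal, positive and contractive, and it has the same cb norm as $\Phi$. This step does not need the nuclearity assumption of Proposition \ref{prop:cbnormbidual}, because we only need the trivial inequality $\|\Phi\|_{cb}\leqslant\|\Phi^{\ast\ast}\|_{cb}$.

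By the Fact on subhomogeneous algebras, $A^{\ast\ast}\cong\bigoplus_{k=1}^{r} M_{n_k}\otimes L^\infty(X_k)$ with $n_k\leqslant n$. Let $z_k$ be the central projections. Then $\Phi^{\ast\ast}=\sum_k\Phi_k$, where $\Phi_k(a):=\Phi^{\ast\ast}(z_k a)$ is a positive contraction on $M_{n_k}\otimes L^\infty(X_k)$. Lemma \ref{lem:separable} gives $\|\Phi_k\|_{cb}\leqslant n_k$, but summing these bounds loses a factor $r$, so this crude approach is not enough.

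The main obstacle is therefore handling the direct sum. The plan is to bypass the summands and rerun the separability argument of Lemma \ref{lem:separable} on all of $A^{\ast\ast}$ at once.

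Take $x\in A^{\ast\ast}\otimes M_m$ with $\|x\|\leqslant 1$, and form
\[
y=\left[\begin{array}{cc}\mathds{1}&\frac1n x\\ \frac1n x^{\ast}&\mathds{1}\end{array}\right]\in A^{\ast\ast}\otimes M_{2m}.
\]
Its distance from the unit is at most $1/n\leqslant 1/n_k$. By the Corollary following Lemma \ref{lem:separable}, combined with Lemma \ref{lem:separablesplit} on each summand, $y$ is separable in $A^{\ast\ast}\overline{\otimes} M_{2m}$. So $y$ is a weak$^{\ast}$ limit of sums $\sum_i a_i\otimes q_i$ with $a_i\geqslant 0$ and $q_i\geqslant 0$. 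Applying the normal positive map $\Phi^{\ast\ast}\otimes Id_{2m}$ sends each such sum to $\sum_i\Phi^{\ast\ast}(a_i)\otimes q_i\geqslant 0$. Since the positive cone is weak$^{\ast}$ closed, $(\Phi^{\ast\ast}\otimes Id)(y)\geqslant 0$.

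Using $\Phi^{\ast\ast}(\mathds{1})\leqslant\mathds{1}$, we get
\[
\left[\begin{array}{cc}\mathds{1}&\frac1n\Phi^{\ast\ast}_m(x)\\ \frac1n\Phi^{\ast\ast}_m(x)^{\ast}&\mathds{1}\end{array}\right]\geqslant 0 .
\]
This is exactly the condition $\|\Phi^{\ast\ast}_m(x)\|\leqslant n$. Since $m$ and $x$ were arbitrary, $\|\Phi\|_{cb}\leqslant n$.
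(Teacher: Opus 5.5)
Your proof is correct, but it departs from the paper's in two places.

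For the lower bound, the paper stays with biduals. It uses $\eta(A,M_d)=\eta^{\sigma}(A^{\ast\ast},M_d)$ and the fact that $A^{\ast\ast}$ has a matrix (or $B(\ell^2)$) direct summand of size $rank(A)$, on which the transpose sits exactly, so no approximation is needed. You instead work at the $C^{\ast}$-level with Smith's factorization (Lemma \ref{lem:smithfactorization}) and the explicit unital positive map $T\circ\sigma$. This costs an $\varepsilon$ and a less elementary lemma. In exchange it avoids Proposition \ref{prop:cbnormbidual} and handles $rank(A)=\infty$ uniformly. By contrast, the paper's ``$M_{rank(A)}$ as a direct summand'' must be read as ``$M_k$ for arbitrarily large $k$'' when all irreducible representations are finite dimensional.

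For the upper bound you use the same mechanism as the paper: separability of the $2\times 2$ block matrix within $1/n$ of the unit, then positivity after applying $\Phi^{\ast\ast}\otimes Id$. The difference is that you apply it to all of $A^{\ast\ast}$ at once. This is more careful than the paper's write-up, which gets the cb bound $n$ for a single summand $M_n\otimes L^{\infty}(X)$ from Lemma \ref{lem:separable} and then just asserts it for $A^{\ast\ast}$. As you note, per-summand cb bounds only add up to $\sum_k n_k$. The per-summand positivity statements (with diagonal blocks $\Phi^{\ast\ast}(z_k)$) do add up correctly, and that is exactly what your global argument exploits.

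Incidentally, you do not need the bidual for your upper bound. For $x\in A\otimes M_m$, the Corollary after Lemma \ref{lem:separable} already gives $y\in Sep(A\otimes M_{2m})$, and $\Phi\otimes Id_{2m}$ is norm continuous.
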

\begin{proof}
Note first that it suffices to show that $\sup_{B} \eta(A,B) \geqslant rank(A)$ snd $\eta^{\sigma}(A^{\ast\ast}, B^{\ast\ast}) \leqslant rank(A)$. To this end it will be enough to take the supremum over matrix algebras. Indeed, we have $\eta(A, M_d) = \eta^{\sigma}(A^{\ast\ast}, M_d)$. Since $A^{\ast\ast}$ contains $M_{rank(A)}$ as a direct summand (where by $M_{\infty}$ we mean $B(\ell^2)$), we can use the transpose map to see that $\eta(A, M_d) = \eta^{\sigma}(A^{\ast\ast}, M_d) \geqslant \min(rank(A), d)$. Letting $d$ go to infinity proves the first part of the claim.

For the second part, if $rank(A)=\infty$, then nothing to prove. Otherwise we use Lemma \ref{lem:separable}. Indeed, it follows from the lemma that any positive contraction from $  M_n \otimes L^{\infty}(X)$ into any $C^{\ast}$-algebra has cb norm bounded by $n$. If $n=rank(A)<\infty$, then  $A^{\ast\ast}$ contains a summand of $M_n \otimes L^{\infty}(X)$, so for any positive map $\phi: A^{\ast\ast} \to B^{\ast\ast}$ we have $||\Phi||_{cb}\leq n$ . This is enough to conclude that $\eta(A,B) \leqslant \eta^{\sigma}(A^{\ast\ast},B^{\ast\ast})\leqslant rank(A)$.
\end{proof}

\begin{prop}
We have $\eta(A,B) = \min(rank(A), rank(B))$.
\end{prop}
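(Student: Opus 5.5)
We prove the two inequalities $\eta(A,B)\leqslant \min(rank(A),rank(B))$ and $\eta(A,B)\geqslant \min(rank(A),rank(B))$ separately. The bound $\eta(A,B)\leqslant rank(A)$ is already contained in the previous lemma. So for the upper bound it remains to show $\eta(A,B)\leqslant rank(B)$, and we may assume $m:=rank(B)<\infty$.

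For this, let $\Phi: A\to B$ be a positive contraction. We pass to the bidual map $\Phi^{\ast\ast}: A^{\ast\ast}\to B^{\ast\ast}$, which has the same norm and cb norm; this direction of Proposition \ref{prop:cbnormbidual} needs no nuclearity. By the Fact, $B^{\ast\ast}=\bigoplus_{k} M_{m_k}\otimes L^{\infty}(X_k)$ with $m_k\leqslant m$. The cb norm of a map into a finite direct sum is the maximum of the cb norms of its components. Hence it suffices to show that any bounded map $\Psi: E\to M_{m_k}\otimes L^{\infty}(X)$ satisfies $\|\Psi\|_{cb}\leqslant m_k\|\Psi\|$.

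Identify $M_{m_k}\otimes L^{\infty}(X)$ with $M_{m_k}(L^{\infty}(X))\subset M_{m_k}(C(\Omega))$, where $\Omega$ is the Gelfand spectrum. The norm of an element of $M_r(M_{m_k}\otimes C(\Omega))$ is the supremum over $\omega\in\Omega$ of the norms of its evaluations. Each evaluation $\mathrm{ev}_\omega\circ\Psi: E\to M_{m_k}$ is a map into $M_{m_k}$, so its cb norm is attained at the $m_k$-th matrix level, exactly as in the proof of Theorem \ref{thm:gammaandetacomputation}. On the other hand, any contraction into $M_{m_k}$ has $\|\Psi_{m_k}\|\leqslant m_k\|\Psi\|$: an $m_k\times m_k$ matrix is a sum of $m_k$ "diagonal-shift" pieces, each of norm at most $\|\Psi\|$. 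This elementary estimate, which comes from the same operator-space exercise, is what we intend to use. Taking the supremum over $\omega$ yields $\|\Phi\|_{cb}\leqslant m$. The main point to verify is that this pointwise argument is legitimate, namely that the $m_k$-level bound holds uniformly in $\omega$. Since the constant does not depend on $\omega$, this should be routine.

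For the lower bound, set $n=\min(rank(A),rank(B))$, allowing $n=\infty$, in which case we work with every finite $n'$. Take $n'\leqslant n$ finite. By Corollary \ref{cor:etaestimate}, $\eta(A,B)\geqslant \eta(A,M_{n'})$, since $n'\leqslant rank(B)$. We then need $\eta(A,M_{n'})\geqslant n'$. To get this, use Lemma \ref{lem:smithfactorization} on $A$, which applies since $rank(A)\geqslant n'$, to obtain a ucp map $\sigma_A: A\to M_{n'}$ and a ucp map $\rho_A: M_{n'}\to A$ with $\sigma_A\circ\rho_A\approx Id$ in cb norm. Consider the positive unital map $t\circ\sigma_A: A\to M_{n'}$, where $t$ is the transpose. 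Its cb norm is at least $\|t\circ\sigma_A\circ\rho_A\|_{cb}/\|\rho_A\|_{cb}\geqslant (n'-\varepsilon')$, because $\|t\|_{cb}=n'$ and $\|\rho_A\|_{cb}=1$. Letting $\varepsilon\to0$ gives $\eta(A,B)\geqslant n'$, and taking the supremum over $n'$ finishes the proof, including the case where both ranks are infinite.
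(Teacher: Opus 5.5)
Your proof is correct and follows the paper's structure: $\eta(A,B)\leqslant rank(A)$ comes from the preceding lemma, $\eta(A,B)\leqslant rank(B)$ comes from reducing via point evaluations to maps into $M_m$ (you use the bidual decomposition from the Fact where the paper embeds $B$ into $M_{rank(B)}\otimes C(K)$), and the lower bound comes from Corollary \ref{cor:etaestimate} together with the transpose. The one real variation is that you get $\eta(A,M_{n'})\geqslant n'$ by composing the transpose with the compression $\sigma_A$ of Lemma \ref{lem:smithfactorization} directly on $A$, rather than passing through $\eta^{\sigma}(A^{\ast\ast},M_{n'})$ and a matrix direct summand of $A^{\ast\ast}$; this avoids Proposition \ref{prop:cbnormbidual} and the bidual bookkeeping altogether.
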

\begin{proof}
From the previous lemma we have $\eta(A,B) \leqslant rank(A)$ and $\eta(A,B) \leqslant rank(B)$ is well-known, but we will sketch the proof. The statement is vacuous if $rank(B) = \infty$ and if $B$ is subhomogeneous, \cite[Theorem 2.1]{MR723470} shows that all bounded maps into $B$ are completely bounded. As $B$ embeds into $\widetilde{B}:=M_{rank(B)} \otimes C(K)$, it suffices to bound the cb norm of a positive contraction into $\widetilde{B}$. But $\widetilde{B}$ consists of matrix-valued functions, so by composing with point evaluations, we may reduce to the case $M_{rank(B)}$. It was proved in \cite[Theorem 2.10]{Smithcb} that in this case $\|\phi\|_{cb} = \|\phi_{rank(B)}\| \leqslant rank(B) \|\phi\|$; the last estimate is well-known, see \cite[Exercise 3.10, Exercise 3.11]{paulsen-book}. It folows that $\eta(A,B) \leqslant \min(rank(A), rank(B))$.

To get the other estimate note that by Corollary \ref{cor:etaestimate} we have $\eta(A,B) \geqslant \eta(A, M_n)$ for any natural number $n\leqslant rank(B)$. As $M_n$ is nuclear, we have $\eta(A, M_n) = \eta^{\sigma}(A^{\ast\ast}, M_n)$. We can now use the transpose map to conclude that $\eta(A,B) \geqslant \min(rank(A), n)$ for any natural number $n\leqslant rank(B)$, i.e. $\eta(A,B) \geqslant \min(rank(A), rank(B))$.

\end{proof}

Now we can state and prove the main result. 
\begin{theorem}\label{thm: neighbrhood-norm relation}
We have $\eta(A,B) = \min(rank(A), rank(B)) = \frac{1}{\gamma(A,B)}$.
\end{theorem}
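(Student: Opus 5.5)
The first equality $\eta(A,B)=\min(rank(A),rank(B))$ is the preceding proposition. What remains is to show $\gamma(A,B)=1/\min(rank(A),rank(B))$, with the convention $1/\infty=0$. By symmetry of $\gamma$ (the flip $A\otimes B\simeq B\otimes A$ preserves separable elements), we may assume $rank(A)\leqslant rank(B)$ and write $n:=rank(A)$. The plan is to prove the two inequalities separately, passing to biduals via Proposition \ref{prop:separablebidual}.

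\textbf{Lower bound $\gamma(A,B)\geqslant 1/n$ when $n<\infty$.} By Proposition \ref{prop:separablebidual}, $\gamma(A,B)=\gamma^{\sigma}(A^{\ast\ast},B^{\ast\ast})$, and by the Fact, $A^{\ast\ast}=\bigoplus_k M_{n_k}\otimes L^\infty(X_k)$ with $n_k\leqslant n$. By Lemma \ref{lem:separablesplit} it suffices to treat one summand $M_{n_k}\otimes L^\infty(X_k)\overline{\otimes} B^{\ast\ast}$.

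\begin{itemize}
\item If $B^{\ast\ast}$ is a finite direct sum of algebras $M_{m}\otimes L^\infty(Y)$ (the case $rank(B)<\infty$), the argument of Lemma \ref{lem:separable} applies. We view the element as an $M_{n_k}\otimes M_m$-valued function on $X_k\times Y$ and apply Ando's bound pointwise with radius $1/\min(n_k,m)\geqslant 1/n$.
\item For general $B$ we cannot decompose $B^{\ast\ast}$ this way. Instead we view $M_{n_k}\otimes L^\infty(X_k)\overline{\otimes}B^{\ast\ast}$ as $M_{n_k}\otimes N$ with $N:=L^\infty(X_k)\overline{\otimes}B^{\ast\ast}$, and use the Horodecki criterion of Lemma \ref{lem:findimhorodecki}, with the roles of the legs arranged so that the matrix algebra is the injective leg. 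Let $x=x^\ast$ with $\|x\|\leqslant 1/n$, and let $\phi:N\to M_{n_k}$ be normal positive. We need $(Id\otimes\phi)(\mathds{1}-x)\geqslant 0$. Replacing $\phi$ by a unital perturbation exactly as in the lemma on unital maps, we may take $\phi$ unital.
\item Then $\|\phi\|_{cb}\leqslant n_k\leqslant n$, since maps into $M_{n_k}$ have cb norm at most $n_k$ times their norm (the estimate quoted in Theorem \ref{thm:gammaandetacomputation}). Hence $\|(Id\otimes\phi)(x)\|\leqslant 1$, so $(Id\otimes\phi)(\mathds{1}-x)=\mathds{1}-(Id\otimes\phi)(x)\geqslant0$. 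This gives separability of the summand.
\end{itemize}

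\textbf{Upper bound $\gamma(A,B)\leqslant 1/\min(rank(A),rank(B))$, including the case where both ranks are infinite.} The plan is to repeat the implication (i)$\Rightarrow$(ii) of Proposition \ref{prop:onesidedsepvscb} with $\phi:A\to M_m$ a positive contraction, for every $m\leqslant rank(B)$. Suppose $\gamma(A,B)\geqslant r>0$. Using Lemma \ref{lem:smithfactorization}, take ucp maps $\rho:M_m\to B$ and $\sigma:B\to M_m$ with $\sigma\circ\rho\approx Id$.

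\begin{itemize}
\item Given $x\in A\otimes M_k$ with $\|x\|\leqslant 1$, form the $2\times2$ block element $y$ with unit diagonal and off-diagonal entries $\pm rx$, as in Proposition \ref{prop:onesidedsepvscb}. This is $\mathds{1}-\tilde y$ with $\|\tilde y\|\leqslant r$.
\item Push $y$ into $A\otimes B$ via the ucp map $Id_A\otimes\rho$ applied on the $M_{2k}$ leg (taking $m\geqslant 2k$ as needed). Then $\mathds{1}-(Id\otimes\rho)(\tilde y)$ is separable in $A\otimes B$, because $Id\otimes\rho$ is unital and contractive.
\item Apply $Id\otimes\sigma$ (completely positive, so it preserves separability) to obtain a separable element of $A\otimes M_{m}$ close to $y$.
\item Then apply $\phi\otimes Id$, which is positive on separable elements. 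Exactly as before this yields $\|\phi_{k}\|\lesssim 1/r$ up to $\varepsilon$.
\end{itemize}

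Hence $\eta(A,M_m)\leqslant 1/r$ for all $m\leqslant rank(B)$ with $m$ finite. By the transpose-map argument in the proof of the preceding lemma, $\eta(A,M_m)\geqslant\min(rank(A),m)$. Thus $\min(rank(A),rank(B))\leqslant 1/r$, giving the upper bound. If both ranks are infinite this forces $r=0$, i.e.\ $\gamma(A,B)=0$.

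The step I expect to be most delicate is the lower bound for general (possibly non-nuclear, infinite-rank) $B$. The point is to check that Lemma \ref{lem:findimhorodecki} really applies with $N=L^\infty(X_k)\overline{\otimes}B^{\ast\ast}$ and yields separability in the $M_{n_k}\otimes L^\infty\otimes B^{\ast\ast}$ sense with the correct grouping of legs. It may be necessary first to split the commutative factor off pointwise, as in Lemma \ref{lem:separable}, so that the separable decomposition lands in $A^{\ast\ast}$ against $B^{\ast\ast}$ rather than $M_{n_k}$ against $N$.
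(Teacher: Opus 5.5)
\textbf{The main gap is in your upper bound when $rank(B)<\infty$.} The $2\times 2$ dilation places the test element in $A\otimes M_{2k}$. To transport it into $A\otimes B$ and back you therefore need Smith's ucp maps at level $2k$, i.e.\ $2k\leqslant rank(B)$; this is what ``taking $m\geqslant 2k$'' encodes. For $\phi:A\to M_m$, however, $\|\phi\|_{cb}=\|\phi_m\|$. Bounding only $\|\phi_k\|$ for $k\leqslant m/2$ gives no sharp control of $\|\phi_m\|$: the transpose on $M_m$ has $\|T_k\|=k$.

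So ``$\eta(A,M_m)\leqslant 1/r$ for all $m\leqslant rank(B)$'' does not follow. You only get it for $2m\leqslant rank(B)$, hence $r\cdot\min(rank(A),\lfloor rank(B)/2\rfloor)\leqslant 1$. That settles the case $rank(B)=\infty$, including the case where both ranks are infinite. Otherwise it loses up to a factor $2$, e.g.\ for $A=B=M_n$.

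The remedy is a self-adjoint witness, which needs no dilation. Let $n=\min(rank(A),rank(B))<\infty$. The flip $F=\sum_{i,j}e_{ij}\otimes e_{ji}\in M_n\otimes M_n$ is self-adjoint of norm one, and $(T\otimes Id)(F)$ is $n$ times the projection $P$ onto the maximally entangled vector. Take Smith's maps for both algebras at level $n$. If $\mathds{1}-r(\rho_A\otimes\rho_B)(F)$ is separable, apply $(T\circ\sigma_A)\otimes\sigma_B$. This yields a positive element within $2nr\varepsilon$ of $\mathds{1}-rnP$, forcing $rn(1-2\varepsilon)\leqslant 1$. Alternatively, do as the paper does: compress $A^{\ast\ast}\overline{\otimes}B^{\ast\ast}$ by the central projections onto the summands $M_{rank(A)}$ and $M_{rank(B)}$ (this preserves separability), then invoke Theorem~\ref{thm:gammaandetacomputation}.

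\textbf{Your lower bound takes a different and more elementary route than the paper's.} The paper applies the nuclear Horodecki criterion (Corollary~\ref{cor:horodeckicstar}) to unital positive $\varphi:B\to A$ and uses $\|\varphi\|_{cb}\leqslant\eta(B,A)$. You need only Lemma~\ref{lem:findimhorodecki} and the bound $\|\phi\|_{cb}\leqslant n_k\|\phi\|$ for maps into $M_{n_k}$. But the regrouping step you flagged must actually be proved.

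It is true. Every positive $Q\in L^{\infty}(X_k)\overline{\otimes}B^{\ast\ast}$ is a weak$^{\ast}$-limit of sums $\sum_j f_j\otimes b_j$ with $f_j,b_j\geqslant 0$. You can see this by averaging over finite measurable partitions, or by applying Theorem~\ref{thm:Horodecki} with the abelian leg, since positive maps into an abelian algebra are completely positive. Hence each $P\otimes Q$ with $P\in (M_{n_k})_+$ lies in $Sep_{\sigma}((M_{n_k}\otimes L^{\infty}(X_k))\overline{\otimes}B^{\ast\ast})$, and this cone is weak$^{\ast}$-closed. This is the same identification the paper uses in its section on the Musat--R{\o}rdam question.

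Also, in the unital reduction take the perturbing state to be normal, so that $\phi_{\varepsilon}$ stays normal.
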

\begin{proof}
We know from the previous proposition that $\eta(A,B) = \min(rank(A), rank(B))$, so we only need to address the case of $\gamma(A,B)$. If neither $A$ nor $B$ is subhomogeneous, we can use $\gamma(A,B) = \gamma^{\sigma}(A^{\ast\ast}, B^{\ast\ast})$ and the fact that both $A^{\ast\ast}$ and $B^{\ast\ast}$ contain a direct summand isomorphic to $B(\ell^2)$ to conclude that $\gamma(A,B) = 0$; it follows from Theorem \ref{thm:gammaandetacomputation}.

Now we can assume that either $A$ or $B$ is subhomogeneous. As $\gamma(A,B)$ is symmetric in the variables, we can assume that $A$ is subhomogeneous without loss of generality. We will show that $\gamma(A,B) = \frac{1}{\min(rank(A), rank(B))}$.

Let $x=x^{\ast} \in A \otimes B$ be of norm at most $\frac{1}{\min(rank(A), rank(B))}$. We will show that $\mathds{1}_{A} \otimes \mathds{1}_{B} - x$ is separable. As $A$ is nuclear, we can use the Horodeckis criterion (Corollary \ref{cor:horodeckicstar}) to show that $\mathds{1}_{A} \otimes \mathds{1}_{B} - x$ is separable if and only if it remains positive after applying any unital positive map from $B$ to $A$. So we look at $\mathds{1}_{A} \otimes \mathds{1}_{B} - (Id \otimes \varphi)(x)$. As $\eta(A,B) = \min(rank(A), rank(B))$, we have $\|(Id \otimes \varphi)(x)\| \leqslant \min(rank(A),rank(B)) \|x\| = 1$, so $\mathds{1}_{A} \otimes \mathds{1}_{B} - (Id \otimes \varphi)(x) \geqslant 0$. It follows that $\mathds{1}_{A} \otimes \mathds{1}_{B} - x$ is separable, so $\gamma(A,B) \geqslant \frac{1}{\min(rank(A), rank(B))}$. To prove the other inequality, note that $\gamma(A,B) = \gamma^{\sigma}(A^{\ast\ast},B^{\ast\ast})$ and the biduals contain direct summand isomorphic to $M_{rank(A)}$ and $M_{rank(B)}$, respectively, so we can use the results for matrices (Theorem \ref{thm:gammaandetacomputation}) to conclude that $\gamma(A,B) \leqslant \frac{1}{\min(rank(A), rank(B))}$
\end{proof}

\begin{corollary}
    For any C$^*$-algebra $A$, and for any $n\in \mathbf{N}$, we have that the identity element $1_n\otimes 1_A$ in the  C$^*$-algebra $M_n(A)\cong M_n\otimes A$ has a separable neighbourhood of radius $1/n$.

    Moreover, a C$^{\ast}$-algebra $A$ is subhomogeneous if and only if for any $C^{\ast}$-algebra $B$ there is a non-trivial ball around $\mathds{1}_{A}\otimes \mathds{1}_{B}$ consisting of separable elements. In fact, it suffices to take one $C^{\ast}$-algebra $B$ of infinite rank such as the unitization of the compact operators or the UHF algebra $M_{2^{\infty}}$.
\end{corollary}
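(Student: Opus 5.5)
The corollary follows from Theorem \ref{thm: neighbrhood-norm relation} together with the symmetry of $\gamma$ and the elementary facts about ranks. We treat its three assertions in turn.

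\textbf{First assertion.} Take the pair $(M_n, A)$. We have $rank(M_n)=n$, so $M_n$ is subhomogeneous. Theorem \ref{thm: neighbrhood-norm relation} therefore gives
\[
\gamma(M_n,A)=\frac{1}{\min(n,rank(A))}\geqslant \frac{1}{n}.
\]
Since $\gamma$ is a supremum, we must also check that radius exactly $1/n$ is attained. For this we use the direct argument in the proof of that theorem: for $x=x^{\ast}$ with $\|x\|\leqslant 1/n$, the Horodeckis criterion (Corollary \ref{cor:horodeckicstar}, with $M_n$ nuclear) reduces separability to positivity after applying unital positive maps $\varphi: A\to M_n$. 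The bound $\eta(M_n,A)\leqslant n$ (or simply $\|\varphi\|_{cb}\leqslant n$ for maps into $M_n$) gives $\|(Id\otimes\varphi)(x)\|\leqslant 1$. Hence the closed ball of radius $1/n$ consists of separable elements. Under the flip $M_n\otimes A\cong A\otimes M_n$ separable elements are preserved, so the orientation of the tensor product does not matter.

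\textbf{Second assertion, forward direction.} Assume $A$ is subhomogeneous. By the theorem, for every $B$ we get $\gamma(A,B)=1/\min(rank(A),rank(B))\geqslant 1/rank(A)>0$. This is a non-trivial ball, and by the argument above it is even closed.

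\textbf{Second assertion, converse.} It suffices to exhibit a single infinite-rank $B$, since this gives the stronger final claim. Suppose $A$ is not subhomogeneous and $B$ has infinite rank. Then neither algebra is subhomogeneous, and the first case of the theorem's proof gives $\gamma(A,B)=0$. Hence no ball of positive radius around the identity is separable. This uses that both biduals contain $B(\ell^2)$ summands, or alternatively the $M_n$ summands for all $n$ combined with Theorem \ref{thm:gammaandetacomputation}.

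\textbf{Infinite rank of the two examples.} For $\widetilde{K}$, the identity representation on $\ell^2$ is irreducible and infinite dimensional. For $M_{2^\infty}$, the algebra is simple and infinite dimensional, so it has no finite-dimensional representations at all, and any irreducible representation is infinite dimensional.

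\textbf{Main obstacle.} The only delicate point is the bookkeeping between ``radius $1/n$'' and the supremum $\gamma$, which is handled by the closed-ball argument above. The rest is direct invocation of the theorem.
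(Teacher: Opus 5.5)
Your proposal is correct and is essentially the paper's proof: all three assertions are read off from $\gamma(A,B)=1/\min(rank(A),rank(B))$ in Theorem \ref{thm: neighbrhood-norm relation}, and both directions of the subhomogeneity equivalence come from that same formula. Your additions just make explicit what the paper leaves implicit. These are the closed ball of radius $1/n$ (which also follows at once from closedness of the separable cone), the $M_n$-summand alternative when $A$ has infinite rank but no infinite-dimensional irreducible representation, and the rank checks for $\widetilde{K}$ and $M_{2^\infty}$. One harmless slip: your $\eta(M_n,A)$ should read $\eta(A,M_n)$, but both equal $\min(n,rank(A))$.
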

\begin{proof}
    Follows easily from the theorem as $M_n$ has rank n.

    For the second part, note that $\gamma(A,B) = \frac{1}{\min(rank(A), rank(B))}$ implies that if $rank(A) = \infty$ then we can easily find a $C^{\ast}$-algebra $B$ such that $\gamma(A,B) = 0$. On the other hand, if $rank(A)<\infty$ then $\gamma(A,B) \geqslant \frac{1}{rank(A)}$.
\end{proof}

\section{A question of Musat-R{\o}rdam in \cite{MusatRordam}}
In this section we answer a question that Musat-R{\o}rdam asked in their recent article on entanglement theory in C$^*$ algebras.

Let $\kappa(A,B) = \sup\{\|\rho\|_{max}: \rho \in S(A) \otimes^{\ast} S(B)\}$, where $S(A) \otimes^{\ast} S(B)$ is the set of functionals $\rho$ on the algebraic tensor product $A\odot B$ such that $\rho(\mathds{1}_{A}\otimes \mathds{1}_{B}) = 1$ and $\rho$ is positive on separable elements. We would like to show the following
\begin{prop}
We have $\kappa(A,B) = \min(rank(A), rank(B))$.
\end{prop}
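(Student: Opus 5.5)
We will relate $\kappa(A,B)$ to $\gamma(A,B)$ by duality, and then invoke Theorem \ref{thm: neighbrhood-norm relation}. Here $\|\rho\|_{max}$ is the norm of $\rho$ as a functional on $A\otimes_{\max} B$, possibly infinite. Since $\rho$ is positive on separable elements and normalized, $\rho$ is hermitian, and its norm is governed by its values on self-adjoint elements of norm at most one.

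\textbf{Upper bound.} Set $r:=\gamma(A,B)$ and assume $r>0$, so that at least one of $A,B$ is subhomogeneous. Let $x=x^{\ast}\in A\odot B$ with $\|x\|_{\max}\leqslant 1$. Then $\pm r x$ has norm at most $r$, so both $\mathds{1}\otimes\mathds{1}\mp r x$ are separable. The definition of $\gamma$ uses the minimal norm, so we must check that the max and min norms agree here. This holds because one algebra is subhomogeneous, hence nuclear, and so $A\otimes_{\max}B=A\otimes_{\min}B$.

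Positivity of $\rho$ on these two elements gives $|\rho(x)|\leqslant 1/r$ for hermitian $x$ in the unit ball. For general $x$ we cannot simply split into real and imaginary parts, since that costs a factor of $2$. Instead we use the $2\times 2$ trick from Proposition \ref{prop:onesidedsepvscb}. Consider
\[
\left[\begin{array}{cc} 0 & x \\ x^{\ast} & 0\end{array}\right]
\]
in $M_2(A)\otimes B$. Alternatively, we can use that the hermitian functional $\rho$ extends to the C$^\ast$-algebra $A\otimes B$. For a hermitian functional on a C$^\ast$-algebra, the norm equals the supremum over self-adjoint contractions. Indeed, choose $\lambda\in\mathbb{T}$ with $\lambda\rho(x)=|\rho(x)|$. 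Then $|\rho(x)|=\rho(\mathrm{Re}(\lambda x))$, because $\rho$ is hermitian and $\rho(\mathrm{Im}(\lambda x))$ is real, hence zero. So $\|\rho\|\leqslant 1/\gamma(A,B)=\min(rank(A),rank(B))$.

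If both ranks are infinite, the bound is vacuous. We must also handle the case where $\rho$ is unbounded on the algebraic tensor product and may not extend at all. In that case the claim $\kappa=\infty$ is consistent with the statement.

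\textbf{Lower bound.} We reduce to matrices and use the transpose. For $n\leqslant\min(rank(A),rank(B))$ finite, Lemma \ref{lem:smithfactorization} gives ucp maps $\sigma_A:A\to M_n$, $\sigma_B:B\to M_n$ that are almost inverted by ucp $\rho_A,\rho_B$. Define
\[
\rho:=\tfrac1n\,\langle \xi,(T\circ\sigma_A\otimes\sigma_B)(\cdot)\,\xi\rangle ,
\]
where $\xi$ is the maximally entangled unit vector and $T$ is the transpose, so that $\rho=\omega_\xi\circ(T\sigma_A\otimes\sigma_B)/n$ up to normalization. Since $T\sigma_A$ is positive, $\rho$ is positive on separable elements, and $\rho(\mathds{1}\otimes\mathds{1})=1/n$ after fixing the normalization correctly; we use the vector state $\omega_\xi$ itself.

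Evaluate this functional on $(\rho_A\otimes\rho_B)(F)$, where $F$ is the flip unitary in $M_n\otimes M_n$. The result is approximately $\omega_\xi((T\otimes\mathrm{id})F)=\omega_\xi(n P_\xi)=n$, while the norm of the argument is at most $1$. Hence $\kappa\geqslant n-\varepsilon$. When both ranks are infinite, letting $n\to\infty$ gives $\kappa=\infty$.

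\textbf{Main obstacle.} The key step is the upper-bound duality: making sure the max norm is the relevant norm, and ensuring $\rho$ is bounded so that the hermitian-norm argument applies. The latter follows since $|\rho(x)|\leqslant \|x\|/\gamma$ on hermitian elements of the algebraic tensor product.
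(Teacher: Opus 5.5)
Your argument is correct and takes a different route from the paper in both directions.

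\textbf{Upper bound.} The paper passes to $A^{\ast\ast}=\bigoplus_k M_{n_k}\otimes L^\infty(X_k)$. On each summand it identifies a normalized functional that is positive on separable elements with a positive map $\phi$ into $M_d$, factors it as a state composed with $Id\otimes\phi$ for a unital positive $\phi$ (following Musat--R{\o}rdam's Proposition 3.4), and concludes with the codomain estimate $\|\phi\|_{cb}\leqslant d$. You instead read Theorem~\ref{thm: neighbrhood-norm relation} dually. The separable ball of radius $\gamma=1/\min(rank(A),rank(B))$ around $\mathds{1}\otimes\mathds{1}$ forces $|\rho(x)|\leqslant 1/\gamma$ on self-adjoint contractions, and the rotation $\lambda\rho(x)=|\rho(x)|$ removes the factor $2$ directly on $A\odot B$. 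You should drop the $2\times 2$ remark, since $\rho$ is not defined on $M_2(A)\odot B$. Your route is shorter and avoids the bidual splitting, the bookkeeping of the normalization across summands, and the factorization through $M_d\otimes M_d$. The price is that it rests on the full main theorem (Horodecki criterion, Ando's bound), whereas the paper's route needs only the elementary estimate for maps into $M_d$.

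\textbf{Lower bound.} The paper builds the unital positive map $\rho_A\circ T\circ\sigma_B: B\to A$, estimates its cb norm, argues that this norm is almost attained at a self-adjoint element, and then picks a state on $A\otimes A$. You pull back the standard witness $\omega_\xi\circ(T\otimes\mathrm{id})$ along $\sigma_A\otimes\sigma_B$ and test it on $(\rho_A\otimes\rho_B)(F)$, using $(T\otimes\mathrm{id})(F)=nP_\xi$. This is more explicit, has an error of order $n\varepsilon$, and gives $\kappa=\infty$ when both ranks are infinite without citing Musat--R{\o}rdam's Proposition 3.14(iii). Two small points: $\omega_\xi\circ(T\sigma_A\otimes\sigma_B)$ is already unital, and $\|(\rho_A\otimes\rho_B)(F)\|_{\max}\leqslant 1$ because $\rho_A\otimes\rho_B$ is ucp into $A\otimes_{\max}B$.

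\textbf{A step to make explicit.} You use $\rho(\mathds{1}\otimes\mathds{1}\mp\gamma x)\geqslant 0$, but $\mathds{1}\otimes\mathds{1}\mp\gamma x$ is separable only as a norm limit of finite sums $\sum_i a_i\otimes b_i$ with $a_i,b_i\geqslant 0$.
\begin{itemize}
\item If ``positive on separable elements'' means nonnegative on $Sep(A\otimes B)\cap(A\odot B)$, as the paper's wording suggests, this is fine and your boundedness claim is not circular.
\item If $\rho$ is only assumed nonnegative on finite sums of positive simple tensors, your closing sentence is circular. You cannot pass to the limit before knowing $\rho$ is continuous.
\end{itemize}

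\textbf{Repair for the second reading.}
\begin{enumerate}
\item The map $b\mapsto\rho(\,\cdot\otimes b)$ is a positive map $B\to A^{\ast}$. Hence $\rho$ extends to $A^{\ast\ast}\odot B$ and is nonnegative on $A^{\ast\ast}_{+}\odot B_{+}$.
\item On a summand $M_n\otimes L^\infty(X)$, the element $x$ is a bounded function with values in a fixed finite-dimensional space $M_n\otimes E$, where $E\subset B$.
\item Approximate $x$ uniformly by simple functions. For each value $v$, use separability of $\mathds{1}\otimes\mathds{1}-\gamma v$ in $M_n\otimes B$.
\item The norm-small errors each have one tensor leg in a fixed finite-dimensional space, so they expand in a fixed basis with small coefficients. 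Absorb them via
\[
\|h\|\|e\|\,\mathds{1}\otimes\mathds{1}+h\otimes e=\tfrac12\big[(\|h\|\mathds{1}+h)\otimes(\|e\|\mathds{1}+e)+(\|h\|\mathds{1}-h)\otimes(\|e\|\mathds{1}-e)\big].
\]
\item This shows that $(1+\varepsilon)\mathds{1}\otimes\mathds{1}\mp\gamma x$ is a finite sum of positive simple tensors, which gives the bound without assuming continuity.
\end{enumerate}
The paper's own proof tacitly makes the same boundedness assumption when it regards $\rho$ as a normal functional on $A^{\ast\ast}\overline{\otimes}B^{\ast\ast}$.
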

\begin{proof}
By \cite[Proposition 3.14 (iii)]{MusatRordam} this quantity is infinite if neither $A$ nor $B$ is subhomogeneous.

So we may assume that one of them is subhomogeneous. It was already noted in \cite[Proposition 3.4, Theorem 3.15]{MusatRordam} that the parameter $\kappa(A,B)$ is related to $\eta(A,B)$, at least in the case of matrix algebras. We will show that it is still the case in our setup and we will conclude by our results on $\eta(A,B)$.

Without loss of generality, we can assume that $A$ is subhomogeneous, because $\kappa(A,B) = \kappa(B,A)$. Note that any functional on $A \otimes B$ can be viewed as a weak$^{\ast}$-continuous functional on the bidual $A^{\ast\ast}\overline{\otimes} B^{\ast\ast}$. As $A^{\ast\ast}$ is a direct sum of matrix algebras tensored with commutative algebras and separable elements split with respect to direct sums (Lemma \ref{lem:separablesplit}), we may replace $A^{\ast\ast}$ by $L^{\infty}(X) \otimes M_{d}$, where $d\leqslant rank(A)$. As $L^{\infty}(X)$ is commutative, it is easy to see that separable elements in $(L^{\infty}(X)\otimes M_d)\overline{\otimes} B^{\ast\ast}$ are the same as the ones in $M_d \otimes (L^{\infty}(X)\overline{\otimes} B^{\ast\ast})$, as in both cases they can be viewed as functions on $X$ valued in separable elements in $M_{d} \otimes B^{\ast\ast}$. In this case we have a one-to-one correspondence between functionals  $\widehat{\phi}$ positive on separable elements and positive maps $\phi$ from $L^{\infty}(X)\overline{\otimes} B^{\ast\ast}$ to $M_d$ given by $\widehat{\phi}(\sum_{i} a_{i} \otimes b_{i}) = \tr(\sum_{i} a_{i}^{T} \phi(b_i))$. Moreover, unitality of $\widehat{\phi}$ translates to $\tr(\phi(\mathds{1})) = 1$. 

We can recover $\widehat{\phi}$ using the formula $\widehat{\phi}(x\otimes y) = \tr((\sum_{i,j} e_{ij} \otimes e_{ij})(x \otimes \phi(y)))$. Now we can proceed just like in \cite[Proposition 3.4]{MusatRordam}, to show that any $\Phi \in S(M_d) \otimes^{\ast} S(L^{\infty}(X)\overline{\otimes}B^{\ast\ast})$ is of the form $\Phi = \rho \circ (Id \otimes \phi)$, where $\phi$ is a unital positive map from $L^{\infty}(X)\overline{\otimes} B^{\ast\ast} \to M_d$ and $\rho$ is a state on $M_d \otimes M_d$. It follows that $\|\Phi\| \leqslant \|\phi\|_{cb}\leqslant d\leqslant rank(A)$.

Now we prove the other inequality, namely $\kappa(A,B) \geqslant rank(A)$. By the already mentioned symmetry, we can assume that $rank(A) \leqslant rank(B)$, so $rank(A) = \min(rank(A), rank(B))$. It suffices for each $\varepsilon > 0$ to find a unital positive map $\phi:B \to A$ and a state $\rho$ on $A\otimes B$ such that $\|\rho(Id \otimes \phi)\| \geqslant  rank(A) - \varepsilon$. By Lemma \ref{lem:smithfactorization} for each $\varepsilon > 0$ there are unital completely positive maps $\rho_{A}: M_{rank(A)} \to A$ and $\sigma_{A}: A \to M_{rank(A)}$ such that $\|\sigma_A\circ \rho_A - Id\|_{cb}\leqslant \varepsilon$. As $rank(B) \geqslant rank(A)$, we can use the same argument to find unital completely positive maps $\rho_{B}: M_{rank(A)} \to B$ and $\sigma_{B}: B \to M_{rank(A)}$ such that $\|\sigma_{B}\circ \rho_{B} - Id\|_{cb}\leqslant \varepsilon$. Note that it follows that $\rho_{A}$ and $\rho_B$ are almost completely isometric. We define $\phi:= \rho_{A}\circ T \circ \sigma_{B}$, where $T: M_{rank(A)} \to M_{rank(A)}$ is the transpose map. We have 
\[
\|\phi\|_{cb} \geqslant \|\sigma_{A} \circ \phi \circ \rho_{B}\|_{cb} = \|\sigma_{A} \circ \rho_{A}\circ T \circ \sigma_{B}\circ \rho_B\|_{cb}.
\]
As the compositions $\sigma_{A} \circ \rho_{A}$ and $\sigma_{B} \circ \rho_{B}$ are close to the identity, we find that $\|\phi\|_{cb} \geqslant rank(A)(1-\varepsilon)^2$, because $\|T\|_{cb} = rank(A)$. It remains to show that the cb norm of $\phi$ can be almost attained at a self-adjoint element, because then the norm can be computed using states. It is known that the cb norm of the transpose is attained at a self-adjoint element $x \in M_{rank(A)}\otimes M_{rank(A)}$ of norm $1$. Consider $y:= (\rho_{A} \otimes \rho_B)(x)$, then $\|y\|\simeq 1$, because $\rho_A$ and $\rho_{B}$ are almost completely isometric. Note that
\[
\|(Id_{A} \otimes T)(Id_{A}\otimes \sigma_B)(y)\| \geqslant \|(Id_{A} \otimes T)(\sigma_{A}\otimes \sigma_{B})(y)\|.
\]
By definition of $y$ we obtain $\sigma_{A}\otimes \sigma_{B}(y) = (\sigma_{A}\circ \rho_{A}\otimes \sigma_{B}\circ \rho_{B})(x) \simeq x$, thus $\|(Id_{A} \otimes T)(Id_{A}\otimes \sigma_B)(y)\| \simeq rank(A)$. It follows that $\|(Id_{A} \otimes \phi)(y)\| \simeq rank(A)$ and $y$ is self-adjoint. Therefore we can find a state $\rho: A \otimes A \to \mathbb{C}$ such that $|\rho(Id \otimes \phi)(y)| \simeq rank(A)$, from which it follows that $\kappa(A,B) \geqslant rank(A) = \min(rank(A), rank(B))$. 
\end{proof}

\section{Acknowledgments}
We would like to thank the organizers of the workshop "Noncommutative Harmonic Analysis and Quantum Information" that took place in January 2026 at Oberwolfach, where the question about the separable neighbourhood of identity in C$^*$-algebras was proposed. 
This work was partially supported by the Wallenberg Centre for Quantum Technology 
(WACQT) funded by Knut and Alice Wallenberg Foundation (KAW). MW was partially supported by the National Science Center, Poland (NCN) grant no. 2021/43/D/ST1/01446.

\newcommand{\etalchar}[1]{$^{#1}$}

\end{document}